\DeclareMathOperator{\cc}{\mathsf{c}}
\DeclareMathOperator{\Spec}{\mathsf{Spec}}
\DeclareMathOperator{\pr}{\mathsf{pr}}
\DeclareMathOperator{\id}{\mathsf{id}}
\DeclareMathOperator{\sgn}{\mathsf{sgn}}
\DeclareMathOperator{\Hom}{\mathsf{Hom}}
\DeclareMathOperator{\sHom}{\mathcal{H}\textit{om}}
\DeclareMathOperator{\Ext}{\mathsf{Ext}}
\DeclareMathOperator{\ext}{\mathsf{ext}}
\DeclareMathOperator{\Coh}{\mathsf{Coh}}
\DeclareMathOperator{\sEnd}{\mathcal{E}\textit{nd}}
\DeclareMathOperator{\Ho}{\mathsf H}
\DeclareMathOperator{\rank}{\mathsf{rank}}
\DeclareMathOperator{\triv}{\mathsf{triv}}
\DeclareMathOperator{\im}{\mathsf{im}}
\DeclareMathOperator{\K}{\mathsf K}
\DeclareMathOperator{\tr}{tr}
\DeclareMathOperator{\Ind}{\mathsf{Ind}}
\let\deg\relax
\DeclareMathOperator{\deg}{\mathsf{deg}}
\let\coker\relax
\DeclareMathOperator{\coker}{\mathsf{coker}}
\let\det\relax
\DeclareMathOperator{\det}{\mathsf{det}}
\newcommand{\D}{{\mathsf D}}
\DeclareMathOperator{\Aut}{\mathsf{Aut}}
\DeclareMathOperator{\Pic}{\mathsf{Pic}}
\newcommand{\cU}{{\mathcal U}}
\newcommand{\cW}{{\mathcal W}}
\newcommand{\IC}{\mathbb{C}}
\newcommand{\IE}{\mathbb{E}}
\newcommand{\IN}{\mathbb{N}}
\newcommand{\IP}{\mathbb{P}}
\newcommand{\IZ}{\mathbb{Z}}
\DeclareMathOperator{\Res}{\mathsf{Res}}
\DeclareMathOperator{\VB}{\mathsf{VB}}
\DeclareMathOperator{\FM}{\mathsf{FM}}
\DeclareMathOperator{\TT}{\mathsf{T}\!}
\DeclareMathOperator{\CC}{\mathsf{C}}
\DeclareMathOperator{\WW}{\mathsf{W}}
\DeclareMathOperator{\ho}{\mathsf{h}}
\DeclareMathOperator{\knum}{\mathsf{k}}
\newcommand{\leqnomode}{\tagsleft@true}
\newcommand{\reqnomode}{\tagsleft@false}
\let\ker\relax
\DeclareMathOperator{\ker}{\mathsf{ker}}
\let\dim\relax
\DeclareMathOperator{\dim}{\mathsf{dim}}
\let\gcd\relax
\DeclareMathOperator{\gcd}{\mathsf{gcd}}
\let\hom\relax
\DeclareMathOperator{\hom}{\mathsf{hom}}
\newcommand{\sym}{\mathfrak S}
\newcommand{\cM}{\mathcal M}
\newcommand{\cN}{\mathcal N}
\newcommand{\alt}{\mathfrak a}
\newcommand{\reg}{\mathcal O}
\newcommand{\I}{\mathcal I}
\newcommand{\eps}{\varepsilon}
\renewcommand{\theta}{\vartheta}
\renewcommand{\rho}{\varrho}
\renewcommand{\phi}{\varphi}
\renewcommand{\_}{\underline{\,\,\,\,}}
\newtheorem{theorem}{Theorem}[section]
  \newaliascnt{proposition}{theorem}
  \newtheorem{prop}[proposition]{Proposition}
  \newaliascnt{lemma}{theorem}
  \newtheorem{lemma}[lemma]{Lemma}
  \newaliascnt{corollary}{theorem}
  \newtheorem{cor}[corollary]{Corollary}
\theoremstyle{definition}
  \newaliascnt{definition}{theorem}
  \newaliascnt{remark}{theorem}
  \newtheorem{remark}[remark]{Remark}
  \newaliascnt{condition}{theorem}
  \newaliascnt{question}{theorem}
  \newaliascnt{example}{theorem}
\begin{document}

\title{Extension groups of Tautological Bundles on Symmetric Products of Curves}
\author[A.\ Krug]{Andreas Krug}

\begin{abstract}
 We provide a spectral sequence computing the extension groups of tautological bundles on symmetric products of curves. One main consequence is that, if $E\neq \reg_X$ is simple, then the natural map $\Ext^1(E,E)\to \Ext^1(E^{[n]},E^{[n]})$ is injective for every $n$. Along with previous results, this implies that $E\mapsto E^{[n]}$ defines an embedding of the moduli space of stable bundles of slope $\mu\notin[-1,n-1]$ on the curve $X$ into the moduli space of stable bundles on the symmetric product $X^{(n)}$. The image of this embedding is, in most cases, contained in the singular locus. For line bundles on a non-hyperelliptic curve, the embedding identifies the Brill--Noether loci of $X$ with the loci in the moduli space of stable bundles on $X^{(n)}$ where the dimension of the tangent space jumps. We also prove that $E^{[n]}$ is simple if $E$ is simple. 
\end{abstract}

\maketitle

\section{Introduction}

Various aspects of tautological bundles on symmetric products of curves have been studied since the early 1960's; see \cite{Schwarzenberger--bundlesplane, Schwarzenberger--secant, Mattuck--sym}. A bit more recently, focuses of research became stability of these bundles \cite{AnconaOttaviani--stab, BohnhorstSpindler--stab, EMLN-stab, BiswasNagaraj-stab, DanPal-stab, BasuDan-stab, Mistretta--stab, Krug--stab} and connections to Koszul cohomology \cite{Voisin--Green1, Voisin--Green2, Ein-Lazarsfeld--gonalityconj}.

Let $X$ be a smooth projective curve over $\IC$ and $n\in \IN$ a positive integer. The symmetric product is the quotient $X^{(n)}=X^n/\sym_n$ by the natural $\sym_n$-action permuting the factors of $X^n$. Let $\pi\colon X^n\to X^{(n)}$ be the quotient morphism and write $x_1+\dots+x_n:=\pi(x_1,\dots, x_n)$ which can be regarded as an effective degree $n$ divisor on $X$. Then, for a vector bundle $E$ on $X$, there is the associated tautological bundle $E^{[n]}$ of $\rank(E^{[n]})=n\rank(E)$, whose fibres are given by
$E^{[n]}(x_1+\dots+x_n)=\Ho^0(E_{\mid x_1+\dots+x_n})$; see \autoref{subsect:taut} for the precise definition.

In this paper, we study the graded vector spaces $\Ext^*(E^{[n]},F^{[n]})$ between two tautological bundles. The technical main results are in \autoref{sect:technicalmain}. There, we describe a spectral sequence, whose terms on the $\IE^1$-level are products of extension and cohomology groups on the curve $X$, and whose limit terms are the extension groups $\Ext^i(E^{[n]},F^{[n]})$.
This spectral sequence follows from computations regarding the $\sym_n$-equivariant pull-back and push-forward along the quotient map $\pi\colon X^n\to X^{(n)}$ carried out in \autoref{sect:push-pull}. These computations are analogous to the ones in the surface case, see \cite{Sca1,Sca2, KruExt, Krug--remarksMcKay}, where the derived McKay correspondence $\D(S^{[n]})\cong \D_{\sym_n}(S^n)$ of \cite{BKR} and \cite{Hai} is used. However, the result for $\Ext^*(E^{[n]},F^{[n]})$ is more complicated in the curve than in the surface case. The reason behind this is the fact that, while the derived McKay correspondence gives equivalences in the surface case, the functors $\pi_*^{\sym_n}\colon \D_{\sym_n}(X^n)\rightleftharpoons\D(X^{(n)}): \pi^*$ in the curve case are not equivalences. For a few more words on the comparison between the curve and the surface case, see \autoref{rem:curvesurf}.

Instead of giving the details of the technical results of \autoref{sect:technicalmain} in this introduction, let us focus on the consequences. In \cite{Krug--stab}, it is shown that, if $E$ is a stable vector bundle with slope $\mu(E)\notin[-1,n-1]$, then the associated tautological bundle $E^{[n]}$ is again $\mu$-stable with respect to the polarisation of $X^n$ given by $H=\reg(x+X^{(n-1)})$ for some $x\in X$. On the critical interval $[-1,n-1]$ the picture is more complicated: There are some stable bundles $E$ with slope in this interval, such that $E^{[n]}$ is not stable any more \cite[Sect.\ 3]{Krug--stab}, but others where stability is still preserved \cite{BiswasNagaraj-stab}. A first application of our spectral sequence for the extension groups is that, at least, simpleness is also preserved on the critical interval.    

\begin{theorem}[\autoref{thm:simpleg2} \& \autoref{thm:simpleg1}]
Let $X$ be a smooth projective curve of genus $g\ge 1$, and $E\neq \reg_X$ a non-trivial vector bundle on $X$. Then, for every $n\in \IN$, we have
\[
 E \text{ is simple} \quad\implies\quad E^{[n]} \text{ is simple.} 
\]
\end{theorem}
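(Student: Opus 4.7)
The plan is to prove the sharp bound $\hom(E^{[n]}, E^{[n]}) \le 1$; the reverse inequality is automatic, because the tautological construction $E \mapsto E^{[n]}$ is functorial and sends $\id_E$ to $\id_{E^{[n]}}$, giving an injection $\IC = \Hom(E,E) \hookrightarrow \Hom(E^{[n]}, E^{[n]})$ whose image is the line spanned by $\id_{E^{[n]}}$.

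First I would specialise the spectral sequence of \autoref{sect:technicalmain} to the case $F = E$ and extract the entries of its $\IE^1$-page that contribute to total degree zero. The resulting upper bound for $\hom(E^{[n]}, E^{[n]})$ takes the form of a sum of dimensions of cohomology groups on the curve, essentially of the shape $H^a(X, \sEnd(E) \otimes L)$, where $L$ runs through certain line bundles built from powers of the canonical bundle $K_X$ together with the divisor classes encoding the partial diagonals of $X^n$. One distinguished summand is $H^0(X, \sEnd(E)) = \Hom(E,E) = \IC$, which accounts for the identity and already matches the lower bound. The proof then reduces to showing that every remaining $\IE^1$-entry of total degree zero either vanishes outright or is killed by a differential on a later page.

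For $g \ge 2$, this is a matter of slope arithmetic: since $\deg K_X > 0$ and $\sEnd(E)$ is semistable of degree zero with $\hom(\reg_X, \sEnd(E)) = 1$, Serre duality combined with the non-trivial twists appearing in the spectral sequence forces the spurious summands to vanish. This settles \autoref{thm:simpleg2} and is the more routine case.

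The main obstacle is the elliptic case \autoref{thm:simpleg1}, because $K_X = \reg_X$ makes all candidate twists of degree zero and slope-based vanishing is unavailable. Here I would invoke Atiyah's classification of simple bundles on an elliptic curve, which implies that $H^0(X, \sEnd(E) \otimes L) = 0$ for every non-trivial degree-zero line bundle $L$ when $E$ is simple. The hypothesis $E \ne \reg_X$ is used precisely to guarantee that the troublesome twists appearing in the spectral sequence are non-trivial; any surviving trivial twists have to be cancelled by tracing the higher differentials. Control over these differentials is the delicate ingredient in the elliptic case.
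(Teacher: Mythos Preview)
Your outline for $g\ge 2$ is essentially the paper's argument: by \autoref{lem:extterms} the degree-zero contributions for $p\ge 1$ are $\Hom(E,E\otimes\omega_X^{-p})\oplus\bigl(\Hom(E,\omega_X^{-p})\otimes\Ho^0(E)\bigr)$, and these vanish because a nonzero element would produce a non-scalar endomorphism of $E$ via the inclusion $E\otimes\omega_X^{-p}\hookrightarrow E$. One caveat: you phrase this as ``$\sEnd(E)$ is semistable'', but semistability of $\sEnd(E)$ is not automatic for a merely \emph{simple} bundle $E$; the paper avoids this by arguing directly with the inclusion, and you should too.

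Your plan for $g=1$, however, rests on a misreading of the spectral sequence. The line bundles $L$ appearing in the $\IE^1$-terms are \emph{only} powers of $\omega_X$; there are no extra twists coming from partial-diagonal divisor classes once one has passed to $\sym_n$-invariants and applied K\"unneth (see \autoref{lem:extterms}). Since $\omega_X\cong\reg_X$, every twist is trivial, so Atiyah's classification gives no mileage: the terms you hope to kill this way are exactly $\Hom(E,E\otimes\omega_X^{-p})=\Hom(E,E)\cong\IC$ for each $p=1,\dots,n-1$, and they do not vanish. The hypothesis $E\neq\reg_X$ is not used to make twists non-trivial; it is used (together with simplicity) for the elementary observation $\Ho^0(E)\otimes\Ho^0(E^\vee)=0$, which disposes of the \emph{other} summand of $\IE^1_{-p,p}$.

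So the ``delicate ingredient'' you mention in passing is in fact the whole argument in genus~$1$: every one of the $n-1$ surviving copies of $\IC$ at $\IE^1_{-p,p}$ must be killed by the $d_1$-differential. The paper does this by computing the dual of $d_1\colon\IE^1_{-p,p}\to\IE^1_{-p+1,p}$ explicitly (\autoref{lem:ddg1}) and observing that one of its components is cup product with a generator of the one-dimensional space $\Ho^1(\reg_X)$, hence an isomorphism $\IC\to\IC$. This makes $d_1$ injective on $\IE^1_{-p,p}$, so $\IE^2_{-p,p}=0$ for $p\ge 1$. Your proposal does not supply this computation, and nothing short of it will do.
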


Let us fix a pair $(r,d)$ with $d\notin[-r,(n-1)r]$. Then, by the result of \cite{Krug--stab} mentioned above, there is a well-defined map $\phi \colon \cM_X(d,r)\to \cM_{X^{(n)}}$, $[E]\mapsto [E^{[n]}]$ from the moduli space of $H$-slope stable bundles of degree $d$ and rank $r$ to the moduli space of stable bundles on $X^{(n)}$; see \autoref{subsect:inducedFM} for details.  
This map is injective as the isomorphism type of a stable bundle can be reconstructed from the associated tautological bundle; see \cite{Biswas-Nagaraj--reconstructioncurves} for $g\ge 2$ and \cite{Krug--reconstruction} for $g\le 1$. We will prove that the natural map $(\_)^{[n]}\colon \Ext^1(E,E)\to \Ext^1(E^{[n]},E^{[n]})$ is injective for every stable bundle $E\neq \reg_X$; see \autoref{prop:Extinjg2}, \autoref{prop:Extinjg1}. This means that the differential of $\phi$ is injective (see \autoref{prop:FMstab} for details), which implies 

\begin{theorem}\label{thm:mainclosed}
Let $X$ be a smooth projective curve, $d\in \IZ$, $n,r\in \IN$ with $d\notin[-r,(n-1)r]$ and $\gcd(d,r)=1$. 
Let $\cM_X(d,r)$ be the moduli space of stable bundles of rank $r$ and degree $d$ on $X$, and let $\cM_{X^{[n]}}$ be the moduli space of $H$-slope stable bundles on $X^{(n)}$. 
Then the morphism
\[
 \phi\colon \cM_X(d,r)\to \cM_{X^{(n)}}\quad,\quad [E]\mapsto [E^{[n]}]
\]
is a closed embedding.
\end{theorem}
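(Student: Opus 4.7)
The proof decomposes into three standard ingredients: showing that $\phi$ is (a) a proper morphism of schemes, (b) injective on closed points, and (c) unramified, meaning that $d\phi_{[E]}\colon T_{[E]}\cM_X(d,r)\to T_{[E^{[n]}]}\cM_{X^{(n)}}$ is injective at every $[E]\in\cM_X(d,r)$. For a morphism of $\IC$-varieties of finite type, the combination (a)+(b)+(c) is equivalent to $\phi$ being a closed immersion (the unramifiedness in (c) combined with smoothness of the source localises the problem to scheme-theoretic fibres of length one, and properness plus topological injectivity then promote the bijection onto the image to a scheme-theoretic isomorphism onto a closed subvariety).

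Property (a) is essentially free from $\gcd(d,r)=1$, which guarantees that $\cM_X(d,r)$ is a smooth projective fine moduli space. The morphism $\phi$ itself is induced by a Fourier--Mukai-type functor applied to the universal family on $X\times\cM_X(d,r)$ (see \autoref{subsect:inducedFM}), so it is a morphism of schemes, and it is proper because the source is projective and the target $\cM_{X^{(n)}}$ is separated. Property (b) is precisely the content of the reconstruction theorems of \cite{Biswas-Nagaraj--reconstructioncurves} (for $g\ge 2$) and \cite{Krug--reconstruction} (for $g\le 1$), already cited in the introduction: if $E^{[n]}\cong F^{[n]}$ for stable $E,F\in\cM_X(d,r)$, then $E\cong F$.

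Property (c) is the heart of the matter. Deformation theory identifies $T_{[E]}\cM_X(d,r)\cong\Ext^1(E,E)$ and $T_{[E^{[n]}]}\cM_{X^{(n)}}\cong\Ext^1(E^{[n]},E^{[n]})$, and the Fourier--Mukai construction of $\phi$ realises $d\phi_{[E]}$ as the canonical map $(\_)^{[n]}\colon \Ext^1(E,E)\to\Ext^1(E^{[n]},E^{[n]})$; this is spelled out in \autoref{prop:FMstab}. Its injectivity for every stable $E\neq\reg_X$ is the content of the extension-injectivity propositions announced just before the theorem statement, which themselves rest on the spectral sequence constructed in \autoref{sect:technicalmain}. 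The hypothesis $d\notin[-r,(n-1)r]$ excludes the single pair $(d,r)=(0,1)$ for which $E=\reg_X$ could lie in $\cM_X(d,r)$, since $0\in[-1,n-1]$ for every $n\ge 1$; hence the extension-injectivity hypothesis is satisfied at every $[E]\in\cM_X(d,r)$, yielding (c). The genuine obstacle of the theorem is thus the extension-injectivity statement feeding into (c); the argument above is essentially a standard packaging of (a), (b), (c) into the closed-immersion criterion.
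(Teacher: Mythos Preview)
Your proposal is correct and follows essentially the same approach as the paper: the paper's own argument (spelled out in \autoref{rem:mainsuffice}) likewise reduces to properness via $\gcd(d,r)=1$, injectivity on closed points via the reconstruction theorems, and injectivity on tangent vectors via \autoref{prop:FMstab} together with the $\Ext^1$-injectivity propositions, invoking the same closed-embedding criterion. The only detail you leave implicit is the $g=0$ case, which the paper dispatches separately by observing that $\Ext^1(E,E)=0$ for any simple bundle on $\IP^1$.
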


We move on to study properties of the image of the morphism $\phi\colon \cM_X(d,r)\hookrightarrow \cM_{X^{(n)}}$. 
In \autoref{Thm:Ext1} and \autoref{Thm:Ext1d<0}, we give closed formulae for $\Ext^1(E^{[n]},E^{[n]})$ if $E\neq \reg_X$ is stable and $g\ge 2$. For simplicity, here in the introduction, we only state a special case where the direct summands involving Koszul cohomology are guaranteed to vanish.

\begin{theorem}
Let $X$ be non-hyperelliptic of genus $g\ge 3$. Then, for every line bundle $L$ on $X$ of degree $\deg(L)\notin\{0,1\}$, we have
\begin{equation*}
 \Ext^1(L^{[n]},L^{[n]})\cong \Ext^1(L,L)\oplus \Ho^1(\reg_X)\oplus\bigl(\Ho^0(L)\otimes \Ho^1(L^\vee)  \bigr) \oplus\bigl(\Ho^1(L)\otimes \Ho^0(L^\vee)  \bigr)\,.
\end{equation*}
In particular, for any two line bundles $L_1, L_2\in \Pic(X)$ of the same degree $\deg(L_1)=\deg(L_2)\ge 3$ and every $n\ge 2$, we have 
\begin{equation*}
 \ho^0(L_1)<\ho^0(L_2)\quad \iff \quad \ext^1(L_1^{[n]},L_1^{[n]})< \ext^1(L_2^{[n]},L_2^{[n]})\,.
\end{equation*}
\end{theorem}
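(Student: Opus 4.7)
The strategy is to specialize the general formula for $\Ext^1(L^{[n]},L^{[n]})$ established in \autoref{Thm:Ext1} and \autoref{Thm:Ext1d<0} to the present hypotheses. Those theorems, read off from the spectral sequence of \autoref{sect:technicalmain}, express $\Ext^1(L^{[n]},L^{[n]})$ for a line bundle $L$ (which is automatically simple and stable) as the direct sum of the four terms listed in the statement together with additional Koszul-type summands arising as kernels/cokernels of natural multiplication maps such as $\Ho^0(L)\otimes \Ho^0(\omega_X\otimes L^\vee)\to \Ho^0(\omega_X)$ (and the analogue with the roles of $L$ and $L^\vee$ swapped).

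The first step, and the one I expect to be the main technical point, is to verify that under the assumption that $X$ is non-hyperelliptic of genus $g\ge 3$ and $\deg(L)\notin\{0,1\}$ these Koszul contributions vanish. This is where the classical surjectivity theorems for multiplication maps on non-hyperelliptic curves enter (in the spirit of Noether's theorem and the base-point-free pencil trick): under these hypotheses the relevant multiplication maps are surjective, so by Serre-dual exactness their kernels/cokernels vanish. The exclusion $\deg(L)\notin\{0,1\}$ removes precisely the degenerate cases (the trivial bundle and certain degree-one bundles) where surjectivity can fail even on a non-hyperelliptic curve. The matching of the Koszul terms on the $\IE^1$-page with classically studied multiplication maps is where the bookkeeping of \autoref{sect:technicalmain} does the real work.

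For the second assertion, set $d:=\deg(L_1)=\deg(L_2)\ge 3$. Then $L_i^\vee$ has negative degree, so $\ho^0(L_i^\vee)=0$, making the fourth summand in the formula vanish. Riemann--Roch gives $\ho^1(L_i^\vee)=d+g-1$, which depends only on $d$. Combined with $\ext^1(L_i,L_i)=\ho^1(\reg_X)=g$, the formula collapses to
\[
\ext^1(L_i^{[n]},L_i^{[n]})=2g+\ho^0(L_i)\cdot(d+g-1).
\]
Since $d+g-1\ge 5>0$, this is strictly increasing in $\ho^0(L_i)$, and the claimed equivalence is immediate. The restriction to $n\ge 2$ in the "In particular" part reflects the fact that for $n=1$ one has $L^{[1]}=L$, so the extra summands beyond $\Ext^1(L,L)$ are not present and the comparison becomes trivial.
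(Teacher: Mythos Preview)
Your overall approach matches the paper's: specialize \autoref{Thm:Ext1} (for $\deg L\ge 2$) and \autoref{Thm:Ext1d<0} (for $\deg L<0$) to a line bundle on a non-hyperelliptic curve, and verify that the extra Koszul-type summands $W_L^\vee$ and $\Ho^0(L)\otimes\K_{0,2}(L,\omega)^\vee$ vanish. Your numerical argument for the second assertion is also fine and is exactly what the paper leaves implicit in its corollary.

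However, you have misidentified the relevant multiplication maps. The term you write down, $\Ho^0(L)\otimes\Ho^0(\omega_X\otimes L^\vee)\to\Ho^0(\omega_X)$, is the Petri map; it does not appear in the spectral sequence and its surjectivity is neither needed nor generally true. What actually has to be checked is:
\begin{itemize}
\item $W_L$ is a quotient of $\K_{0,2}(\sEnd(L),\omega)=\K_{0,2}(\omega)=\coker\bigl(\Ho^0(\omega)\otimes\Ho^0(\omega)\xrightarrow{\cup}\Ho^0(\omega^2)\bigr)$, which vanishes by Max Noether's theorem for non-hyperelliptic $X$ (this handles the negative-degree case completely, and part of the positive-degree case);
\item $\K_{0,2}(L,\omega)=\coker\bigl(\Ho^0(L\otimes\omega)\otimes\Ho^0(\omega)\xrightarrow{\cup}\Ho^0(L\otimes\omega^2)\bigr)$, which vanishes for $\deg L\ge 2$ on a non-hyperelliptic curve by Butler's result \cite[Thm.\ 1]{Butler--line}.
\end{itemize}
This is spelled out in \autoref{rem:Kvanish}. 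So the exclusion $\deg L\notin\{0,1\}$ is needed precisely to apply Butler in the positive-degree case (and to avoid $L=\reg_X$), not for any Petri-type obstruction. Once you replace your placeholder maps with these, the argument is complete and identical to the paper's.
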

The second part of the statement can be interpreted by saying that the point $[L^{[n]}]$ of $\cM_{X^{(n)}}$ becomes more singular  when the line bundle $L$ becomes more special. More precisely, for $n\le  d\le 2(g-1)$, the closed embedding $\phi\colon \cM_X(d,1)=\Pic_d(X)\hookrightarrow \cM_{X^{(n)}}$ pulls back the stratification of $\cM_{X^{(n)}}$ given by the dimension of the tangent spaces of points to the Brill--Noether stratification of $\Pic_d(X)$. 

It turns out that, in most cases, the image of $\phi\colon \cM_X(r,d)\to \cM_{X^{(n)}}$ is entirely contained in the singular locus of $\cM_{X^{(n)}}$.  

\begin{theorem}[\autoref{thm:sing}]
Let $E\in \VB(X)$ be a stable vector bundle on a curve of genus $g\ge 3$. For $|\mu(E)|\gg 0$, the point $[E^{[n]}]$ is a singular point of $\cM_{X^{(n)}}$ for every $n\in \IN$, except if $n=2$, and $X$ is non-hyperelliptic of genus $g=3$.
\end{theorem}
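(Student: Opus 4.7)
A point $[F]\in\cM_{X^{(n)}}$ is singular precisely when $\dim_{[F]}\cM_{X^{(n)}}<\ext^1(F,F)=\dim T_{[F]}\cM_{X^{(n)}}$. My strategy is to compute (or lower-bound) $\ext^1(E^{[n]},E^{[n]})$ and show that it grows quadratically in $|\mu(E)|$, while the local moduli dimension grows strictly more slowly; for $|\mu(E)|\gg 0$ the strict inequality then follows, giving singularity.

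The first step is to apply the closed formulas for $\Ext^1(E^{[n]},E^{[n]})$ from \autoref{Thm:Ext1} and \autoref{Thm:Ext1d<0}. They decompose $\ext^1(E^{[n]},E^{[n]})$ as $\ext^1(E,E)=r^2(g-1)+1$ plus contributions coming from curve-level cohomology of $E$ and $E^\vee$, most importantly products such as $\ho^0(E)\cdot\ho^1(E^\vee)$ and $\ho^1(E)\cdot\ho^0(E^\vee)$. By Riemann-Roch and Serre vanishing, when $|\mu(E)|\gg 0$ one of $\ho^0(E)$ or $\ho^1(E)$ (and the dual on $E^\vee$) grows linearly in $|\mu(E)|$, so at least one such product term grows quadratically, dominating the constant contribution $\ext^1(E,E)$.

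The second step is to bound $\dim_{[E^{[n]}]}\cM_{X^{(n)}}$ from above. Any irreducible component of the moduli space is generically smooth on the open locus where $\ext^1$ achieves its minimum, so its local dimension equals the generic value $\ext^1_{\mathrm{gen}}$ in that component. To control this number I would compute $\chi(E^{[n]},E^{[n]})$ by Hirzebruch-Riemann-Roch on $X^{(n)}$ using the known Chern classes of tautological bundles, and compute the higher $\ext^i(E^{[n]},E^{[n]})$ via the spectral sequence of \autoref{sect:technicalmain}, applying Serre duality on $X^{(n)}$ where helpful. Combining the Euler-characteristic identity with these computations gives a polynomial bound on $\ext^1_{\mathrm{gen}}$ whose degree in $\mu(E)$ is strictly less than the quadratic growth obtained in the first step; by upper semicontinuity this upper bound is attained at the generic point of the component and forces $[E^{[n]}]$ into the singular locus as soon as $|\mu(E)|$ is large.

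The main obstacle is converting the numerical gap into an actual deformation-theoretic obstruction, equivalently producing a nonzero obstruction class in $\Ext^2(E^{[n]},E^{[n]})$ that is genuinely non-trivial rather than merely contributing to an alternating sum. The exceptional case $(n,g)=(2,3)$ with $X$ non-hyperelliptic is exactly the one in which this obstruction vanishes: on the surface $X^{(2)}$ of genus $3$, Serre duality gives $\ext^2(E^{[2]},E^{[2]})\cong\hom(E^{[2]},E^{[2]}\otimes\omega_{X^{(2)}})$, and the special geometry of $X^{(2)}$ for non-hyperelliptic $g=3$, combined with the spectral sequence computation, forces this $\Hom$-space to vanish; hence $[E^{[2]}]$ lies in the smooth locus in that single case, consistently with the stated exception.
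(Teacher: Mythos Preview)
Your second step does not go through, and this is the crux of the matter. You want to bound $\dim_{[E^{[n]}]}\cM_{X^{(n)}}$ by the generic $\ext^1$ on the component, and then argue that this generic value grows more slowly in $|\mu(E)|$ than $\ext^1(E^{[n]},E^{[n]})$. But the Euler characteristic $\chi(F,F)$ is constant along the component, and the spectral sequence of \autoref{sect:technicalmain} computes $\ext^i$ only at the tautological point $[E^{[n]}]$, not at the generic point. So from $\chi$ together with the special-point $\ext^i$ you cannot extract $\ext^1_{\mathrm{gen}}$. Worse, your asserted growth comparison is false: the term $\ho^0(E)\cdot\ho^1(E^\vee)$ already makes $-\chi(E^{[n]},E^{[n]})$ grow quadratically in $d$, hence $\ext^1_{\mathrm{gen}}\ge 1-\chi$ grows quadratically as well; there is no lower-order gap to exploit. (Your claim that the component is generically smooth is also unjustified for $n\ge 3$, where $X^{(n)}$ has dimension $\ge 3$ and moduli spaces of sheaves need not be generically reduced.)

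The paper proceeds quite differently. For $n=2$ it invokes an external result (\cite[Cor.\ 1.3]{Krugdisc}) that the component has the expected dimension, and then shows directly that $\ext^2_0(E^{[2]},E^{[2]})>0$ via the spectral sequence computation of $\IE^2_{0,2}$; this forces the tangent space to exceed the dimension. For $n\ge 3$ no such dimension result is available, and the paper instead produces an explicit obstruction: it shows that the Yoneda square $q\colon\Ext^1(E^{[n]},E^{[n]})\to\Ext^2(E^{[n]},E^{[n]})$ is nonzero, by lifting the problem to $\Ext^*_{\sym_n}(\CC^0_E,\CC^0_E)$ via the edge morphisms and comparing the image of a component $\widehat q$ with the image of the relevant differential $\IE^1_{-1,2}\to\IE^1_{0,2}$. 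This is precisely the ``main obstacle'' you flag but do not resolve; it is the actual content of the proof for $n\ge 3$. Finally, in the exceptional case $n=2$, $g=3$ non-hyperelliptic, it is not $\ext^2$ that vanishes (indeed $\ext^2(L^{[2]},L^{[2]})=\binom g2=3$ there) but the traceless part $\ext^2_0$; your Serre-duality claim that $\Hom(E^{[2]},E^{[2]}\otimes\omega_{X^{(2)}})=0$ is incorrect.
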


In the special case that $n=2$, $X$ is of genus $g=2$ or non-hyperelliptic of genus $g=3$, and $|d|\gg 0$, however, the image $\phi(\Pic_d(X))$ is entirely contained in the smooth locus of $\cM_{X^{(2)}}$; see \autoref{prop:smooth} and \autoref{rem:smooth}. Hence, in these cases, one might conjecture that the whole connected component of $\cM_{X^{(2)}}$ containing $\phi(\Pic_d(X))$ is smooth.

\subsection*{Acknowledgements}
The authors thanks Manfred Lehn for a short but helpful discussion regarding the proof of \autoref{thm:sing} and Pieter Belmans for comments on a first version of this paper.

\section{Preliminaries}

\subsection{Notation and conventions}

We work over the ground field $\IC$.
Throughout, $X$ will be a smooth projective curve. We often write $\reg$ and $\omega$ for the trivial and the canonical line bundle $\reg_X$ and $\omega_X$.

Given a variety $M$, we write $\VB(M)$ for the category of vector bundles and $\Coh(M)$ for the category of coherent sheaves on $M$. Furthermore, we write $\D(M):=\D^b(\Coh(M))$ for the bounded derived category of coherent sheaves. 

Given a positive integer $n$, we write $[n]:=\{1,2,\dots, n\}$.

\subsection{Graded vector spaces}\label{subsect:graded}

A \textit{graded vector space} is a vector space $V$ together with a finite direct sum decomposition $V=\bigoplus_{i\in \IZ} V^i$ where $V^i$ as called the \textit{degree $i$ part} of $V$.
For $d\in \IZ$, the shift $V[d]$ is the graded vector space whose degree $i$ part is $V^{i+d}$. 

On the $k$-fold tensor product $V^{\otimes k}$, there is the $\sym_k$-action given by 
\[
 (i\,,\, i+1)\cdot (v_1\otimes\dots\otimes v_i\otimes v_{i+1}\otimes \dots\otimes v_k)=(-1)^{\deg v_i+\deg v_{i+1}}v_1\otimes\dots\otimes v_{i+1}\otimes v_i\otimes \dots\otimes v_k\,.
\]
We define the symmetric power $S^kV$ and the wedge power $\wedge^kV$ as the invariants and the anti-invariants under this action. There is a natural isomorphism 
\begin{equation}\label{eq:Swedge}
 S^k(V[1])\cong (\wedge^k V)[k]\,.
\end{equation}
The dual $V^\vee$ is the graded vector space whose degree $i$ part is $(V^{-i})^\vee$. 

For $E,F\in \Coh(M)$ we summarise the Ext-spaces in the graded vector space
\[
 \Ext^*(E,F):=\bigoplus_{i\ge 0}\Ext^i(E,F)[-i]\,.
\]
With this notation, Serre duality on a smooth projective variety $Y$ of dimension $n$ takes the form
\[
 \Ext^*(E,F)^\vee[-n]\cong \Ext^*(F,E\otimes \omega_Y)\,. 
\]

\subsection{Tautological bundles}\label{subsect:taut}

Let $X$ be a smooth curve, and $n\in \IN$ a positive integer. The symmetric group $\sym_n$ acts on $X^n$ by permutation of the factors. We denote the quotient by $X^{(n)}=X^n/\sym_n$ and call it the \emph{symmetric product}. By the  Chevalley--Shephard--Todd theorem, $X^{(n)}$ is again smooth. The symmetric product can be identified with the Hilbert scheme of $n$ points on $X$, with the universal family $\Xi=\Xi_n\subset X\times X^{[n]}$ of length $n$ subschemes of $X$ given by the image of the embedding \[X\times X^{(n-1)}\hookrightarrow X\times X^{(n)}\quad,\quad(x,x_1+\dots+x_{n-1})\mapsto (x,x_1+\dots+x_{n-1}+x)\,.\] This means that the Hilbert--Chow morphism 
\[
 \mu\colon X^{[n]}\to X^{(n)}\quad,\quad[\xi]\mapsto \sum_{x\in \xi} \ell(\reg_{\xi,x})x
\]
sending a zero-dimensional length $n$ subscheme to its weighted support is an isomorphism for $X$ a smooth curve.
We denote the projections from the universal family by 
\[X\xleftarrow{\pr_X} \Xi\xrightarrow{\pr_{X^{(n)}}} X^{(n)}\,. 
\]
Since $\pr_{X^{(n)}*}$ is flat and finite of degree $n$, the push-forward $\pr_{X^{(n)}*}$ is exact and sends vector bundles to vector bundles. Hence, for every $E\in \VB(X)$, there is the associated \emph{tautological} vector bundle 
\[
 E^{[n]}:=\pr_{X^{(n)}*}\pr_X^*E \in \VB(X^{(n)}) 
\]
with $\rank(E^{[n]})=n\rank(E)$. 
Since $\Xi$ is also flat over $X$, see \cite[Thm.\ 1.1]{Krug--reconstruction}, the pull-back $\pr_{X}^*$ is exact too. This means that $E\mapsto E^{[n]}$ extends to a functor $(\_)^{[n]}\colon \D(X)\to \D(X^{(n)})$ between the derived categories, and this functor is isomorphic to the Fourier--Mukai transform 
\[
\FM_{\reg_{\Xi}}\colon \D(X)\to\D(X^{(n)})\,. 
\]
Whenever we speak of \emph{stability} of bundles on $X^{(n)}$, we mean slope stability with respect to the polarisation $H=\reg(x+C^{(n-1)})$ for any point $x\in X$. Indeed, the numerical equivalence class of $H$, and hence the notion of slope stability, is independent of the chosen point $x\in C$; see \cite[Sect.\ 1.3]{Krug--stab} for details.

\subsection{Morphisms of moduli spaces induced by Fourier--Mukai transforms}\label{subsect:inducedFM}

Let $X$ and $Y$ be two polarised varieties, and $Z\subset X\times Y$ a closed subscheme which is flat over $X$, and flat and finite over $Y$. We consider the associated Fourier--Mukai transform
\[
 \Phi:=\FM_{\reg_Z}\cong \pr_{Y*}\circ \pr_X^*\colon\D(X)\to \D(Y)
\]
where $\pr_Y\colon Z\to Y$ and $\pr_X\colon Z\to X$ are the projections. 
By the assumptions on $Z$, it restricts to exact functors $\Phi\colon \Coh(X)\to \Coh(Y)$ and $\Phi\colon \VB(X)\to \VB(Y)$. 
Of course, what we have in mind is the case that $X$ is a smooth curve, $Y=X^{(n)}$, and $Z=\Xi_n$ is the universal family of length $n$ subschemes of $X$.  

\begin{prop}\label{prop:FMstab}
Let $\cM$ be a moduli space of stable sheaves on $X$ such that $\Phi(E)$ is a stable sheaf on $Y$ for every $[E]\in \cM$. Then there is a morphism $\phi\colon \cM\to \cN$ to the moduli space $\cN$ of stable sheaves on $Y$ given on closed points by $[E]\mapsto [\Phi(E)]$. Furthermore, for every $[E]\in \cM$ we have a commutative diagram
\begin{equation}\label{eq:CDtang}
\begin{CD}
\TT_\cM([E])
@>{\phi_*}>>
\TT_\cN([\Phi(E)])
\\
@V{\cong}VV
@VV{\cong}V
\\
\Ext^1(E,E)
@>{\Phi}>>
\Ext^1(\Phi(E),\Phi(E))\,.
\end{CD} 
\end{equation}   
\end{prop}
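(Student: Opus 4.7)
The plan is to construct $\phi$ via the universal property of coarse moduli spaces, then check commutativity of \eqref{eq:CDtang} by going through deformation theory.

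\textbf{Construction of $\phi$.} Given any scheme $T$ and any $T$-flat family $\cE$ on $X\times T$ whose fibres lie in $\cM$, I form the relative Fourier--Mukai transform
\[
\Phi_T(\cE):=(\pr_{Y\times T})_*\bigl((\pr_{X\times T})^*\cE\otimes \reg_{Z\times T}\bigr),
\]
where the projections are from $Z\times T\subset X\times Y\times T$. Because $\pr_X\colon Z\to X$ is flat and $\pr_Y\colon Z\to Y$ is finite flat, the same holds for the $T$-versions, so $\Phi_T$ is exact and sends $T$-flat families to $T$-flat families; moreover $\Phi_T(\cE)|_{Y\times\{t\}}\cong \Phi(\cE|_{X\times\{t\}})$ by flat base change. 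By hypothesis every fibre is a stable sheaf on $Y$, so $\Phi_T(\cE)$ is a $T$-family of stable sheaves and thus induces a morphism $T\to \cN$. This construction is functorial in $T$, so we obtain a natural transformation from the moduli functor of $\cM$ to that of $\cN$, which by the (co)representability of the coarse spaces yields the desired morphism $\phi\colon\cM\to\cN$. On closed points it is $[E]\mapsto[\Phi(E)]$ by construction.

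\textbf{Tangent map via extensions.} I use the standard identification of tangent spaces at stable (and hence simple) sheaves,
\[
\TT_\cM([E])\cong\Ext^1_X(E,E),\qquad \TT_\cN([\Phi(E)])\cong\Ext^1_Y(\Phi(E),\Phi(E)),
\]
where a class $\xi\in\Ext^1(E,E)$ corresponds to the first order deformation $0\to E\to \tilde E\to E\to 0$ of $E$ over $\Spec\IC[\eps]/(\eps^2)$. Applying the relative transform $\Phi_{\Spec\IC[\eps]/(\eps^2)}$ to $\tilde E$ gives, by construction of $\phi$, the deformation of $\Phi(E)$ representing $\phi_*(\xi)$. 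Because $\Phi$ is an exact functor of abelian categories (as $\pr_X^*$ is exact on coherent sheaves and $\pr_{Y*}$ is exact on sheaves flat over $Y$), applying it to $0\to E\to \tilde E\to E\to 0$ produces the short exact sequence
\[
0\to \Phi(E)\to \Phi(\tilde E)\to \Phi(E)\to 0,
\]
whose class is by definition $\Phi(\xi)\in\Ext^1_Y(\Phi(E),\Phi(E))$. The two descriptions agree, which is precisely the commutativity of \eqref{eq:CDtang}.

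\textbf{Main obstacle.} The genuinely subtle point is the first step: one must ensure that the relative Fourier--Mukai transform is compatible with arbitrary base change and preserves $T$-flatness, so that it really defines a natural transformation between moduli functors. Here it suffices to invoke flat base change for the finite flat map $\pr_Y$ together with exactness of $\pr_X^*$ on coherent sheaves. Once this functoriality is in place, the identification of the differential via first-order deformations is a direct unwinding of definitions.
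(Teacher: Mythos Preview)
Your proof is correct and, for the tangent-map part, essentially identical to the paper's: both identify tangent vectors with first-order deformations $0\to E\to\tilde E\to E\to 0$, note that the exact functor $\Phi$ carries this to the short exact sequence defining $\Phi(\xi)$, and observe that by construction this is the deformation classified by $\phi_*(\xi)$.

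Where you differ is in the construction of $\phi$. The paper does not invoke the corepresentability/universal property of coarse moduli spaces directly; instead it first assumes $\cM$ is fine, applies the relative Fourier--Mukai transform to the universal family $\cU$ on $\cM\times X$ to obtain a flat family on $\cM\times Y$, and then reduces the general case to this one by passing to an \'etale cover over which local universal families exist (citing C\u ald\u araru's thesis) and gluing by \'etale descent. Your route---building a natural transformation of moduli functors $T\mapsto\Phi_T$ and then appealing to the universal property of $\cM$ as a coarse moduli space---is more streamlined and avoids the explicit descent step, though it hides the same content in the phrase ``by the (co)representability of the coarse spaces''. One should perhaps spell out that the factorisation $F_\cM\to F_\cN\to h_\cN$ through $h_\cM$ is precisely the universal property of a coarse moduli space, and that the resulting square $F_\cM\to h_\cM\to h_\cN$ versus $F_\cM\to F_\cN\to h_\cN$ commutes by construction---this commutativity is exactly what you use when you say ``by construction of $\phi$'' in the tangent-map step. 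The paper's approach, by contrast, makes the base-change verifications completely explicit via cartesian diagrams, at the cost of the extra \'etale-local bookkeeping.
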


\begin{proof}
We first consider the case that $\cM$ is a fine moduli space, which means that there is a universal family $\cU\in \Coh(\cM\times X)$. Set $\pr_X':=\id_\cM\times \pr_X$, $\pr_Y':=\id_\cM\times \pr_Y$, and
\[\Phi':=\pr_{Y*}'\circ \pr_X'^*\colon \D(\cM\times X)\to\D(\cM\times Y)\,. \] 
Since $\pr_X'$ is still flat and $\pr_Y'$ is still flat and finite, $\Phi'(\cU)$ is a sheaf which is flat over $Y$. Let $[E]\in \cM$ be a closed point given by $i_{[E]}\colon \Spec\IC\to \cM$. Then, $(i_{[E]}\times\id_X)^* \cU\cong E$. By base change along the diagram 
\[
\begin{CD}
X @<{\pr_X}<<
Z
@>{\pr_Y}>>
Y
\\
@V{i_{[E]}\times \id_X}VV
@V{i_{[E]}\times \id_Z}VV
@V{i_{[E]}\times \id_Y}VV
\\
\cM\times X @<{\pr'_X}<<
\cM\times Z
@>{\pr'_Y}>>
\cM\times Y
\end{CD} 
\]   
we get $(i_{[E]}\times \id_Y)^*\Phi'(\cU)\cong \Phi(E)$ which is stable by assumption. Hence $\Phi'(\cU)$ is a flat family of stable sheaves on $Y$, which means that we get a classifying morphism $\phi\colon \cM\to \cN$ with 
$\phi([E])=[\Phi(E)]$ for every closed point $[E]\in \cM$. 

By \cite[Prop.\ 3.3.2]{Cald-thesis}, there is an \'etale covering $\{U_i\}$ of $\cM$ such that there are universal families $\cU_i$ over $U_i\times X$ together with isomorphisms $\cU_{i\mid (U_i\times_\cM U_k)\times X}\cong \cU_{k\mid (U_i\times_\cM U_k)\times X}$.\footnote{Note that, in general, these isomorphisms only satisfy a \emph{twisted} cocycle condition, in which the local universal families do not glue to give a universal family as an ordinary sheaf, but only as a twisted sheaf.} Hence, we can construct the morphism $\phi$ first locally using the universal family as above, and then glue the local pieces by \'etale descent.    

Since the statement on the differential is an \'etale local one, we can assume that both, $\cM$ and $\cN$, have universal families $\cU$ and $\cW$ with  
\begin{equation}\label{eq:classpull}
 (\phi\times \id_Y)^*\cW\cong \cU\,.
\end{equation}
The vertical isomorphisms in \eqref{eq:CDtang} are given as follows (we explain the left one, but the right one is analogous): A tangent vector $v\in \TT_{\cM}([E])$ corresponds to a morphism $j_v\colon \Spec \IC[\eps]\to \cM$ with $r\circ j_v=i_{[E]}$, where $r\colon \Spec\IC\to \Spec\IC[\eps]$ is the inclusion of the reduction. Let $E_v:=(j_v\times \id_X)^*\cU$. Then, there is a short exact sequence
\begin{equation}\label{eq:sestang}
 0\to E\to E_v\to E\to 0\,, 
\end{equation}
where the second map is the restriction to the reduction. The class $c_v\in \Ext^1(E,E)$ of \eqref{eq:sestang} is the image of $v$ under the vertical isomorphism $\TT_{\cM}([E])\to \Ext^1(E,E)$ in \eqref{eq:CDtang}. 

Now, the tangent vector $\phi_*(v)\in \TT_{\cN}([\Phi(E)])$ corresponds to $j_{\phi_*(v)}=\phi\circ j_v\colon \Spec\IC[\eps]\to \cN$. Let $\Phi(E)_{\phi_*(v)}:=(j_{\phi_*(v)}\times \id_Y)^*\cW$. By base change along the diagram 
\[
\begin{CD}
X @<{\pr_X}<<
Z
@>{\pr_Y}>>
Y
\\
@V{r\times \id_X}VV
@V{r\times \id_Z}VV
@V{r\times \id_Y}VV
\\
\Spec\IC[\eps]\times X @<{\id \times \pr_X}<<
\Spec\IC[\eps]\times Z
@>{\id\times \pr_Y}>>
\Spec\IC[\eps]\times Y  @= \Spec\IC[\eps]\times Y
\\
@V{j_v\times \id_X}VV
@V{j_v\times \id_Z}VV
@V{j_v\times \id_Y}VV
@V{j_{\phi_*(v)}\times \id_Y}VV
\\
\cM\times X @<{\pr'_X}<<
\cM\times Z
@>{\pr'_Y}>>
\cM\times Y
@>{\phi\times \id_Y}>>
\cN\times Y
\end{CD} 
\]   
together with \eqref{eq:classpull}, we see that 
the short exact sequence
\[
 0\to \Phi(E)\to \Phi(E)_{\phi_*(v)}\to \Phi(E)\to 0\,,
\]
where the second map is the restriction to the reduction, is given by applying $\Phi$ to \eqref{eq:sestang}. Hence, $c_{\phi_*(v)}=\Phi(c_v)$.
\end{proof}

\begin{remark}\label{rem:mainsuffice}
We apply \autoref{prop:FMstab} to $\Phi=(\_)^{[n]}=\FM_{\reg_{\Xi_n}}\colon \D(X)\to \D(X^{(n)})$ and $\cM=\cM_X(r,d)$ where $d\notin [-r,(n-1)r]$. Then, by \cite{Krug--stab}, the assumption of \autoref{prop:FMstab} is fulfilled: $E^{[n]}$ is stable for every $[E]\in \cM_X(r,d)$. We get the classifying morphism $\phi\colon \cM_X(r,d)\to\cM_{X^{(n)}}$, $\phi([E])=[E^{[n]}]$.

If $\gcd(r,d)=1$, this morphism is projective. Hence, by \cite[Thm.\ 1.11]{Vakil--note43}, it is a closed embedding if and only if it is injective on closed points and on tangent vectors. 
The injectivity on closed points is the fact that $E^{[n]}\cong F^{[n]}$ implies $E\cong F$; see \cite[Thm.\ 1.1]{Biswas-Nagaraj--reconstructioncurves} for $g\ge 2$, \cite[Thm.\ 1.3]{Krug--reconstruction} for $g=1$, and \cite[Rem.\ 4.4]{Krug--reconstruction} for $g=0$. By \autoref{prop:FMstab}, injectivity on the tangent vectors will follow from the injectivity of $(\_)^{[n]}\colon \Ext^1(E,E)\to \Ext^1(E^{[n]},E^{[n]})$. For $g=0$, we have $\Ext^1(E,E)\cong \Hom(E,E\otimes \omega_{\IP^1})=0$ for every simple bundle $E$. Otherwise, composition with the embedding $E\otimes \IP^1\hookrightarrow E$ would give a non-trivial automorphism of $E$. 
For $g\ge 2$, injectivity of $(\_)^{[n]}$ on $\Ext^1$ is proved in \autoref{prop:Extinjg2}, and for $g=1$, this is  
\autoref{prop:Extinjg1}.
\end{remark}

\subsection{Equivariant sheaves}

In this subsection, we summarize some results on equivariant sheaves. For further details, we refer to \cite[Sect.\ 4]{BKR} or \cite{Elagin}.
Let $G$ be a finite group acting on a variety $M$. A \emph{$G$-equivariant} (coherent) sheaf is a pair $(E,\lambda)$ where $E\in \Coh(E)$ and $\lambda$ is a \emph{$G$-linearisation} of $E$, i.e.\ a family of isomorphism $\{\lambda_g\colon E\xrightarrow \sim g^*E\}_{g\in G}$ satisfying $\lambda_{gh}=(h^*\lambda_g)\circ \lambda_h$ for all $g,h\in G$. Later, we often write just $E$ instead of $(E,\lambda)$ with the linearisation omitted in the notation. Given two equivariant sheaves $(E,\lambda)$ and $(F,\mu)$, there is a $G$-action on $\Hom(E,F)$ by conjugation by the linearisations: $g\cdot \phi=\mu_g^{-1}\circ (g^*\phi)\circ \lambda_g$. We get an abelian category $\Coh_G(X)$ with $G$-equivariant coherent sheaves as objects, and morphism given by
\[
 \Hom_G\bigl((E,\lambda),(F,\mu)\bigr) :=\Hom(E,F)^G
\]
where the invariants on the right side are taken with regard to the above conjugation action. Similarly, there is an induced action by conjugation on $\Ext^i(E,F)$ and we have
\[
 \Ext^i_G\bigl((E,\lambda),(F,\mu)\bigr)\cong \Ext^i(E,F)^G
\]
where the left side is given by the derived functor of $\Hom_G$ and the right side is given by the invariants under the conjugation action. The structure sheaf $\reg_M$ has a canonical linearisation by push-forward of regular functions along the automorphisms of $G$, and we write 
\[
 \Ho^*_G(M,F):=\Hom_G(\reg_M,F)\cong \Ho^*(M,F)^G\,.
\]
The pull-backs and push-forwards of equivariant sheaves along equivariant morphisms inherit linearisations. This means that, for $f\colon M\to N$ a $G$-equivariant morphism of varieties, there is a pull-back $f^*\colon \Coh_G(N)\to \Coh_G(M)$ and, if $f$ is proper, also a push-forward $f_*\colon \Coh_G(M)\to \Coh_H(N)$ with $f^*\dashv f_*$.

Also, the tensor product of equivariant sheaves is canonically equipped with a linearisation. 
In particular, let $E\in \Coh_G(M)$ be an equivariant sheaf, and $\rho$ a $G$-representation. Then, we can define a new equivariant sheaf $E\otimes \rho:=E\otimes p^*\rho$ where $p\colon X\to\Spec\IC$ is the structure morphism. If $\chi\colon G\to \IC$ is a one-dimensional representation, i.e.\ a character, $E\otimes\chi$ has the same underlying sheaf $E$, but the linearisation is twisted by $\chi$. In particular, if $\alt$ denotes the sign representation of $\sym_n$ and $E=(E,\lambda)$, then $E\otimes\alt=(E,\overline \lambda)$ with $\overline \lambda_g=\sgn(g)\lambda_g$.

Let $H\subset G$ be a subgroup. There is the \emph{restriction} functor \[\Res=\Res_G^H\colon \Coh_G(M)\to \Coh_H(G)\quad, \quad (E,\lambda)\mapsto (E,\lambda_{\mid H})\] which restricts the $G$-linearisation of an equivariant sheaf to an $H$-linearisation. There is also the \emph{induction} functor $\Ind=\Ind_H^G\colon \Coh_H(M)\to \Coh_G(M)$ which is left- and right-adjoint to $\Res_G^H$. We have $\Ind_G^H(E,\lambda)=\bigoplus_{[g]\in G/H}g^*E$ with the linearisation given by a combination of the linearisation of $E$ and permutation of the factors. Induction and restriction commute with pull-backs and push-forwards along equivariant morphisms. In particular, for $f\colon M\to N$ a proper $G$-equivariant morphism, we have
\begin{equation}\label{eq:pushInd}
 f_*\circ \Ind_H^G\cong \Ind_H^G\circ f^*\,. 
\end{equation}

If $G$ acts trivially on $M$, a $G$-linearisation of a sheaf $E$ is the same as a $G$-action, i.e. a homomorphism $G\to \Aut(E)$. In this case, there is the functor $\triv\colon \Coh(M)\to \Coh_G(M)$ which equips every sheaf with the trivial linearisation, and the functor $(\_)^{G}\colon \Coh_G(M)\to\Coh(M)$ which takes the invariants under the $G$-action. These two functors are both-sided adjoints of each other.
Let $\pi\colon N\to N/G$ be a quotient morphism (of course, we mainly want to consider the case $\pi\colon X^{n}\to X^{(n)}=\sym_n$). Then, $\pi$ is $G$-equivariant when considering the quotient $N/G$ equipped with the trivial action. We have an adjoint pair $\pi^*\dashv \pi_*^{G}$, where $\pi^*:=\pi^*\circ \triv \colon \Coh(N/G)\to \Coh_{G}(N)$ (so $\triv$ is usually hidden in the notation) and $\pi_*^{G}:=(\_)^{G}\circ \pi_*\colon \Coh_{G}(N)\to \Coh(N/G)$. Since $\pi_*^G \reg_N\cong \reg_{N/G}$, projection formula gives
\begin{equation}\label{eq:projG}
 \pi_*^G\circ \pi^*\cong \id_{\Coh(N/G)}\,.
\end{equation}

Let still $G$ act trivially on $M$, and let $E$ be a $G$-equivariant sheaf. Assume that there is a direct sum decomposition $\bigoplus_{i\in I} E_i$ and a $G$-action on the index set $I$ such that $g\cdot E_i=E_{g\cdot i}$ for all $g\in G$ and $i\in I$. Let $i_1,\dots,i_k$ be a set of representatives of the $G$-orbits of $I$, and let $G_{i_j}$ be the isotropy groups. Then, the projection $E\to E_{i_1}\oplus\dots \oplus E_{i_k}$ induces an isomorphism
\begin{equation}\label{eq:Danila}
 E^G\xrightarrow\cong E_{i_1}^{G_{i_1}}\oplus \dots \oplus E_{i_k}^{G_{i_k}}\,.
\end{equation}
Its inverse $E_{i_1}^{G_{i_1}}\oplus \dots \oplus E_{i_k}^{G_{i_k}}\to E^G$ is given by
\begin{equation}\label{eq:Daninv}
(s_1,\dots,s_k)\mapsto \sum_{g\in G}g\cdot\iota\Bigl(\frac{s_1}{|G_{i_1}|},\dots , \frac{s_k}{|G_{i_k}|}\Bigr)= \sum_{j=1}^k \sum_{[g]\in G/G_{i_j}} g\cdot \iota_j(s_j) 
\end{equation}
where $\iota\colon E_{i_1}\oplus \dots \oplus E_{i_k}\to E$ and $\iota_j\colon E_{i_j}\to E$ are the inclusions of the direct summands. Note that we can apply this to the special case $M=\Spec \IC$ where $G$-equivariant sheaves are the same as $G$-representations. This will be used later to compute equivariant extension groups.

If $G$ acts transitively on $I$, we have $E\cong \Ind_{G_{i}}^G E_i$ for every $i\in I$. Hence, in this special case, \eqref{eq:Danila} gives an isomorphism of functors 
\begin{equation}\label{eq:IndDanila}
 (\_)^G\Ind_H^G\cong (\_)^H\colon \Coh_H(M)\to \Coh(M)
\end{equation}
for every subgroup $H\subset G$.

Given an action of a finite group on a smooth variety $M$, we write $\D_G(M):=\D^b(\Coh_G(M))$ for the bounded derived category of coherent $G$-equivariant sheaves on $G$. All of the functors discussed above can be derived to give functors between the equivariant derived categories. Note, however, that many of the functors, like $\Ind$, $\Res$, $(\_)^G$ and $\triv$ are already exact, so their derived versions are just given by term-wise application to complexes. For $E,F\in \Coh_G(M)$, which we can regard as complexes concentrated in degree zero, we have
\[
 \Ext^i_G(E,F)\cong \Hom_{\D_G(M)}(E,F)\,.
\]
Accordingly, we also write $\Ext^i_G(E^\bullet, F^\bullet):= \Hom_{\D_G(M)}(E^\bullet,F^\bullet)$ if $E^\bullet, F^\bullet \in \D_G(M)$ are proper complexes.

\section{Pull-back and push-forward along the quotient map}\label{sect:push-pull}

Throughout this section, let $n\ge 2$ be some fixed number. For this section, X can be any smooth curve and does not need to be projective. Let $\pi\colon X^n\to X^{(n)}$ be the quotient morphism. For $E\in \VB(X)$, we will see that there is an $\sym_n$-equivariant vector bundle $\CC(E)\in \VB_{\sym_n}(X^n)$ with $\pi_*^{\sym_n}\CC(E)\cong E^{[n]}$. We will also describe the pull-back $\pi^*E^{[n]}\in \VB_{\sym_n}(X^n)$. 

Let $\pr_i\colon X^n\to X$, $(x_1,\dots, x_n)\mapsto x_i$ be the projection to the $i$-th factor.
We consider the functor \[
             \CC:=\Ind_{\sym_{n-1}}^{\sym_n}\circ \pr^*_1\colon \VB(X)\to \VB_{\sym_n}(X^n)\,.
             \]
For this definition to make sense, we note that $\pr_1\colon X^n\to X$ is $\sym_{n-1}$-invariant, if we consider $\sym_{n-1}\cong \sym_{[2,n]}\le \sym_n$ as the subgroup of permutations which leave $1$ fixed. That means that $\CC(E)\cong \bigoplus_{i=1}^n\pr_i^*E$ and the linearisation $\lambda$ of $\CC(E)$ has the property that for $g\in \sym_n$ and $i\in [n]$, we have $\lambda_g(E_i)=g^*E_{g(i)}$. Furthermore, for a morphism $\phi\colon E\to F$ of vector bundles on $X$, we have $\CC(\phi)=\oplus_i \pr_i^*(\phi)\colon \CC(E)\to \CC(F)$. We can also describe the functor $\CC$ as the equivariant Fourier--Mukai transform
\[
 \CC\cong\FM_{\Ind_{\sym_{n-1}}^{\sym_n}\reg_{D_1}}\cong \FM_{\oplus_{i=1}^n \reg_{D_i}}\,,
\]
where $D_i=\{(x;x_1,\dots, x_n)\mid x=x_i\}\subset X\times X^n$.

\begin{prop}\label{prop:push}
There is an isomorphism of functors $(\_)^{[n]}\cong \pi_*^{\sym_n}\circ \CC\colon \D(X)\to \D(X^{(n)})$. 
\end{prop}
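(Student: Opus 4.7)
The plan is to factor the quotient map $\pi\colon X^n\to X^{(n)}$ through an intermediate $\sym_{n-1}$-quotient, identified with the universal family $\Xi$, and then to chain together the standard formulas \eqref{eq:projG}, \eqref{eq:pushInd}, \eqref{eq:IndDanila} so that both sides of the claimed isomorphism reduce to the common expression $(\pi_*\pr_1^*E)^{\sym_{n-1}}$.

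First I would identify $\Xi$ with $X\times X^{(n-1)}$ via the parametrisation used in the excerpt to define the embedding $X\times X^{(n-1)}\hookrightarrow X\times X^{(n)}$. Under this identification, the map
\[
 q := \id_X\times \pi_{n-1}\colon X^n = X\times X^{n-1}\longrightarrow X\times X^{(n-1)} = \Xi
\]
is the $\sym_{n-1}$-quotient map, where $\sym_{n-1}\cong \sym_{\{2,\dots,n\}}\le\sym_n$ is realised as the stabiliser of $1$. The two commutativities $\pr_X\circ q = \pr_1$ and $\pr_{X^{(n)}}\circ q = \pi$ are immediate from tracing through this identification, and verifying them is the only step that actually uses what the universal family $\Xi$ is.

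Next, for the left-hand side, applying \eqref{eq:projG} to the $\sym_{n-1}$-quotient $q$, together with the fact that $\sym_{n-1}$ acts trivially on both $\Xi$ and $X^{(n)}$ (so the ordinary pushforward $\pr_{X^{(n)}*}$ commutes with taking invariants), I would rewrite
\[
 \pr_{X^{(n)}*}\pr_X^*E \;\cong\; \pr_{X^{(n)}*}\, q_*^{\sym_{n-1}} q^*\pr_X^*E \;\cong\; (\pi_*\pr_1^*E)^{\sym_{n-1}}.
\]
For the right-hand side, \eqref{eq:pushInd} commutes $\pi_*$ past $\Ind_{\sym_{n-1}}^{\sym_n}$, and \eqref{eq:IndDanila} identifies $\sym_n$-invariants of the resulting induced sheaf with $\sym_{n-1}$-invariants of $\pi_*\pr_1^*E$, again landing on $(\pi_*\pr_1^*E)^{\sym_{n-1}}$. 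Matching the two chains yields the desired natural isomorphism.

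Since all functors involved ($\pr_X^*$, $\pr_1^*$, $q_*$, $\pi_*$, $\Ind$, $(\_)^G$) are exact on sheaves, the isomorphism passes automatically to the derived level $\D(X)\to\D(X^{(n)})$ without further work. I do not foresee a substantive obstacle: the only subtle point is the bookkeeping one of checking that the copy of $\sym_{n-1}$ that trivialises $\pr_1^*E$ really coincides, under the identification above, with the stabiliser-of-$1$ subgroup of $\sym_n$ appearing in the definition of $\CC$. Once that is pinned down, the proof is a formal two-line composition of standard adjunctions.
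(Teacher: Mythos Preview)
Your proposal is correct and follows essentially the same approach as the paper: factor $\pi$ through the $\sym_{n-1}$-quotient $q\colon X^n\to \Xi\cong X\times X^{(n-1)}$, then chain together \eqref{eq:projG}, \eqref{eq:IndDanila}, and \eqref{eq:pushInd} exactly as you describe. The paper presents this as a single chain of isomorphisms from $(\_)^{[n]}$ to $\pi_*^{\sym_n}\CC$, whereas you compute both sides separately and meet in the common expression $(\pi_*\pr_1^*E)^{\sym_{n-1}}$, but this is a purely cosmetic difference.
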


\begin{proof}
This should be well-known. For example, the case of line bundles is \cite[Prop.\ 1]{Mattuck--sym}. Anyway, let us do the proof as it is not long.
 We use the commutative diagram
 \[
\begin{xy}
\xymatrix{
& X^n \ar_{\pr_1}[ld]  \ar^{q_1}[d] \ar^\pi[rd] &    \\
 X  & \Xi=X\times X^{(n-1)} \ar^{\pr_X\qquad\quad}[l]\ar_{\quad\qquad\pr_{X^{(n)}}}[r] & X^{(n)} }
\end{xy} 
\]
Since $q_1$ is the $\sym_{n-1}$-quotient morphism, we get
\begin{align*}
(\_)^{[n]}\cong \pr_{X^{(n)}*} \pr_{X}^*\overset{\eqref{eq:projG}}\cong \pr_{X^{(n)}*} q_{1*}^{\sym_{n-1}}q_1^*\pr_{X}^*\cong (\_)^{\sym_{n-1}}\pi_*\pr_1^*&\overset{\eqref{eq:IndDanila}}\cong (\_)^{\sym_{n}}\Ind_{\sym_{n-1}}^{\sym_n}\pi_*\pr_1^* \\&\overset{\eqref{eq:pushInd}}\cong \pi_*^{\sym_{n}}\Ind_{\sym_{n-1}}^{\sym_n}\pr_1^*  \\&\cong \pi_*^{\sym_{n}}\CC\,.\qedhere
\end{align*}
\end{proof}

The functors in \autoref{prop:push} restrict to the categories of vector bundles so that we get a natural isomorphism $\pi_*^{\sym_n}\CC(E)\cong E^{[n]}$ for every $E\in \VB(X)$. 

\begin{lemma}\label{lem:pullFM}
We consider the reduced subscheme $D\subset X\times X^n$ given by 
\[
 D=\bigcup_{i=1}^n D_i=\bigl\{(x;x_1,\dots,x_n\mid x=x_i \text{ for some } i\in[n]\bigr\}\,. 
\]
Then, the following diagram is cartesian:
\begin{align}\label{eq:cdD}
\begin{CD}
D
@>{\pr_{X^n}}>>
X^n
\\
@V{\id_X\times \pi}VV
@VV{\pi}V
\\
\Xi
@>{\pr_{X^{(n)}}}>>
X^{(n)}\,.
\end{CD} 
\end{align}

\end{lemma}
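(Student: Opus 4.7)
The plan is to invoke the universal property of $X^{(n)}$ as the Hilbert scheme of $n$ points on $X$, with $\Xi$ its universal family. The argument consists of showing that $D$ is a relative effective Cartier divisor of degree $n$ on $X\times X^n\to X^n$ whose classifying morphism to $X^{(n)}$ is $\pi$, so that $D$ coincides with the scheme-theoretic pullback $(\id_X\times\pi)^{-1}(\Xi)=\Xi\times_{X^{(n)}}X^n$.

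First I would check the commutativity of the diagram set-theoretically: for $(x;x_1,\dots,x_n)\in D$ one has $x=x_i$ for some $i$, so both compositions land on the point $(x,x_1+\dots+x_n)\in \Xi\subset X\times X^{(n)}$.

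The main step is a local verification. Let $t$ be a coordinate on a suitable affine chart of $X$, and let $t_1,\dots,t_n$ be the induced coordinates on $X^n$. Then $D$ is cut out by the single equation $\prod_{i=1}^n(t-t_i)=0$. This element is monic in $t$ of degree $n$, so $\reg_D$ is a free $\reg_{X^n}$-module of rank $n$; and it is squarefree (a product of pairwise distinct irreducibles) in the UFD $\IC[t,t_1,\dots,t_n]$, so its vanishing locus is reduced, matching the reduced structure of $D$ in the statement. Hence $D$ is a relative effective Cartier divisor of degree $n$ on $X\times X^n$ over $X^n$. By Vieta's formulas, $\prod_i(t-t_i)=t^n-e_1t^{n-1}+\dots+(-1)^ne_n$, where $e_1,\dots,e_n$ are the elementary symmetric polynomials; since these are exactly the local coordinates on $X^{(n)}$ and the universal family $\Xi$ is locally cut out by $t^n-e_1t^{n-1}+\dots+(-1)^ne_n$, this computation shows $D=(\id_X\times\pi)^{-1}(\Xi)$. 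Equivalently, the classifying morphism $X^n\to X^{(n)}$ attached to the Cartier divisor $D$ agrees with $\pi$ on closed points, hence equals $\pi$; by the universal property of $\Xi$, this identifies $D$ with the pullback and proves the diagram is cartesian.

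The only potentially subtle point is ensuring that the pullback scheme structure matches the reduced structure of $D$ in the statement, but this is taken care of by the squarefreeness of $\prod_i(t-t_i)$ in the polynomial ring. Otherwise the proof is a straightforward local computation.
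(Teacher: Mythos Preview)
Your argument is correct, and it takes a genuinely different route from the paper's. The paper writes down the fibre product $F=\Xi\times_{X^{(n)}}X^n$ as an explicit subset of $X\times X^{(n-1)}\times X^n$, observes that the commutativity of the diagram gives a map $D\to F$, and then checks that this is an isomorphism by verifying it on closed points and arguing that both schemes are reduced: $D$ by definition, and $F$ because it is Cohen--Macaulay (being flat and finite over the smooth variety $\Xi$) and generically reduced. Your approach instead exploits the identification of $X^{(n)}$ with $\Hilb^n(X)$: you show that $D$ is a relative effective Cartier divisor of degree $n$ on $X\times X^n\to X^n$, identify its classifying morphism with $\pi$, and conclude $D=(\id_X\times\pi)^{-1}(\Xi)$. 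This is arguably more conceptual, and it bypasses the Cohen--Macaulay step entirely.

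One minor remark: your local computation is phrased as if $X$ were locally $\IA^1$, invoking the UFD $\IC[t,t_1,\dots,t_n]$. For a general smooth curve this is only true \'etale-locally (or formally), not Zariski-locally. This is harmless for your purposes: the reducedness of the divisor $\sum_i D_i$ follows already from the $D_i$ being pairwise distinct smooth divisors, and the identification of the classifying morphism with $\pi$ can be checked on the dense open where the $x_i$ are distinct (where the fibre $D_{(x_1,\dots,x_n)}$ is visibly the reduced subscheme $\{x_1,\dots,x_n\}$). But it would be worth tightening the wording so as not to suggest that a Zariski chart of $X$ is literally a polynomial ring in one variable.
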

\begin{proof}
Recall that $\Xi\cong X\times X^{(n-1)}$. As a subset of $X^n\subset X\times X^{(n-1)}\times X^n$, the fibre product $F:=\Xi \times_{X^{[n]}}X^n$ is given by
\[
 F=\bigl\{(x, x_2+\dots+x_n, y_1,\dots, y_n)\mid x+x_2+\dots+x_n=y_1+\dots+ y_n  \bigr\}\,.
\]
Since \eqref{eq:cdD} is commutative, we get an induced map $D\to F$. Let us show that this is an isomorphism with inverse given by $\pr_{X\times X^n\mid F}$ where $\pr_{X\times X^n}\colon X\times X^{(n-1)}\times X^n\to X\times X^n$ is the projection. It is easy to check that the composition of these morphisms in both directions equal the identities on $D$ and $F$, respectively, on $\IC$-valued points. If both $D$ and $F$ are reduced, this implies that the compositions equal the identities.

The subscheme $D\subset X\times X^n$ is reduced by definition. 
As $\pi\colon X^n\to X^{(n)}$ is flat and finite, the fibre product $F$ is flat and finite over the smooth variety $\Xi\cong X\times X^{(n-1)}$, hence Cohen--Macaulay; see \cite[Exe.\ 18.17]{Eisenbud--commutativebook}. It follows that $F$ is reduced since it is generically reduced; see \cite[Exe.\ 18.9]{Eisenbud--commutativebook}. 
\end{proof}

\begin{cor}
There is an isomorphism of functors $\pi^*\circ (\_)^{[n]}\cong \FM_{\reg_D}\colon \D(X)\to \D(X^{(n)})$. 
\end{cor}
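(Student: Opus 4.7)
The plan is to obtain the identification directly from the cartesian square of \autoref{lem:pullFM} by flat base change. First I would unwind the definition $(\_)^{[n]} \cong \pr_{X^{(n)}*} \circ \pr_X^*$ and apply $\pi^*$; since $\pi$ is flat (in fact finite flat by Chevalley--Shephard--Todd), flat base change along
\[
\begin{CD}
D @>\pr_{X^n}>> X^n \\
@V\id_X \times \pi VV @VV\pi V \\
\Xi @>\pr_{X^{(n)}}>> X^{(n)}
\end{CD}
\]
yields a canonical isomorphism $\pi^* \pr_{X^{(n)}*} \cong \pr_{X^n*} (\id_X \times \pi)^*$. Composing on the right with $\pr_X^*$ and noting that $\pr_X \circ (\id_X \times \pi)\colon D \to X$ coincides with the restriction to $D$ of the first-factor projection $X \times X^n \to X$, the standard rewriting of a Fourier--Mukai transform whose kernel is the structure sheaf of a closed subscheme identifies the result with $\FM_{\reg_D}$.

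For the equivariant enhancement, which is what is genuinely intended since $\pi^*$ lands in $\D_{\sym_n}(X^n)$, one observes that every morphism in the square is $\sym_n$-equivariant with respect to the trivial action on $\Xi$ and $X^{(n)}$ and the permutation action on $D \subset X \times X^n$ (trivial on the $X$-factor). Consequently $\reg_D$ carries its canonical $\sym_n$-linearisation and $\FM_{\reg_D}$ lifts naturally to a functor $\D(X) \to \D_{\sym_n}(X^n)$; flat base change respects these linearisations, giving the desired equivariant isomorphism.

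The one point requiring a brief check is that flat base change actually applies, but this is immediate from the flatness of $\pi$. Alternatively, one can bypass derived functors altogether: $\pr_{X^{(n)}}$ is finite and flat, so $\pr_{X^{(n)}*}$ is exact and commutes with the exact pullback $\pi^*$ at the level of coherent sheaves, and the identification extends termwise to complexes. I expect no real obstacle here; the whole argument is essentially a short chase through the cartesian square of \autoref{lem:pullFM}.
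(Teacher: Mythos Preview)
Your proposal is correct and follows precisely the same route as the paper: the paper's proof consists of the single sentence ``This follows from base change along the diagram \eqref{eq:cdD}.'' You have simply unpacked what this base change means in detail, including the identification of $\pr_X\circ(\id_X\times\pi)$ with the first projection restricted to $D$ and the reason flatness holds, and added a remark on the equivariant structure; no substantive difference from the paper's argument.
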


\begin{proof}
 This follows from base change along the diagram \eqref{eq:cdD}. 
\end{proof}

In order to describe the $\sym_n$-equivariant bundle $\pi^*E^{[n]}\in \VB_{\sym_n}(X^n)$ more concretely, we will define an $\sym_n$-equivariant complex $\CC^\bullet_E$ concentrated in degrees $0,\dots n-1$ as follows.  
We start by setting $\CC^0_E:=\CC(E)$.
For $2\le k\le n$, we consider the embedding 
\[\iota_{[k]}\colon X\times X^{n-k}\hookrightarrow X^n\quad,\quad \iota(x,x_1,\dots, x_{n-k})=(\underbrace{x,\dots, x}_{\text{$k$-times}}, x_1,\dots, x_{n-k})\,.\]
We note that 
$\iota_{[k]*}(E\boxtimes \reg_{X^{n-k}})$ carries a natural $\sym_k\times \sym_{n-k}$-linearisation where we consider $\sym_k\times \sym_{n-k}\cong \sym_{[k]}\times\sym_{[k+1,n]} \le \sym_n$ as the subgroup of permutations which fix the two blocks $[k]$ and $[k+1,n]$. We define
\[
\CC^{k-1}_E:=\Ind_{\sym_k\times \sym_{n-k}}^{\sym_n} \iota_{[k]*}\bigl((E\otimes \alt_k)\boxtimes \reg_{X^{n-k}}\bigr)
\]
where $\alt_k$ is the sign representation of $\sym_k$.
In order to give a more concrete description of $\CC^{k-1}_E$, for $I\subset [n]$ with $|I|=k$, we set $\iota_I=g\circ \iota_{[k]}\colon X\times X^{n-k}$ where $g\in \sym_n$ is a permutation with $g([k])=I$ such that $g_{\mid [n]\setminus I}$ is increasing. The image of $\iota_I$ is the \textit{$I$-th partial diagonal}
$\Delta_I=\bigl\{(x_1,\dots,x_n)\in X^n\mid x_i=x_j\,\forall\, i,j\in I\bigr\}$. We then have 
\begin{equation}\label{eq:EI}
\CC^{k-1}_E=\bigoplus_{I\subset [n]\,,\, |I|=k} E_I\quad,\quad E_I:=\iota_{I*}(E\boxtimes \reg_X)\,, 
\end{equation}
and its linearisation $\lambda$ satisfies $\lambda_g(E_I)=E_{g(I)}$ for $g\in \sym_n$, and $\lambda_{h\mid E_I}$ is multiplication by $\sgn(h)$ for $h\in \sym_I\le \sym_n$.

Finally, we define differentials $d^p\colon \CC^p_E\to \CC^{p+1}_E$ by 
\begin{equation}\label{eq:ddef}
(d^p(s))_I:=\sum_{i\in I}(-1)^{|\{j\in I\mid j<i\}|} s_{I\setminus\{i\}}\,. 
\end{equation}
Here, for a local section $s\in \CC^p_E$, we denote its component in $E_J$ by $s_J$. For $p=0$, we make sense of \eqref{eq:ddef} by setting $E_{\{j\}}:=\pr_j^*E$.

\begin{prop}\label{prop:epsres}
Let $E\in \VB(X)$, and let $\eps:=\eps_{\CC^0_E}\colon \pi_*\pi_*^{\sym_n}\CC^0_E\to \CC_E^0$ be  the counit of the adjunction $\pi^*\dashv \pi_*^{\sym_n}$.  Then, the following $\sym_n$-equivariant sequence is exact: 
\[
0\to \pi^*\pi_*^{\sym_n}\CC^0_E\xrightarrow{\eps} \CC^0_E\xrightarrow{d^0} \CC^1_E\xrightarrow{d^1} \dots\xrightarrow{d^{n-1}} \CC^{n-1}_E\to 0\,. 
\]
In particular, there is an isomorphism $\pi^*E^{[n]}\cong \CC^\bullet_E$ in $\D_{\sym_n}(X^n)$.
\end{prop}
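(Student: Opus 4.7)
My plan is to reduce the claim to a Mayer--Vietoris-style resolution of $\reg_D$ on $X \times X^n$, then transport it through the Fourier--Mukai description of $\pi^*(-)^{[n]}$ developed above. By \autoref{prop:push} and the preceding corollary, we already have $\pi^*\pi_*^{\sym_n}\CC^0_E \cong \pi^*E^{[n]} \cong \FM_{\reg_D}(E) = \pr_{X^n*}(\reg_D \otimes \pr_X^*E)$, where $D = \bigcup_{i=1}^n D_i \subset X \times X^n$. So it suffices to realise the augmented complex in the statement as the image under $\FM_E := \pr_{X^n*}(- \otimes \pr_X^*E)$ of a suitable resolution of $\reg_D$.

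The main technical step is to produce an $\sym_n$-equivariant exact sequence on $X \times X^n$
\[
0 \to \reg_D \to \bigoplus_{|I|=1} \reg_{D_I} \to \bigoplus_{|I|=2} \reg_{D_I} \to \dots \to \reg_{D_{[n]}} \to 0,
\]
where $D_I := \bigcap_{i\in I} D_i$ (scheme-theoretically), the differentials are the signed sums of restriction maps modelled on \eqref{eq:ddef}, and $\sym_n$ acts on $\bigoplus_{|I|=k}\reg_{D_I}$ by permuting indices combined with the sign character of $\sym_I$ on each summand. The sign twist is precisely what makes the Čech differential equivariant (a direct check, visible already for $n=2$). Exactness is a local question: at a point $p \in X \times X^n$ lying in precisely the divisors $D_i$ for $i \in I_p \subseteq [n]$, only summands with $I \subseteq I_p$ contribute nontrivial stalks. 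Locally near $p$, each such $D_i$ is cut out by a single regular function $f_i \in \reg_{X \times X^n, p}$, and these $f_i$ form a regular sequence because each partial intersection $D_I$ is smooth of the expected codimension $|I|$. The stalk complex is then the standard augmented Čech-type complex attached to a regular sequence, which is well known to be exact. Alternatively, the resolution can be built up inductively by splicing the Mayer--Vietoris sequences $0 \to \reg_{D' \cup D_n} \to \reg_{D'} \oplus \reg_{D_n} \to \reg_{D' \cap D_n} \to 0$ with $D' = D_1 \cup \dots \cup D_{n-1}$.

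Once this resolution is in place, I apply $\FM_E$ term by term. For each $I$, the restriction of $\pr_{X^n}$ to $D_I$ is an isomorphism onto the partial diagonal $\Delta_I \subset X^n$, so $\FM_E(\reg_{D_I}) \cong E_I$ and all higher direct images vanish; in particular $\FM_E$ is exact on our resolution and yields the desired sequence. A direct check identifies the signed Čech differentials with those in \eqref{eq:ddef}, and the sign-twisted $\sym_I$-action with the linearisation describing $\CC^{k-1}_E = \Ind_{\sym_k \times \sym_{n-k}}^{\sym_n}\iota_{[k]*}((E \otimes \alt_k) \boxtimes \reg_{X^{n-k}})$. Finally, the leftmost map $\pi^*E^{[n]} \to \CC^0_E$ produced by this construction coincides with the counit $\eps$: both arise, by naturality, from the same inclusion $\reg_D \hookrightarrow \bigoplus_i \reg_{D_i}$ under the Fourier--Mukai translation of the adjunction $\pi^*\dashv\pi_*^{\sym_n}$. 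The ``in particular'' assertion is then immediate.

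The main obstacle is the middle step: constructing the equivariant Čech resolution of $\reg_D$ with correct sign conventions and proving its exactness via the local regular-sequence argument. The rest follows formally from the Fourier--Mukai framework already set up in \autoref{prop:push} and the preceding corollary.
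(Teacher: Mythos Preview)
Your overall strategy matches the paper's: construct the $\sym_n$-equivariant \v{C}ech-type resolution of $\reg_D$ on $X\times X^n$, then apply the Fourier--Mukai functor $\FM_{(-)}(E)$ termwise. Your local regular-sequence argument for exactness is fine (the paper simply cites the surface case in \cite[Rem.~2.2.1]{Sca1}), and the identification $\FM_E(\reg_{D_I})\cong E_I$ is straightforward.

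The gap is in your final step, where you assert that the leftmost map $\pi^*E^{[n]}\to \CC^0_E$ produced by the resolution coincides with the counit $\eps$ ``by naturality''. This is precisely the point the paper flags (see \autoref{rem:curvesurf}) as not appearing in the literature and requiring a separate argument, and your sentence does not supply one. Knowing that $\pi^*E^{[n]}\cong\FM_{\reg_D}(E)$ and that there is \emph{some} injection into $\CC^0_E$ coming from the resolution does not by itself tell you that this injection agrees with the adjunction counit; a priori these could be two different maps $\pi^*E^{[n]}\to\CC^0_E$. What is needed is an argument on the level of Fourier--Mukai kernels: the paper invokes \cite[App.~A]{CW} to say that the counit $\pi^*\pi_*^{\sym_n}\to\id$ is itself induced by a morphism of kernels, hence after composing with $\CC=\FM_{\oplus_i\reg_{D_i}}$ the counit $\eps_{\CC^0_E}$ comes from \emph{some} $\sym_n$-equivariant map $\reg_D\to\oplus_i\reg_{D_i}$. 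One then checks that, up to scalar, the only such equivariant map is the natural inclusion. Without this (or an equivalent) argument, the identification of the augmentation with $\eps$ is unjustified --- and that identification is exactly what is used downstream in \autoref{prop:Extiso}.
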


\begin{proof}
For $I\subset [n]$, we set $D_I:=\cap_{i\in I} D_i$.
Since the irreducible components $D_i$ of $D$ intersect transversely, we get an $\sym_n$-equivariant resolution 
\begin{align}\label{eq:ODres}
0\to \reg_D\to \bigoplus_{i=1}^n \reg_{D_i} \to \bigoplus_{|I|=2}\reg_{D_I}\to \dots \to \reg_{D_{[n]}}\to 0 
\end{align}
of $\reg_D$ with the differentials and the $\sym_n$-linearisations analogous to those of $\CC^\bullet_E$. This is exactly as in the surface case, where details can be found in \cite[Rem.\ 2.2.1]{Sca1}.
We have 
\[
\FM_{\bigoplus_{|I|=p+1}\reg_{D_I}}(E)\cong \CC^p_E\,.
\]
Hence, \eqref{eq:ODres} induces a short exact sequence
\begin{equation} \label{eq:adjointresolution}
0\to \FM_{\reg_D}(E)\to  \CC^0_E\to \CC^1_E\to \dots\to \CC^{n-1}_E\to 0\,. 
\end{equation}
This is also exactly as in the surface case; see  \cite[Thm.\ 2.2.3]{Sca1} or \cite[Thm.\ 16]{Sca2} or \cite[Rem.\ 2.10]{Krug--remarksMcKay} for details.

By \autoref{prop:push} and \autoref{lem:pullFM}, we have an isomorphism $\FM_{\reg_D}(E)\cong \pi^*\pi_*^{\sym_n}\CC^0_E$. So it is left to show that, under this isomorphism, the first map in \eqref{eq:adjointresolution} agrees, up to a scalar multiple, with the counit of adjunction $\eps$. Note that both functors, $\pi_*^{\sym_n}$ and $\pi^*$, are Fourier--Mukai transforms. Hence, by \cite[App.\ A]{CW}, the counit $\pi^*\circ \pi_*^{\sym_n}\to \id_{\D_{\sym_n}}(X^n)$ is induced by a map between the Fourier--Mukai kernels. Hence, after precomposition by the functor $\CC=\FM_{\oplus_i\reg_{D_i}}$, the counit $\eps\colon \pi^*\circ \pi_*^{\sym_n}\circ \CC\to  \CC$ is still induced by some $\sym_n$-equivariant map $\reg_D\to \oplus_{i=1}^n\reg_{D_i}$ between the Fourier--Mukai kernels. However, one easily checks that, up to scalar multiples, the only $\sym_n$-equivariant map $\reg_D\to \oplus_{i=1}^n\reg_{D_i}$ is the natural injection given by the restriction map to every component. This is the same as the   
the first map in \eqref{eq:ODres} and hence it induces the first map of \eqref{eq:adjointresolution}.

The quasi-isomorphism $\pi^*E^{[n]}\cong \CC^\bullet_E$ now comes from the isomorphism $\pi^*E^{[n]}\cong \pi^*\pi_*^{\sym_n}\CC^0_E$, see \autoref{prop:push}, together with the fact, which we just proved, that $\CC^\bullet_E$ is an equivariant resolution of $\pi^*\pi_*^{\sym_n}\CC^0_E$.
\end{proof}

\begin{remark}\label{rem:curvesurf}
The results of this subsection are analogous to the surface case. For $S$ a smooth quasi-projective surface, tautological bundles on the Hilbert scheme $S^{[n]}$ of points on the surface are still defined by means of the Fourier--Mukai transform along the universal family. A crucial difference is that, in the surface case, the Hilbert--Chow morphism $\mu\colon S^{[n]}\to S^{(n)}$ from the Hilbert scheme of point to the symmetric product is \textit{not} an isomorphism. However, one can consider the commutative diagram 
\begin{equation}\label{eq:Haiman}
\begin{CD}
I^nS
@>{p}>>
S^n
\\
@V{q}VV
@VV{\pi}V
\\
S^{[n]}
@>{\mu}>>
S^{(n)}\,.
\end{CD} 
\end{equation}
where $I^nS:=(S^n\times_{S^{(n)}}S^{[n]})_{\mathsf{red}}$. Then, instead of the pull-back and push forward along $\pi$, one considers the \textit{derived McKay correspondences} 
\[
 \Phi:=Rp_*\circ q^*\colon \D(S^{[n]})\to \D_{\sym_n}(S^n)\quad,\quad \Psi:= q_*^{\sym_n}\circ Lp^*\colon \D_{\sym_n}(S^n)\to \D(S^{[n]})\,.
\]
For $E\in \VB(S)$, the equivariant bundle $\CC(E)=\CC^0_E$ and the complex $\CC^\bullet_E$ can then be defined in the exact same way as in the curve case above, and we have $\Psi(\CC(E))\cong E^{[n]}$ and $\Phi(E^{[n]})\cong \CC^\bullet_E$; see \cite{Sca1,Sca2, Krug--remarksMcKay}. The proofs we did in this section are very similar to the proofs in \cite{Sca1,Sca2, Krug--remarksMcKay}, with the main difference being that some steps become easier in the curve case because the diagram \eqref{eq:Haiman} collapses in the sense that the left and right side are identified. Instead of reproducing the proofs in the curve case, one can also deduce them from the surface case by a base change argument. However, we needed to go through parts of the argument again, since the statement that the augmentation map in \autoref{prop:epsres} is given by the counit of adjunction does not appear in any of the references on the surface case. We need that statement for the proof of \autoref{prop:Extiso} below.

While some things become easier in the curve case due to the isomorphism $X^{[n]}\cong X^{(n)}$, we will see that the description of the extension groups of tautological bundles becomes actually more complicated in the curve case compared to the surface case. In the surface case, we have the simple formula
\[
 \Ext^*_{S^{[n]}}(E^{[n]},F^{[n]})\cong \Ext^*(E,F)\otimes S^{n-1}\Ho^*(\reg_X)\oplus \Ho^*(E^\vee)\otimes \Ho^*(E)\otimes S^{n-2}\Ho^*(\reg_X)\,\,;
\]
see \cite{KruExt, Krug--remarksMcKay}. In the curve case, the extension groups $\Ext^*(E^{[n]},F^{[n]})$ depend on more input data than just $\Ext^*(E,F)$ like certain Koszul cohomology groups. The reason for the complications in the curve case is that, while the functors $\Phi$ and $\Psi$ in the surface case are equivalences (though not mutually inverse), the functors $\pi^*$ and $\pi_*^{\sym_n}$ in the curve case are not equivalences (at least $\pi^*\colon \D(X^{(n)})\to \D_{\sym_n}(X^n)$ is fully faithful, but $\pi_*^{\sym_n}$ is not).   
\end{remark}

\begin{remark}
 Another result in \cite{Krug--remarksMcKay} is the formula $\Phi(\WW^k(L))\cong \wedge ^kL^{[n]}$, for every $L\in \Pic S$, where $\WW^k(L)=\Ind_{\sym_k\times \sym_{n-k}}^{\sym_n}\bigl((L^{\boxtimes k}\otimes \alt_k)\boxtimes \reg_S^{\boxtimes n-k}\bigr)$. Also this has an analogue in the curve case:
\begin{equation}\label{eq:WL}\pi_*^{\sym_n}\WW^k(L)\cong \wedge ^kL^{[n]}\quad\text{for $L\in \Pic X$.}\end{equation}
Again, one can either deduce this from the surface case by a base change argument, or imitate the proof from the surface case. The $k=n$ case of \eqref{eq:WL} can also be found in \cite[Prop.\ 3.2]{Sheridan--symmprod}. By \eqref{eq:WL} and \eqref{eq:IndDanila}, we get the formula
\[
 \Ho^*(\wedge^kL^{[n]})\cong \Ho^*_{\sym_n}(\WW^kL)\cong\Ho^*_{\sym_k\times\sym_{n-k}}\bigl((L^{\boxtimes k}\otimes \alt_k)\boxtimes \reg_S^{\boxtimes n-k}\bigr)\cong \wedge^k\Ho^*(L)\otimes S^{n-k}\Ho^*(\reg_X)\,.
\]
For the details of the computation behind the last two isomorphisms; see the proof of \cite[Prop.\ 4.1]{Krug--remarksMcKay}, or see the proof of \autoref{lem:dual} below for a similar computation. 
\end{remark}

\section{Extension groups of tautological bundles}\label{sect:technicalmain}

In this section, let $E,F\in \VB(X)$ be two vector bundles on a smooth projective curve $X$. 

\begin{prop}\label{prop:Extiso}
There is a natural isomorphism 
\[
\alpha\colon \Ext^*_{\sym_n}(C^\bullet_E, C^0_F)\xrightarrow{\cong} \Ext^*(E^{[n]},F^{[n]})  
 \]
such that the following diagram commutes:
\begin{align}\label{eq:cdExt}
\begin{CD}
\Ext_{\sym_n}^*(\CC^0_E,\CC^0_F)
@>{\pi_*^{\sym_n}}>>
\Ext^*(\pi_*^{\sym_n}\CC^0_E,\pi_*^{\sym_n}\CC^0_F)
\\
@V{\tau^*}VV
@VV{\widehat\beta}V
\\
\Ext_{\sym_n}^*(\CC^\bullet_E,\CC^0_F)
@>{\alpha}>>
\Ext_*(E^{[n]},F^{[n]})\,.
\end{CD} 
\end{align}
Here, $\tau \colon \CC^\bullet_E\to \CC^0_E$ is the truncation map, and $\hat\beta=\beta_F^{-1}\circ \_\circ \beta_E$ where $\beta\colon (\_)^{[n]}\xrightarrow\cong \pi_*^{\sym_n}\circ \CC$ is the isomorphism of functors of \autoref{prop:push}.
\end{prop}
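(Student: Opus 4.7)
The plan is to construct $\alpha$ as a composite of three canonical isomorphisms and then to verify the commutativity of \eqref{eq:cdExt} by a short diagram chase that exploits the triangle identities of the adjunction $\pi^*\dashv \pi_*^{\sym_n}$.

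First, combining \autoref{prop:epsres} with \autoref{prop:push} yields a canonical quasi-isomorphism $\sigma\colon \pi^*E^{[n]}\xrightarrow{\sim}\CC^\bullet_E$ in $\D_{\sym_n}(X^n)$, namely the counit $\varepsilon_{\CC^0_E}\colon\pi^*\pi_*^{\sym_n}\CC^0_E\to \CC^0_E$ precomposed with $\pi^*\beta_E$. Precomposition with $\sigma$ gives an iso $\sigma^*\colon\Ext^*_{\sym_n}(\CC^\bullet_E,\CC^0_F)\xrightarrow{\cong}\Ext^*_{\sym_n}(\pi^*E^{[n]},\CC^0_F)$. Next, since $\pi$ is finite and we work in characteristic zero, both $\pi^*$ and $\pi_*^{\sym_n}$ are exact, so the adjunction passes to the derived categories and supplies an iso $\Ext^*_{\sym_n}(\pi^*E^{[n]},\CC^0_F)\cong \Ext^*(E^{[n]},\pi_*^{\sym_n}\CC^0_F)$. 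Finally, post-composition with $\beta_F^{-1}$ identifies the target with $\Ext^*(E^{[n]},F^{[n]})$. I will define $\alpha$ to be the composite of these three canonical isomorphisms.

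To check commutativity of \eqref{eq:cdExt}, I will pick a class $f\colon \CC^0_E\to \CC^0_F$ and track it along both paths. Since the truncation $\tau$ is the identity in degree $0$, one has $\tau\circ\sigma=\varepsilon_{\CC^0_E}\circ\pi^*\beta_E$, so $\tau^*(f)\circ\sigma=f\circ\varepsilon_{\CC^0_E}\circ\pi^*\beta_E$. Passing through the adjunction and using naturality of the unit $\eta$, this is carried to $\pi_*^{\sym_n}(f)\circ \pi_*^{\sym_n}(\varepsilon_{\CC^0_E})\circ \eta_{\pi_*^{\sym_n}\CC^0_E}\circ \beta_E$, which collapses via the triangle identity $\pi_*^{\sym_n}(\varepsilon)\circ \eta=\id$ to $\pi_*^{\sym_n}(f)\circ \beta_E$. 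Post-composition with $\beta_F^{-1}$ then produces $\beta_F^{-1}\circ \pi_*^{\sym_n}(f)\circ \beta_E=\widehat\beta(\pi_*^{\sym_n}(f))$, which is precisely the image of $f$ along the right-then-down path of \eqref{eq:cdExt}.

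The main subtlety --- and the reason for the specific form of $\alpha$ --- is the explicit identification of the augmentation map in the resolution $\CC^\bullet_E$ with the adjunction counit, carried out at the end of the proof of \autoref{prop:epsres}. Without pinning $\sigma$ down as a counit, the triangle identity could not be invoked and \eqref{eq:cdExt} would at best commute up to an a priori uncontrolled natural automorphism of $\Ext^*(E^{[n]},F^{[n]})$; so that auxiliary assertion in \autoref{prop:epsres} is not a stylistic refinement but an essential ingredient here.
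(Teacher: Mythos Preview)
Your proof is correct and follows essentially the same route as the paper: both build $\alpha$ from the quasi-isomorphism of \autoref{prop:epsres} together with the $\pi^*\dashv\pi_*^{\sym_n}$ adjunction isomorphism and the identifications $\beta$, and both deduce the commutativity of \eqref{eq:cdExt} from $\tau\circ\sigma=\varepsilon_{\CC^0_E}\circ\pi^*\beta_E$ and the triangle identity. The only cosmetic difference is that you fold $\pi^*\beta_E$ into $\sigma$ at the outset, whereas the paper keeps the counit $\tilde\varepsilon$ undecorated and applies the full $\widehat\beta$ at the end.
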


\begin{proof}
By \autoref{prop:epsres}, the counit of adjunction $\eps_{\CC^0_E}\colon \pi^*\pi_*^{\sym_n}\CC^0_E\to \CC^0_E$ induces a quasi-isomorphism $\tilde \eps\colon \pi^*\pi_*^{\sym_n}\CC^0_E\to \CC^\bullet_E$. We define the isomorphism $\alpha$ as the composition
\begin{align*}
\Ext^*_{\sym_n}(\CC^\bullet_E, \CC^0_F)\xrightarrow{\tilde\eps^*} \Ext^*_{\sym_n}(\pi^*\pi_*^{\sym_n}\CC^0_E, C^0_F) \xrightarrow{\pi_*^{\sym_n}} \Ext^*(\pi_*^{\sym_n}\pi^*\pi_*^{\sym_n}\CC^0_E, \pi_*^{\sym_n}C^0_F)\\ \xrightarrow{(\eta_{\pi_*^{\sym_n} \CC_E^0})^*} \Ext^*(\pi_*^{\sym_n}\CC^0_E, \pi_*^{\sym_n}C^0_F)\xrightarrow{\widehat\beta} \Ext^*(E^{[n]}, E^{[n]})
\end{align*}
where $\eta\colon \id_{\D(X^{(n)})}\to \pi_*^{\sym_n}\pi^*$ is the unit of adjunction.
The first map of this composition is an isomorphism since $\tilde\eps$ is an quasi-isomorphism. The last map is an isomorphism since $\beta$ is an isomorphism. The middle part $\eta^*\circ \pi_*^{\sym_n}$ is the standard adjunction isomorphism. Hence, $\alpha$ is indeed an isomorphism.

The commutativity of the diagram \eqref{eq:cdExt} now follows from the identity $\tau\circ \tilde \eps=\eps_{\CC^0_E}$ and the general unit-counit identity $(\eta \pi_*^{\sym_n})\circ (\pi_*^{\sym_n} \eps)\cong \id_{\pi_*^{\sym_n}}$.
\end{proof}

\begin{prop}\label{prop:spectralseq}
 There is a spectral sequence 
\[
\IE^1_{p,q}=\Ext^q_{\sym_n}(C^{-p}_E,C^0_F) \quad\Longrightarrow \quad \IE^{p+q}= \Ext^{p+q}(E^{[n]},F^{[n]})\,.
\]
concentrated in the second quadrant. The differentials on the 1-level are given by 
\[
(d^{p*}_E)^{\sym_n} \colon \IE^1_{-p-1,q}=\Ext^q_{\sym_n}(\CC_E^{p+1},\CC^0_F)\to \IE^1_{-p,q}=\Ext^q_{\sym_n}(\CC_E^{p},\CC^0_F)  
\]
where $d^p_E\colon \CC^p_E\to \CC^{p+1}_E$ is the differential of the complex $\CC^\bullet_E$ as defined in \eqref{eq:ddef}.
The edge morphisms of the spectral sequence 
\[
\Ext^q(\CC^0_E,\CC^0_F)=\IE^1_{0,q} \twoheadrightarrow \IE^\infty_{0,q}\hookrightarrow \IE^q\cong \Ext^{q}(E^{[n]},F^{[n]}) 
\]
are given by $\widehat \beta\circ \pi_*^{\sym_n}$ 
\end{prop}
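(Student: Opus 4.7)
The plan is to realize the sought spectral sequence as the standard hyperext spectral sequence in the first variable associated to the bounded complex $\CC^\bullet_E$, combined with the identification from \autoref{prop:Extiso}. Concretely, \autoref{prop:Extiso} already translates $\Ext^*_{\sym_n}(\CC^\bullet_E,\CC^0_F)$ into $\Ext^*(E^{[n]},F^{[n]})$ via the isomorphism $\alpha$, so it suffices to produce a spectral sequence with the prescribed $\IE^1$-page abutting to the former.

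For the construction, I would pick a $\sym_n$-equivariant injective resolution $\CC^0_F\to I^\bullet$ (available in the Grothendieck category $\QCoh_{\sym_n}(X^n)$, which suffices for computing $\Ext$-groups into a coherent target) and form the second-quadrant double complex
\[
A^{p,q}=\Hom_{\sym_n}(\CC^{-p}_E, I^q),\qquad -(n-1)\le p\le 0,\ \ q\ge 0,
\]
whose vertical differential is induced by $I^\bullet$ and whose horizontal differential is induced via pullback by the $\sym_n$-equivariant differential $d_E^{-p-1}\colon \CC^{-p-1}_E\to \CC^{-p}_E$ of $\CC^\bullet_E$. Since $I^\bullet$ is injective, the total complex of $A^{\bullet,\bullet}$ computes $\Ext^*_{\sym_n}(\CC^\bullet_E,\CC^0_F)$. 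Taking vertical cohomology first then yields the prescribed $\IE^1$-page
\[
\IE^1_{p,q}=H^q(\Hom_{\sym_n}(\CC^{-p}_E, I^\bullet))=\Ext^q_{\sym_n}(\CC^{-p}_E,\CC^0_F),
\]
with $d_1$ the pullback $(d_E^{-p-1})^*$ restricted to $\sym_n$-invariants. The second-quadrant support is immediate from the bounded range of $p$, and the convergence to $\Ext^{p+q}(E^{[n]},F^{[n]})$ follows by composing the abutment with $\alpha$.

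For the edge morphism, the column filtration on the total complex induces a filtration on the abutment whose top subquotient $\IE_\infty^{0,q}$ is the image of the $p=0$ column under the composite $\IE_1^{0,q}\twoheadrightarrow \IE_\infty^{0,q}\hookrightarrow \Ext^q_{\sym_n}(\CC^\bullet_E,\CC^0_F)$. This composite is canonically identified with the pullback $\tau^*$ along the truncation map $\tau\colon \CC^\bullet_E\to \CC^0_E$ arising from the short exact sequence $0\to \sigma^{\ge 1}\CC^\bullet_E\to \CC^\bullet_E\to \CC^0_E\to 0$ of complexes. Composing with $\alpha$ and invoking the commutative diagram \eqref{eq:cdExt} of \autoref{prop:Extiso} gives $\alpha\circ\tau^*=\widehat\beta\circ\pi_*^{\sym_n}$, which is precisely the claimed description of the edge map.

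The main subtlety is that $\alpha$ was constructed in \autoref{prop:Extiso} using the quasi-isomorphism $\tilde\eps\colon \pi^*\pi_*^{\sym_n}\CC^0_E\xrightarrow\sim \CC^\bullet_E$ rather than the truncation $\tau$, so one might initially fear a discrepancy between the spectral sequence's edge (which is naturally $\tau^*$) and the desired description via $\widehat\beta\circ\pi_*^{\sym_n}$. This potential obstacle is neutralized by the fact that the compatibility diagram \eqref{eq:cdExt} was incorporated into \autoref{prop:Extiso} precisely to make this identification automatic, so no further verification is required.
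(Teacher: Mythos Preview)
Your proposal is correct and follows essentially the same approach as the paper: construct the hyperext spectral sequence from the double complex $\Hom_{\sym_n}(\CC^{-p}_E,I^q)$ with $I^\bullet$ an equivariant injective resolution of $\CC^0_F$, identify its edge morphism with $\tau^*$, and then transport the abutment along $\alpha$ so that \eqref{eq:cdExt} yields the asserted description $\widehat\beta\circ\pi_*^{\sym_n}$. Your write-up in fact spells out slightly more detail than the paper does (the explicit form of the double complex and the identification of the edge map via the truncation), but the strategy is identical.
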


\begin{proof}
 There is the hyperext spectral sequence
\[
\IE^1_{p,q}=\Ext^q_{\sym_n}(C^{-p}_E,C^0_F) \quad\Longrightarrow \quad \IE^{p+q}= \Ext^{p+q}(C^\bullet_E,C^0_F)\,,
\]
whose edge morphisms are given by $\tau^*\colon \Ext^q_{\sym_n}(C^{0}_E,C^0_F) \to \Ext^{q}(C^\bullet_E,C^0_F)$. It is constructed as the spectral sequent associated to the double complex $\Hom^\bullet(C^\bullet_E,I^\bullet)$ by taking some injective $\sym_n$-equivariant resolution $I^\bullet$ of $C^0_F$.

Now, we replace $\IE^{p+q}= \Ext^{p+q}(C^\bullet_E,C^0_F)$ by $\IE^{p+q}=\Ext^{p+q}(E^{[n]},F^{[n]})$ using the isomorphism $\alpha$. Then, by \autoref{prop:Extiso}, the edge morphisms are of the desired form. 
\end{proof}

\subsection{A closer look at the terms of the spectral sequence}

We now express the terms  of the spectral $\IE^1_{p,q}=\Ext^q_{\sym_n}(C^{-p}_E,C^0_F)$ as products and sums of extension spaces of bundles on the curve $X$. We will first compute $(\IE^1_{p,q})^\vee$ and then apply Serre duality to get a formula for $\IE^1_{p,q}$.

The $\sym_n$-action on $X^n$ induces, by pull-back of $n$-forms, a $\sym_n$-linearisation on the canonical bundle $\omega_{X^n}$. We denote the canonical bundle equipped with this linearisation by $\omega_{[X^n/\sym_n]}$. This notation comes from the fact that it is the canonical line bundle of the associated quotient stack. Under the isomorphism $\omega_{X^n}\cong \omega_X^{\boxtimes n}$, the above linearisation differs from the one which permutes the box-factors by the sign-representation. In other words, $\omega_{[X^n/\sym_n]}\cong \omega_X^{\boxtimes n}\otimes \alt_n$; see \cite[Lem.\ 5.10]{KSequi}.

\begin{lemma}\label{lem:dual}
For every $p=0,\dots, n-1$, there are natural isomorphisms
\begin{align*}
&\Ext_{\sym_n}^*(\CC^p_E,\CC_F^0)^\vee[-n]\\\cong &\Ext^*_{\sym_n}(\CC^0_F,\CC^p_E\otimes \omega_X^{\boxtimes n}\otimes \alt_n)\\\cong& \left(\Ext^*(F, E\otimes \omega_X^{p+1})\otimes \wedge^{n-p-1}\Ho^*(\omega_X)\right)\oplus \left(\Ext^*(F,\omega_X)\otimes \Ho^*(E\otimes \omega_X^{p+1})\otimes \wedge^{n-p-2}\Ho^*(\omega_X)\right)\,.
\end{align*}
In the case $p=n-1$, the second summand vanishes.
\end{lemma}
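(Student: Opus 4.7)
The first isomorphism is $\sym_n$-equivariant Serre duality on the smooth projective variety $X^n$, using the identification $\omega_{[X^n/\sym_n]} \cong \omega_X^{\boxtimes n} \otimes \alt_n$ recalled immediately above the lemma.

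For the second isomorphism I would proceed by three successive adjunction-and-decomposition moves. First, using $\CC^0_F = \Ind_{\sym_{n-1}}^{\sym_n} \pr_1^* F$ with $\sym_{n-1}$ realised as $\sym_{[2,n]}$, Frobenius reciprocity converts the target into
\[
\Ext^*_{\sym_{n-1}}\bigl(\pr_1^* F,\ \Res(\CC^p_E \otimes \omega_X^{\boxtimes n} \otimes \alt_n)\bigr).
\]
Second, decompose $\CC^p_E = \bigoplus_{|I|=p+1} E_I$ as an $\sym_{n-1}$-equivariant sheaf. The $\sym_{n-1}$-action on the indexing set has exactly two orbits, distinguished by whether $1 \in I$. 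I would take representatives $I_1 = [p+1]$ (containing $1$, with stabiliser $\sym_{[2,p+1]}\times\sym_{[p+2,n]}\cong\sym_p\times\sym_{n-p-1}$) and $I_2 = [2,p+2]$ (not containing $1$, with stabiliser $\sym_{[2,p+2]}\times\sym_{[p+3,n]}\cong\sym_{p+1}\times\sym_{n-p-2}$). The isomorphism \eqref{eq:IndDanila} then splits the Ext into a sum of two Ext-spaces taken over these stabilisers; note that the second orbit is empty precisely when $p = n-1$, giving the claimed vanishing.

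Third, in each case I would apply the adjunction $\iota_{I_j}^* \dashv \iota_{I_j*}$ to move the target to the source $X \times X^{n-p-1}$; no derived correction is needed because $\pr_1^* F$ is locally free. The crucial ingredient is reading off $\iota_{I_j}^*\pr_1^* F$: for $I_1$ the composition $\pr_1\circ\iota_{I_1}$ is projection onto the distinguished ``glued'' $X$, so the pull-back is $F \boxtimes \reg$; for $I_2$ the composition $\pr_1\circ\iota_{I_2}$ is projection onto the first coordinate of the ``free'' $X^{n-p-1}$-factor, so the pull-back is $\reg \boxtimes F \boxtimes \reg^{\boxtimes n-p-2}$. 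In both cases $\iota_{I_j}^*\omega_X^{\boxtimes n} = \omega_X^{p+1} \boxtimes \omega_X^{\boxtimes n-p-1}$, and the K\"unneth formula then produces the predicted tensor product of extension and cohomology groups on $X$.

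The main obstacle is the equivariant bookkeeping of sign characters. On each K\"unneth factor one must combine two sign-twists---the $\alt_{p+1}$ built into the definition of $\CC^p_E$ on $\sym_{I_j}$, and the $\alt_n$ coming from Serre duality---and check that on the ``glued'' $X$-factor they cancel (so that full invariants of $\Ext^*(F, E\otimes\omega_X^{p+1})$, respectively $\Ho^*(E\otimes\omega_X^{p+1})$, survive), while on the factors carrying $\Ho^*(\omega_X)$ the leftover sign combines with the natural permutation action to extract the wedge powers $\wedge^{n-p-1}\Ho^*(\omega_X)$ and $\wedge^{n-p-2}\Ho^*(\omega_X)$. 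A useful simplifying remark is that the permutation action of $\sym_{I_j}$ on the tensor power $\omega_X^{\otimes(p+1)}$ obtained by restricting $\omega_X^{\boxtimes n}$ to the diagonal is trivial, because permutations act trivially on tensor powers of $1$-dimensional fibres; this separates the sign arithmetic cleanly from the underlying permutation representations. Once this is set up, all remaining steps are formal invocations of adjunctions, Danila-type invariants of induced sheaves, and K\"unneth.
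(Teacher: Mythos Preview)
Your proposal is correct and essentially the same as the paper's proof. The only organizational difference is that you apply Frobenius reciprocity first (on the source $\CC^0_F=\Ind_{\sym_{n-1}}^{\sym_n}\pr_1^*F$) and then Danila's lemma on the remaining $I$-index, whereas the paper applies Danila's lemma \eqref{eq:Danila} in a single step to the combined index set $\{(i,I)\}$; the resulting orbit representatives, stabilisers, sign bookkeeping, and K\"unneth computation are identical.
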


\begin{proof}
 The first isomorphism is equivariant Serre duality; see e.g.\ \cite[Sect.\ 4.3]{BKR}. 
Looking at the definition of the terms of the complex $\CC^\bullet_E$ in \autoref{sect:push-pull}, we have 
 \begin{align}\label{eq:Extsum}
  \Ext^*(\CC^0_F,\CC^p_E\otimes \omega_X^{\boxtimes n}\otimes \alt_n)\cong \bigoplus_{\substack{i=1,\dots, n\\I\subset[n]\,,\, |I|=p+1}}\Ext^*(\pr_i^* F,E_I\otimes \alt_I\otimes \omega_X^{\boxtimes n}\otimes \alt_n)\,.
 \end{align}
The $\sym_n$-action on the Ext-space induced by the linearisations of $\CC^0_F$ and $\CC^p_E$ has the property that $g\in \sym_n$ maps the direct summand indexed by $(i,I)$ to the one indexed by $(g(i),g(I))$. Hence, the action on the index set of the direct sum \eqref{eq:Extsum} has two orbits, represented by $(1,[p+1])$ and $(1,[2,p+2])$ (except in the case $p=n-1$ where the second orbit does not exist). The stabilisers of these two representatives are 
\[G_1:=\sym_{[2, p+1]}\times \sym_{[p+2,n]}\quad,\quad  G_2:=\sym_{[2,p+2]}\times \sym_{[p+3,n]}\,.\]
Hence, by \eqref{eq:Danila}, the $\sym_n$-invariants of \eqref{eq:Extsum} are computed as 
\begin{align}
\Ext^*_{\sym_n}(\CC^0_F,\CC^p_E\otimes \omega_X^{\boxtimes n}\otimes \alt_n)\cong \begin{matrix}\Ext^*(\pr_1^* F,E_{[p+1]}\otimes \alt_{[p+1]}\otimes \omega_X^{\boxtimes n}\otimes \alt_n)^{G_1}\\\oplus\\ \Ext^*(\pr_1^* F,E_{[2,p+2]}\otimes \alt_{[2,p+2]}\otimes \omega_X^{\boxtimes n}\otimes \alt_n)^{G_2}\end{matrix}\label{eq:twosummands}
\end{align}
To compute the first direct summand, we note that
\[
 \sHom(\pr_1^* F,E_{[p+1]}\otimes \alt_{[p+1]}\otimes \omega_X^{\boxtimes n}\otimes \alt_n)\cong \sHom(F,E\otimes \omega_X^{p+1})_{[p+1]}\boxtimes \omega_{X}^{\boxtimes n-p-1}\otimes_{\alt_{[p+1,n]}}\,;
\]
compare \eqref{eq:EI} for the notation used.
As the $\sym_{[p+1]}$-action on $\sHom(F,E\otimes \omega_X^{p+1})_{[p+1]}$ is trivial, 
\begin{align*}
 \Ext^*(\pr_1^* F,E_{[p+1]}\otimes \alt_{[p+1]}\otimes \omega_X^{\boxtimes n}\otimes \alt_n)^{G_1}\cong& \Ho^*\bigl(\sHom(F,E\otimes \omega_X^{p+1})_{[p+1]}\boxtimes \omega_{X}^{\boxtimes n-p-1}\otimes_{\alt_{[p+1,n]}}  \bigr)^{\sym_{[p+2,n]}}\\
 \cong &\Ext^*(F,E\otimes \omega_X^{p+1})\otimes \wedge^{n-p-1}\Ho^*(\omega_{X})
\end{align*}
where the last isomorphism is due to the K\"unneth formula.
Similarly, the second direct summand of \eqref{eq:twosummands} is computed as 
\begin{align*}
&\Ext^*(\pr_1^* F,E_{[2,p+2]}\otimes \alt_{[2,p+2]}\otimes \omega_X^{\boxtimes n}\otimes \alt_n)^{G_2}
\\\cong&
\Ho^*\bigl(\sHom(F,\omega_X)
 \boxtimes (E\otimes \omega_X^{p+1})_{[2,p+2]}\boxtimes \omega_X^{\boxtimes n-p-2}\otimes \alt_{[p+3,n]}  \bigr)^{\sym_{[p+3,n]}}
\\\cong& \Ext^*(F,\omega_X)\otimes \Ho^*(E\otimes \omega_X^{p+1})\otimes \wedge^{n-p-2}\Ho^*(\omega_X)\,.\qedhere
 \end{align*}
\end{proof}

\begin{lemma}\label{lem:extterms} For $p=0,\dots, n-1$, we have
\begin{align*}
 &\Ext_{\sym_n}^*(C^p_E,C_F^0)[p]\\\cong& \left(\Ext^*(E,F\otimes\omega_X^{-p})\otimes S^{n-p-1}\Ho^*(\reg_X)\right)\oplus \left(\Ext^*(E,\omega_X^{-p})\otimes \Ho^*(F)\otimes S^{n-p-2}\Ho^*(\reg_X)\right)\,.   
\end{align*}
In the case $p=n-1$, the second summand vanishes.
\end{lemma}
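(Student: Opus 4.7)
The plan is to deduce the stated formula from \autoref{lem:dual} by dualizing and then repackaging each factor using Serre duality on the curve $X$.

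First, inverting the isomorphism of \autoref{lem:dual} yields
\[
\Ext_{\sym_n}^*(\CC^p_E,\CC^0_F)\cong A^\vee[-n]\oplus B^\vee[-n],
\]
where $A$ and $B$ denote the two summands on the right-hand side of \autoref{lem:dual}. I would then distribute the dual across the tensor factors of $A$ and $B$ using the standard identities $(V\otimes W)^\vee\cong V^\vee\otimes W^\vee$, $(V[a])^\vee\cong V^\vee[-a]$, and $(\wedge^k V)^\vee\cong \wedge^k(V^\vee)$ for finite-dimensional graded vector spaces.

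Next, I would process each factor individually. For the $\Ext$- and $\Ho$-factors, Serre duality on $X$ in the graded form recalled in \autoref{subsect:graded} gives
\[
\Ext^*(F, E\otimes \omega_X^{p+1})^\vee \cong \Ext^*(E, F\otimes \omega_X^{-p})[1],\quad \Ext^*(F,\omega_X)^\vee\cong \Ho^*(F)[1],\quad \Ho^*(E\otimes\omega_X^{p+1})^\vee\cong \Ext^*(E,\omega_X^{-p})[1].
\]
For the wedge-factors, I would first note that Serre duality on $X$ yields $\Ho^*(\omega_X)^\vee\cong \Ho^*(\reg_X)[1]$, and then apply the dual décalage identity $\wedge^k(V[1])\cong (S^k V)[k]$, the companion of \eqref{eq:Swedge}, to rewrite
\[
(\wedge^{n-p-k}\Ho^*(\omega_X))^\vee\cong \wedge^{n-p-k}(\Ho^*(\reg_X)[1])\cong S^{n-p-k}\Ho^*(\reg_X)[n-p-k]
\]
for $k=1,2$.

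Tallying the shifts in each summand, namely a $[1]$ from each Serre duality application on an $\Ext$- or $\Ho$-factor, a $[n-p-k]$ from the décalage on the wedge-factor, and the overall $[-n]$, gives a net shift of $[-p]$ in both summands. Applying $[p]$ to both sides then yields the stated identification, and the second summand vanishes for $p=n-1$ because the corresponding wedge-factor in \autoref{lem:dual} already does. The only genuinely nontrivial ingredient is the dual décalage identity $\wedge^k(V[1])\cong (S^k V)[k]$ for a graded $V$ with mixed parities; this is proved by the same Koszul-sign-tracking argument as \eqref{eq:Swedge}, using that $V\mapsto V[1]$ reverses the parity of each homogeneous component and therefore interchanges the roles of the Koszul-symmetric and Koszul-alternating invariants of $V^{\otimes k}$. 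Everything else is careful bookkeeping of the grading shifts.
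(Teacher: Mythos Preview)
Your proposal is correct and follows essentially the same route as the paper: both dualize \autoref{lem:dual}, apply Serre duality on $X$ to each tensor factor, and use the d\'ecalage between graded wedge and symmetric powers to convert $\wedge^{n-p-k}\Ho^*(\omega_X)$ into $S^{n-p-k}\Ho^*(\reg_X)$, then tally the shifts. The only cosmetic difference is the order of operations: the paper applies $(\_)^\vee[-(n-p)]$ to both sides and uses \eqref{eq:Swedge} directly in the form $S^k(V[1])\cong(\wedge^kV)[k]$ before invoking Serre duality on each factor, whereas you apply Serre duality first and then invoke the companion identity $\wedge^k(V[1])\cong(S^kV)[k]$; these are equivalent rearrangements of the same computation.
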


\begin{proof}
We apply $(\_)^\vee[-(n-p)]$ to both sides of \autoref{lem:dual} and use \eqref{eq:Swedge} to get
\begin{align*}
\Ext_{\sym_n}^*(C^p_E,C_F^0)[p]
\cong  \begin{matrix}\bigl(\Ext^*(F, E\otimes \omega_X^{p+1})^\vee[-1]\bigr)\otimes \bigl(S^{n-p-1}(\Ho^*(\omega_X)^\vee[-1])\bigr)\\\oplus \bigl(\Ext^*(F,\omega_X)^\vee[-1]\bigr)\otimes \bigl(\Ho^*(E\otimes \omega_X^{p+1})^\vee[-1]\bigr)\otimes \bigl(S^{n-p-2}(\Ho^*(\omega_X)^\vee[-1])\bigr)\end{matrix}\,.
\end{align*}
Now, the formula follows by applying Serre duality on the curve $X$ to every factor of the right-hand side.
\end{proof}

\subsection{A closer look at the differentials of the spectral sequence}

Now, we describe the duals of the differentials on the 1-level of the spectral sequence of \autoref{prop:spectralseq}.

\begin{lemma}\label{lem:ddescription}
 Under the isomorphisms of \autoref{lem:dual}, the map 
 \[
  (d^p_{E*})^{\sym_n}\colon \Ext^*_{\sym_n}(\CC^0_F,\CC^p_E\otimes \omega_X^{\boxtimes n}\otimes \alt_n)\to \Ext^*_{\sym_n}(\CC^0_F,\CC^{p+1}_E\otimes \omega_X^{\boxtimes n}\otimes \alt_n) 
 \]
is given by 
$\begin{pmatrix}
A & B \\
0& D
\end{pmatrix}$ where 
\begin{align*}
A\colon \Ext^*(F, E\otimes \omega_X^{p+1})\otimes \wedge^{n-p-1}\Ho^*(\omega_X) \to \Ext^*(F, E\otimes \omega_X^{p+2})\otimes \wedge^{n-p-2}\Ho^*(\omega_X)\\
 \phi\otimes (t_{p+2}\wedge \dots\wedge t_n)  \mapsto \frac{-(p+1)}{n-p-1}\sum_{i=p+2}^n(-1)^{\alpha_i}(\phi\cup t_i)\otimes (t_{p+2}\wedge \dots\wedge \widehat t_i\wedge \dots\wedge t_n)  
\end{align*}
with $\alpha_i:=i+\sum_{j=p+2}^{i-1}\deg(t_j)\deg(t_i)$,
\begin{align*}
B\colon \Ext^*(F,\omega_X)\otimes \Ho^*(E\otimes \omega_X^{p+1})\otimes \wedge^{n-p-2}\Ho^*(\omega_X) &\to \Ext^*(F, E\otimes \omega_X^{p+2})\otimes \wedge^{n-p-2}\Ho^*(\omega_X)\\
 \theta\otimes s\otimes (t_{p+3}\wedge \dots\wedge t_n)  &\mapsto (\theta\cup s)\otimes (t_{p+3}\wedge \dots\wedge t_n),  
\end{align*}
\begin{align*}
&D\colon \Ext^*(F,\omega_X)\otimes \Ho^*(E\otimes \omega_X^{p+1})\otimes \wedge^{n-p-2}\Ho^*(\omega_X)\to \Ext^*(F,\omega_X)\otimes \Ho^*(E\otimes \omega_X^{p+2})\otimes \wedge^{n-p-3}\Ho^*(\omega_X)\\
 &\theta\otimes s\otimes (t_{p+3}\wedge \dots\wedge t_n)  \mapsto \frac{-(p+2)}{n-p-2}\sum_{i=p+3}^n(-1)^{\beta_i}\theta\otimes(s\cup t_i)\otimes (t_{p+3}\wedge \dots\wedge \widehat t_i\wedge \dots\wedge t_n)
\end{align*}
with $\beta_i:=i+\sum_{j=p+3}^{i-1}\deg(t_j)\deg(t_i)$. The second summand of 
$\Ext^*_{\sym_n}(\CC^0_F,\CC^{p+1}_E\otimes \omega_X^{\boxtimes n}\otimes \alt_n)$ vanishes for $p=n-2$, so does the map $D$.  
\begin{proof}
For $i\in [n]$ and $I\subset [n]$, we set 
\begin{align*}
 M_{(i,I)}:=\Ext^*(\pr_i^*F,E_I\otimes \omega_X^{\boxtimes n}\otimes \alt_{[n]\setminus I})\,\,,\quad 
M:=\Ext^*_{\sym_n}(\CC^0_F,\CC^p_E\otimes \omega_X^{\boxtimes n}\otimes \alt_n)&=\bigoplus_{|I|=p+1} M_{(i,I)}\\ 
N:=\Ext^*_{\sym_n}(\CC^0_F,\CC^{p+1}_E\otimes \omega_X^{\boxtimes n}\otimes \alt_n)&=\bigoplus_{|J|=p+2}M_{(j,J)}\,.
 \end{align*}
We write $f:=d^p_{E*}\colon M\to N$ and denote its components by $f_{(i,I),(j,J)}\colon M_{(i,I)}\to M_{(j,J)}$. Note that, by \eqref{eq:ddef}, 
\begin{equation}\label{eq:fvanish}
 f_{(i,I),(j,J)}=0\quad\text{unless $i=j$ and $I\subset J$.}
\end{equation}
As we already observed in the proof of \autoref{lem:extterms}, the group $G:=\sym_n$ acts on $M$, by conjugation by the $G$-linearisations of the $\CC^k_E$, and this action satisfies $g\cdot M_{(i,I)}=M_{g(i), g(I)}$ for every $g\in G$. 
Hence, \eqref{eq:Danila} gives isomorphisms
\[M^{\sym_n}\cong M_{(1,[p+1])}^{G_{(1,[p+1])}}\oplus M_{(1,[2,p+2])}^{G_{(1,[2,p+2])}}\quad,\quad
N^{\sym_n}\cong M_{(1,[p+2])}^{G_{(1,[p+2])}}\oplus M_{(1,[2,p+3])}^{G_{(1,[2,p+3])}}\,.\]
Under these isomorphisms, the map $f^{\sym_n}$ is given by 
\[
\begin{pmatrix}
A & B \\
C& D
\end{pmatrix} \colon M_{(1,[p+1])}^{G_{(1,[p+1])}}\oplus M_{(1,[2,p+2])}^{G_{(1,[2,p+2])}}\to M_{(1,[p+2])}^{G_{(1,[p+2])}}\oplus M_{(1,[2,p+3])}^{G_{(1,[2,p+3])}}
\]
where (compare \eqref{eq:Daninv} or \cite[App.\ A]{Sca1}) 
\begin{align*}
 &A(x)=\sum_{[g]\in \sym_n/G_{(1,[p+1])}}f_{g(1,[p+1]),(1,[p+2])}(gx) \,,\, B(y)=\sum_{[g]\in \sym_n/G_{(1,[2,p+2])}}f_{g(1,[2,p+2]),(1,[p+2])}(gy) \\
&C(x)=\sum_{[g]\in \sym_n/G_{(1,[p+1])}}f_{g(1,[p+1]),(1,[2,p+3])}(gx) \,,\, D(y)=\sum_{[g]\in \sym_n/G_{(1,[2,p+2])}}f_{g(1,[2,p+2]),(1,[2,p+3])}(gy). \\
 \end{align*}
It follows from \eqref{eq:fvanish} that all the summands of $C$ vanish. We now compute $A$ in detail, and leave the analogous computation of $B$ and $D$ to the reader. By \eqref{eq:fvanish}, the only non-vanishing summands of $A$ are the maps $f_{g(1,[p+1]),(1,[p+2])}$ with $g(1,[p+1])=(1,[p+2]\setminus\{k\})$ for $k=2,\dots, p+2$. For $[g]\in \sym_n/G_{(1,[p+1])}$ with $g(1,[p+1])=(1,[p+2]\setminus\{k\})$, a representative of $[g]$ with $g_{\mid [p+3,n]}=\id_{[p+3,n]}$, hence $g\in \sym_{[2,p+2]}$, can be chosen. Since the $\sym_{[2,p+2]}$-action on $M_{(1,[p+2])}$ is trivial, the $\sym_n$-equivariance of $f$ gives  
\begin{align}
A(x)=\sum_{[g]\in \sym_n/G_{(1,[p+1])}}f_{g(1,[p+1]),(1,[p+2])}(gy)&=\sum_{[g]\in \sym_n/G_{(1,[p+1])}}gf_{(1,[p+1]),(1,[p+2])}(y)\notag\\&=(p+1)f_{(1,[p+1]),(1,[p+2])}(y)\label{eq:multiplef}\,.
\end{align}
Now, we recall from the proof of \autoref{lem:dual} that  
\begin{align}
 M_{(1,[p+1])}^{G_{(1,[p+1])}}
 =&\Ext^*(\pr_1^*F,E_{[p+1]}\otimes \omega^{\boxtimes n}\otimes \alt_{[p+2,n]})^{\sym_{[2,p+1]}\times\sym_{[p+2,n]}}\notag\\
\cong&\Ext^*(F,E\otimes \omega^{p+1})\otimes (\Ho^*(\omega)^{\otimes n-p-1}\otimes \alt_{n-p-1})^{\sym_{n-p-1}}
\notag \\\cong& \Ext^*(F,E\otimes \omega^{p+1})\otimes \wedge^{n-p-1}\Ho^*(\omega)\label{eq:invaiso} \,.  
\end{align}
Under the last isomorphism, $\phi\otimes (t_{p+2}\wedge \dots\wedge t_n)$ corresponds to 
\begin{equation}\label{eq:tensorsum}\frac1{(n-p-1)!}\sum_{\sigma\in \sym_{[p+2,n]}}\sgn(\sigma) \phi\otimes \sigma(t_{p+2}\otimes \dots\otimes t_n)\end{equation}  
where $\sigma(t_{p+2}\otimes \dots\otimes t_n)$ is given by permuting the factors with an additional sign whenever to odd degree elements switch places, i.e.\ it is the action on the tensor power of the graded vector space $\Ho^*(\omega)$ as described in \autoref{subsect:graded}. 

For a fixed $i\in [p+2,n]$, every $\sigma\in \sym_{[p+2,n]}$ with $\sigma(i)=p+2$ can be written in the form $\sigma=\tau\circ (p+2,p+3,\dots, i)$ for a unique $\tau\in \sym_{[p+3,n]}$. Then,
\[
\sigma(t_{p+2}\otimes \dots\otimes t_n)= (-1)^{\sum_{j=p+2}^{i-1}\deg(t_j)\deg(t_i)}t_i\otimes\tau(t_{p+2}\otimes \dots\otimes \widehat t_i\otimes  \dots\otimes t_n) 
\]
and $\sgn(\sigma)=(-1)^{i-p-2}\sgn(\tau)$. Hence, $(n-p-1)!$ times \eqref{eq:tensorsum} can be rewritten as 
\begin{equation*}\sum_{i=p+2}^n\sum_{\tau\in \sym_{[p+3,n]}} (-1)^{i-p-2+\sum_{j=p+2}^{i-1}\deg(t_j)\deg(t_i)}\sgn(\tau) \phi\otimes t_i\otimes\tau(t_{p+2}\otimes \dots\otimes \widehat t_i\otimes  \dots\otimes t_n) \end{equation*}  
Since $f_{(1,[p+1]),(1,[p+2])}$ is $(-1)^{p+1}$ times the restriction map, see \eqref{eq:ddef}, we have
\[
f_{(1,[p+1]),(1,[p+2])} \bigl(\phi\otimes t_i\otimes\tau(t_{p+2}\otimes \dots\otimes \widehat t_i\otimes  \dots\otimes t_n)\bigr)=(-1)^{p+1}(\phi\cup t_i)\otimes \tau(t_{p+2}\otimes \dots\otimes \widehat t_i\otimes  \dots\otimes t_n)\,.
\]
Hence, applying $f_{(1,[p+1]),(1,[p+2])}$ to \eqref{eq:tensorsum} gives
\[
\frac{(-1)^{p+1}}{(n-p-1)!} \sum_{i=p+2}^n\sum_{\tau\in \sym_{[p+3,n]}} (-1)^{i-p-2+\sum_{j=p+2}^{i-1}\deg(t_j)\deg(t_i)}\sgn(\tau)(\phi\cup t_i)\otimes \tau(t_{p+2}\otimes \dots\otimes \widehat t_i\otimes  \dots\otimes t_n)\,.
\]
Noting that $\frac1{(n-p-1)!}=\frac1{n-p-1}\cdot \frac1{(n-p-2)!}$, we see that the above corresponds to 
\[
\frac{-1}{(n-p-1)} \sum_{i=p+2}^n (-1)^{i+\sum_{j=p+2}^{i-1}\deg(t_j)\deg(t_i)}(\phi\cup t_i)\otimes (t_{p+2}\wedge \dots\wedge \widehat t_i\wedge \dots\wedge t_n)\,.
\]
under the analogue of the isomorphism \eqref{eq:invaiso} for $M_{i,[p+2]}$. Combining this with \eqref{eq:multiplef} gives the assertion.
\end{proof}
\end{lemma}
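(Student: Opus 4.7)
My plan is to unpack the isomorphism of \autoref{lem:dual} fully and compute how $d^p_{E*}$ acts under the identifications, using Danila's formula \eqref{eq:Danila} twice: once for the source and once for the target. Index the summands of $M := \Ext^*_{\sym_n}(\CC^0_F, \CC^p_E \otimes \omega_X^{\boxtimes n} \otimes \alt_n)$ by pairs $(i, I)$ with $i \in [n]$ and $|I| = p+1$. The $\sym_n$-orbits on such pairs are determined by whether $i \in I$ or $i \notin I$, with representatives $(1, [p+1])$ and $(1, [2, p+2])$, and Danila's formula identifies $M^{\sym_n}$ with the direct sum of the corresponding stabiliser-invariants. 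Doing the same for the target $N^{\sym_n}$ yields the $2 \times 2$ block structure of $f^{\sym_n}$.

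The key structural observation is that, by the Koszul-type formula \eqref{eq:ddef} for $d^p_E$, the component $f_{(i,I),(j,J)}$ vanishes unless $i = j$ and $I \subset J$. This immediately forces the off-diagonal block $C$ to vanish: starting from $i = 1 \in I$ compels $j = 1 \in J$, whereas the second target orbit has $j \notin J$. For $A$, $B$, and $D$, use the dual formula \eqref{eq:Daninv} to rewrite each as a sum $\sum_{[g]} f_{g(i_0, I_0), (j_0, J_0)}(g \cdot x)$ over cosets. By $\sym_n$-equivariance of $f$ and the trivial action of the target stabiliser on the target factor, every non-zero term contributes the same value $f_{(1, I_0'), (j_0, J_0)}(x)$, and a combinatorial count of non-vanishing cosets yields the prefactors $(p+1)$ for $A$, $1$ for $B$, and $(p+2)$ for $D$.

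The remaining step, which is the most delicate, is to transport a single such component through the explicit antisymmetrisation isomorphism $\wedge^{n-p-1}\Ho^*(\omega_X) \cong \bigl(\Ho^*(\omega_X)^{\otimes n-p-1} \otimes \alt_{n-p-1}\bigr)^{\sym_{n-p-1}}$, under which $t_{p+2} \wedge \cdots \wedge t_n$ corresponds to $\frac{1}{(n-p-1)!} \sum_\sigma \sgn(\sigma)\, \sigma(t_{p+2} \otimes \cdots \otimes t_n)$. I would partition the $\sym_{[p+2, n]}$-sum by the index $i$ with $\sigma(i) = p+2$, writing $\sigma = \tau \circ (p+2, p+3, \ldots, i)$ for $\tau \in \sym_{[p+3, n]}$; this yields a permutation sign $(-1)^{i - p - 2}\sgn(\tau)$ together with a Koszul sign $(-1)^{\sum_{j=p+2}^{i-1}\deg(t_j)\deg(t_i)}$ from the graded $\sym$-action of \autoref{subsect:graded}. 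Combining with the intrinsic sign $(-1)^{p+1}$ coming from restriction in \eqref{eq:ddef}, and matching the $(n-p-1)!$ normalisation on the source against the reassembled $(n-p-2)!$ antisymmetrisation on the target, produces the displayed prefactor $\frac{-(p+1)}{n-p-1}$ and the sign $(-1)^{\alpha_i}$ in $A$. The computation of $D$ is entirely parallel with the index range shifted to $[p+3, n]$, while $B$ is simpler since no wedge factor has to be displaced and only the cup product $\theta \cup s$ enters.

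The main obstacle is keeping three independent sign conventions consistent throughout: the $\alt_n$-twist on $\omega_{[X^n/\sym_n]}$, the graded Koszul signs from the $\sym$-action on a tensor power of a graded vector space, and the combinatorial $(-1)^{|\{j \in I : j < i\}|}$ from \eqref{eq:ddef}. As a built-in sanity check, the computed $A$ should satisfy $A \circ A = 0$ by virtue of $d^{p+1} \circ d^p = 0$, which on the formula side reduces to the graded Leibniz rule for the cup product.
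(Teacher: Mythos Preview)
Your proposal is correct and follows essentially the same route as the paper: Danila's lemma on both source and target to get the $2\times 2$ block shape, the vanishing of $C$ from the constraint $i=j$, $I\subset J$, the reduction of $A$ (resp.\ $D$) to a multiple $(p+1)$ (resp.\ $(p+2)$) of a single component via equivariance, and the transport through the antisymmetriser by partitioning $\sym_{[p+2,n]}$ according to $\sigma^{-1}(p+2)$ via the cycle decomposition $\sigma=\tau\circ(p+2,\dots,i)$. One small imprecision: it is not the full target stabiliser that acts trivially on $M_{(1,[p+2])}$, only the $\sym_{[2,p+2]}$-factor; the point is that the coset representatives with non-vanishing contribution can be chosen in $\sym_{[2,p+2]}$, and those act trivially.
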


\subsection{The functor $\CC$ on the level of $\Ext$}

\begin{lemma}
Under the isomorphism 
\[
 \Ext^*_{\sym_n}(\CC(E),\CC(F))= \Ext^*_{\sym_n}(\CC^0_E,\CC^0_F)\cong \begin{matrix}\Ext^*(E,F)\otimes S^{n-1}\Ho^*(\reg)\\\oplus \Ho^*(E^\vee)\otimes \Ho^*(F)\otimes S^{n-2}\Ho^*(\reg)\end{matrix}
\]
of \autoref{lem:extterms}, the map $\CC\colon \Ext^*(E,F)\to \Ext^*\bigl(\CC(E),\CC(F)\bigr)$ 
is given by
\[
 \CC(\phi)=(\phi\otimes \id^{n-1},0)\,.
\]
\end{lemma}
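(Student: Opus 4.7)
The plan is to compute $\CC(\phi) = \bigoplus_{i=1}^n \pr_i^*\phi$ by decomposing it along the $\sym_n$-orbits directly, rather than routing through Serre duality. Starting from the K\"unneth decomposition
\[
\Ext^*(\CC^0_E,\CC^0_F) = \bigoplus_{i,j=1}^n \Ext^*(\pr_i^*E,\pr_j^*F),
\]
an element $g \in \sym_n$ sends the $(i,j)$-summand to the $(g(i),g(j))$-summand. Hence the action on the index set $[n]^2$ has exactly two orbits: the diagonal, represented by $(1,1)$ with stabilizer $\sym_{[2,n]}$, and the off-diagonal, represented by $(1,2)$ with stabilizer $\sym_{[3,n]}$.

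Applying \eqref{eq:Danila} to this decomposition yields
\[
\Ext^*_{\sym_n}(\CC^0_E,\CC^0_F) \cong \Ext^*(\pr_1^*E,\pr_1^*F)^{\sym_{[2,n]}} \oplus \Ext^*(\pr_1^*E,\pr_2^*F)^{\sym_{[3,n]}},
\]
and K\"unneth identifies the first summand with $\Ext^*(E,F)\otimes S^{n-1}\Ho^*(\reg)$ and the second with $\Ho^*(E^\vee)\otimes \Ho^*(F)\otimes S^{n-2}\Ho^*(\reg)$. This reproduces the decomposition of \autoref{lem:extterms} for $p=0$: both decompositions are obtained from \eqref{eq:Danila} plus K\"unneth applied to Serre-dual data, and the identifications match canonically because Serre duality is compatible with K\"unneth in a $\sym_n$-equivariant way.

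Next, read off the components of $\CC(\phi)$ at the two orbit representatives via the projection appearing in \eqref{eq:Danila}. The $(1,2)$-component vanishes, since $\bigoplus_i \pr_i^*\phi$ has no off-diagonal contribution; this kills the second summand. The $(1,1)$-component is $\pr_1^*\phi \in \Ext^*(\pr_1^*E, \pr_1^*F)$, which under the K\"unneth isomorphism corresponds to $\phi \otimes 1^{\otimes(n-1)} = \phi \otimes \id^{n-1}$; this element is manifestly $\sym_{[2,n]}$-invariant and therefore lies in $\Ext^*(E,F) \otimes S^{n-1}\Ho^*(\reg)$. This gives $\CC(\phi) = (\phi \otimes \id^{n-1}, 0)$, as asserted.

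The one point that needs some care is the compatibility of this direct "primal" identification with the Serre-dual identification used in \autoref{lem:extterms}; this is the main (but mild) obstacle. It follows from the naturality of Serre duality with respect to K\"unneth and $\sym_n$-equivariance, so no additional computation is required.
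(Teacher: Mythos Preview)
Your proof is correct and follows essentially the same route as the paper: write $\CC(\phi)=\bigoplus_i\pr_i^*\phi$, apply the Danila isomorphism \eqref{eq:Danila} to the orbit decomposition on $[n]^2$ with representatives $(1,1)$ and $(1,2)$, and then use K\"unneth to identify the two summands. Your final paragraph about compatibility with the Serre-dual route of \autoref{lem:extterms} is more careful than the paper, which simply uses the direct Danila plus K\"unneth identification without comment; but the substance of the argument is the same.
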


\begin{proof}
A class $\phi\in \Ext^i(E,F)$ corresponds to a morphism $E\to F[i]$ in $\D(X)$. We have $\CC(\phi)=\bigoplus_{i=1}^n \pr_i^*\phi\colon \CC(E)=\bigoplus_{i}\pr_i^*E\to \CC(F)[i]=\bigoplus_i\pr_i^*F[i]$. Now, we just need to follow the element $\CC(\phi)$ through the isomorphism \eqref{eq:Danila} and the K\"unneth isomorphism
 \begin{align*}
 \Ext^*_{\sym_n}(\CC(E),\CC(F))&\cong \Ext^*(\pr_1^*E,\pr_1^*F)^{\sym_{[2,n]}}\oplus \Ext^*(\pr_1^*E,\pr_2^*F)^{\sym_{[3,n]}}\\
 &\cong \bigl(\Ext^*(E,F)\otimes S^{n-1}\Ho^*(\reg)\bigr)\oplus \bigl(\Ho^*(E^\vee)\otimes \Ho^*(F)\otimes S^{n-2}\Ho^*(\reg)\bigr)
\end{align*}
to get the assertion.
\end{proof}

\begin{remark}\label{rem:remsuffice}
By \autoref{rem:mainsuffice}, in order to prove \autoref{thm:mainclosed} it suffices to prove the injectivity of $(\_)^{[n]}\colon \Ext^1(E,E)\to \Ext^1(E^{[n]},E^{[n]})$ for every stable bundle $E$ with $\mu(E)\notin[-1,n-1]$. For this, in turn, it suffices to prove that, for every $\phi\in \Ext^1(E,E)$, the element 
$\CC(\phi)=(\phi\otimes \id^{n-1},0)$ does not vanish under the edge morphism 
\[
\Ext^1\bigl(\CC(E),\CC(E)\bigr)\cong \IE^1_{0,1}\twoheadrightarrow \IE^\infty_{0,q}\hookrightarrow \IE^q\cong \Ext^{q}(E^{[n]},F^{[n]}) 
\]
In other words, we need that $\CC(\phi)=(\phi\otimes \id^{n-1},0)$ does not lie in the image of any of the differentials 
\[
\IE^1_{-1,1}\to \IE^1_{0,1}\,,\quad \IE^2_{-2,2}\to \IE^2_{0,1}\,,\quad \IE^3_{-3,3}\to \IE^3_{0,1}\,,\dots \,.
\]
The reason is that, up to the isomorphism $(\_)^{[n]}\cong \pi_*^{\sym_n}\circ \CC$, the edge morphism equals $\pi_*^{\sym_n}$; see \autoref{prop:spectralseq}. 
\end{remark}

\section{The case of genus $g\ge 2$}

In this section, let $X$ be a smooth projective curve of genus $g\ge 2$. We write $\IE$ for the $E=F$ case of the spectral sequence of \autoref{prop:spectralseq}.

\subsection{Results for simple bundles}

\begin{lemma}\label{lem:extvanish}
Let $E\in \VB(X)$ be a simple vector bundle. Then for all $p=1,\dots,n-1$, we have 
\[
\IE^1_{-p,p}\cong \Ext^p_{\sym_n}(C^p_E,C^0_E)=0 
\]
\end{lemma}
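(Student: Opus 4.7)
The plan is to reduce, via \autoref{lem:extterms}, to two vanishing statements on the curve $X$, then dispatch them using simplicity.

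Setting $F=E$ in \autoref{lem:extterms} and extracting the degree-$0$ component on both sides (the left-hand side yields $\Ext^p_{\sym_n}(C^p_E,C^0_E)$ thanks to the $[p]$-shift, and on the right only the degree-$0$ piece of every tensor factor contributes since all factors are non-negatively graded), I obtain
\[
\Ext^p_{\sym_n}(C^p_E,C^0_E)\;\cong\;\Hom(E,E\otimes\omega_X^{-p})\;\oplus\;\bigl(\Hom(E,\omega_X^{-p})\otimes\Ho^0(E)\bigr),
\]
with the second summand absent when $p=n-1$. It then suffices to show that both summands vanish for every $p\ge 1$.

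For the first summand, $\otimes$-adjunction rewrites $\Hom(E,E\otimes\omega_X^{-p})\cong\Hom(E\otimes\omega_X^p,E)$. Suppose for contradiction that $f\colon E\otimes\omega_X^p\to E$ is nonzero. Because $\omega_X$ is base-point-free for $g\ge 2$, so is $\omega_X^p$ for $p\ge 1$. For every $s\in\Ho^0(\omega_X^p)$, the composition $E\xrightarrow{\cdot s}E\otimes\omega_X^p\xrightarrow{f}E$ is an endomorphism of $E$, hence equals $\lambda(s)\cdot\id_E$ by simplicity, defining a linear functional $\lambda\colon\Ho^0(\omega_X^p)\to\IC$. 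If $\lambda\equiv 0$, global generation forces the images of the maps $\cdot s$ to jointly cover $E\otimes\omega_X^p$, so $f=0$. Otherwise, choose $s_0$ with $\lambda(s_0)=1$; then $(\cdot s_0)\circ f$ is an idempotent endomorphism of $E\otimes\omega_X^p$ with image isomorphic to $E$, giving a splitting $E\otimes\omega_X^p\cong E\oplus\ker f$. Both are rank $r$, so $\ker f$ has rank zero and, being a subsheaf of a locally free sheaf, must vanish. Hence $f$ is injective; but $\deg(E\otimes\omega_X^p)=\deg E+rp(2g-2)>\deg E$, so the cokernel would be a torsion sheaf of negative length, an absurdity.

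For the second summand, assume both $f\in\Hom(E,\omega_X^{-p})$ and $\sigma\in\Ho^0(E)$ are nonzero. The morphism
\[
\phi\;=\;(\sigma\otimes\id_{\omega_X^{-p}})\circ f\colon E\to E\otimes\omega_X^{-p},\qquad e\mapsto\sigma(1)\otimes f(e),
\]
is nonzero at every point outside the finite union of the vanishing loci of $\sigma$ and $f$, hence nonzero as a sheaf morphism. This contradicts the vanishing of $\Hom(E,E\otimes\omega_X^{-p})$ established above.

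The main obstacle is the idempotent-splitting step in the first summand, where simplicity of $E$ is used essentially to turn the scalar $\lambda(s_0)$ into a direct-sum decomposition; everything else is rank and degree bookkeeping combined with the global generation of $\omega_X^p$.
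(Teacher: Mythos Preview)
Your proof is correct and follows the same outline as the paper: apply \autoref{lem:extterms} in degree~$0$ to obtain the two summands, then kill both using simplicity of $E$.

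The only real difference is in how you dispatch the first summand. The paper's argument is much shorter: since $g\ge 2$ and $p\ge 1$, there is \emph{some} nonzero section $s\in\Ho^0(\omega_X^p)$, and multiplication by $s$ gives a proper embedding $E\otimes\omega_X^{-p}\hookrightarrow E$. Composing any nonzero $\psi\colon E\to E\otimes\omega_X^{-p}$ with this inclusion yields a nonzero endomorphism of $E$ that factors through a proper subsheaf, hence is not an isomorphism and therefore not a nonzero scalar---contradicting simplicity. The same device (composing with inclusions $\omega_X^{-p}\hookrightarrow\reg_X$ and $\sigma\colon\reg_X\hookrightarrow E$) handles the second summand directly. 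Your argument is sound, but the global generation of $\omega_X^p$ and the idempotent-splitting step are unnecessary: a single section already does the job.
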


\begin{proof}
 By \autoref{lem:extterms}, we have
\begin{align}\label{eq:twosummands}
 \Ext_{\sym_n}^p(C^p_E,C_E^0)\cong \Hom(E,E\otimes\omega_X^{-p}) \oplus \left(\Hom(E,\omega_X^{-p})\otimes \Ho^0(E)\right)\,.   
\end{align}
Since $\omega_X^{-p}\subsetneq \reg_X$ and $E\otimes\omega_X^{-p}\subsetneq E$ are proper subbundles, non-vanishing of one of the two summands in \eqref{eq:twosummands} would yield an endomorphism of $E$ which is not just multiplication by a scalar. But we assumed $E$ to be simple.   
\end{proof}

\begin{theorem}\label{thm:simpleg2}
Let $E\in \VB(X)$ be a simple vector bundle. Then $E^{[n]}$ is again simple for all $n\in \IN$.
\end{theorem}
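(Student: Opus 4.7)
The plan is to apply the spectral sequence of \autoref{prop:spectralseq} with $F=E$ and read off the anti-diagonal $p+q=0$ to compute $\Hom(E^{[n]},E^{[n]})$. First I would invoke \autoref{lem:extvanish} to conclude that $\IE^1_{-p,p}=0$ for every $p\in\{1,\dots,n-1\}$, and hence $\IE^\infty_{-p,p}=0$ for $p\ge 1$. The induced filtration on $\Hom(E^{[n]},E^{[n]})$ then collapses to a single graded piece, giving an isomorphism $\Hom(E^{[n]},E^{[n]})\cong \IE^\infty_{0,0}$, itself a quotient of $\IE^1_{0,0}$.

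The second step is to compute $\IE^1_{0,0}=\Hom_{\sym_n}(\CC^0_E,\CC^0_E)$ via \autoref{lem:extterms} at $p=0$, giving
\[
\IE^1_{0,0}\cong \Hom(E,E)\oplus \bigl(\Hom(E,\reg_X)\otimes \Ho^0(E)\bigr).
\]
The first summand is $\IC$ by simpleness of $E$; the second is the real obstacle. I would handle it under the hypothesis $E\neq \reg_X$, which is genuinely needed here: in characteristic zero the trace $\pi_{X^{(n)}*}\reg_\Xi\twoheadrightarrow \reg_{X^{(n)}}$ splits off $\reg_{X^{(n)}}$ from $\reg_X^{[n]}$, so $\reg_X^{[n]}$ is never simple for $n\ge 2$. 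Given nonzero $\phi\colon E\to \reg_X$ and $s\colon \reg_X\to E$, set $L:=\im(\phi)\subseteq \reg_X$, a nonzero invertible subsheaf, and form
\[
T\colon E\xrightarrow{\phi} L\hookrightarrow \reg_X\xrightarrow{s}E.
\]
Then $T$ factors through a line bundle, so has generic rank at most one; on the other hand, $s|_L\colon L\to E$ is nonzero (a nonzero map from a nonzero invertible subsheaf to a torsion-free sheaf cannot vanish), and precomposition with the surjection $\phi$ keeps it nonzero, so $T\neq 0$. Simpleness forces $T$ to be a nonzero scalar multiple of $\id_E$, which has rank $\rank(E)$, a contradiction when $\rank(E)\ge 2$. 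For $\rank(E)=1$ and $E\neq \reg_X$, having both $\Ho^0(E)\neq 0$ and $\Ho^0(E^{-1})\neq 0$ would force $E\cong \reg_X$, again a contradiction.

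Combining these, $\IE^1_{0,0}\cong \IC$, so the quotient $\IE^\infty_{0,0}$ is either $0$ or $\IC$. Since the edge morphism $\widehat\beta\circ \pi_*^{\sym_n}$ of \autoref{prop:spectralseq} sends $\id_{\CC^0_E}$ to $\id_{E^{[n]}}\neq 0$, it must be $\IC$, and consequently $\End(E^{[n]})\cong \IC$, i.e.\ $E^{[n]}$ is simple. The main obstacle is the vanishing of the off-diagonal summand $\Hom(E,\reg_X)\otimes \Ho^0(E)$: \autoref{lem:extvanish} disposes of the positive-codimension contributions through the positivity of $\omega_X^{-p}$ for $p\ge 1$, but the $p=0$ piece has to be handled by the ad hoc endomorphism-rank computation above, and is precisely where the assumption $E\neq \reg_X$ cannot be avoided.
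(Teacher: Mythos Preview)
Your proof is correct and follows essentially the same route as the paper's: invoke \autoref{lem:extvanish} to kill the $p\ge 1$ anti-diagonal terms, then compute $\IE^1_{0,0}$ via \autoref{lem:extterms} and show the second summand vanishes by producing a non-scalar endomorphism of $E$.

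A few remarks on the differences. You are right to insist on $E\neq\reg_X$; the paper states this hypothesis in the introduction but omits it from the statement of \autoref{thm:simpleg2} itself, even though (as you note) $\reg_X^{[n]}$ is never simple. Your case analysis for the vanishing of $\Hom(E,\reg_X)\otimes\Ho^0(E)$ is more explicit than the paper's one-line remark, and in particular makes clear why the rank~$1$ case genuinely needs $E\neq\reg_X$. Finally, your detour through the edge morphism to rule out $\IE^\infty_{0,0}=0$ is valid but unnecessary: the incoming differentials to $\IE^r_{0,0}$ originate from $\IE^r_{-r,r-1}$, and already $\IE^1_{-r,r-1}=\Ext^{r-1}_{\sym_n}(\CC^r_E,\CC^0_E)=0$ for every $r\ge 1$ (this is the degree~$-1$ part of the expression in \autoref{lem:extterms}, which lives in non-negative degrees). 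So $\IE^\infty_{0,0}=\IE^1_{0,0}$ directly, which is what the paper uses without comment.
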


\begin{proof}
It follows from \autoref{lem:extvanish} that 
\[
\Hom(E^{[n]}, E^{[n]})\cong \IE^0\cong \IE^1_{0,0}\cong \Hom(E_E^0,C_E^0)\cong \Hom(E,E)\oplus \left(\Ho^0(E^\vee)\otimes \Ho^0(E)\right)\,.
\]
The second summand vanishes. Otherwise, we would get an endomorphism of $E$ which is not just multiplication by a scalar.
\end{proof}

\begin{prop}\label{prop:Extinjg2}
 Let $E\neq \reg_X$ be a simple sheaf. Then the map 
$(\_)^{[n]}\colon \Ext^1(E,E)\to \Ext^1(E^{[n]},E^{[n]})$ is injective for all $n\in \IN$.
\end{prop}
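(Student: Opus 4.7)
The plan is to invoke \autoref{rem:remsuffice}, which reduces the injectivity of $(\_)^{[n]}\colon \Ext^1(E,E) \to \Ext^1(E^{[n]}, E^{[n]})$ to showing that, for every nonzero $\phi \in \Ext^1(E,E)$, the class $\CC(\phi) = (\phi \otimes \id^{n-1}, 0) \in \IE^1_{0,1}$ has nonzero image under the edge morphism $\IE^1_{0,1} \twoheadrightarrow \IE^\infty_{0,1}$. Since the spot $(0,1)$ lies on the boundary of the second quadrant in which the spectral sequence is concentrated, it admits no outgoing differentials, and the kernel of the edge surjection is precisely the sum of the images of the incoming differentials $\IE^r_{-r,r} \to \IE^r_{0,1}$ for $r \geq 1$.

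So it suffices to show that every source $\IE^r_{-r,r}$ vanishes. For $1 \leq r \leq n-1$, this is exactly \autoref{lem:extvanish}, which gives $\IE^1_{-r,r} = 0$, and the subquotient $\IE^r_{-r,r}$ therefore vanishes as well. For $r \geq n$, the vanishing is trivial because $\CC^r_E = 0$ outside the range $0 \leq r \leq n-1$. Consequently the edge surjection $\IE^1_{0,1} \twoheadrightarrow \IE^\infty_{0,1}$ is actually an isomorphism.

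It only remains to note that $\CC(\phi) = (\phi \otimes \id^{n-1}, 0)$ is nonzero in $\IE^1_{0,1}$ whenever $\phi \neq 0$: its first coordinate lives in the direct summand $\Ext^1(E,E) \otimes S^{n-1}\Ho^0(\reg_X) \cong \Ext^1(E,E)$ of the decomposition from \autoref{lem:extterms}, and corresponds under this identification to $\phi$ itself. Combining the two bullet items yields the injectivity.

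The proof is a formal assembly of already-established facts; there is no genuine obstacle, and no direct use of the hypothesis $E \neq \reg_X$ enters (that hypothesis plays a role in other places, such as guaranteeing nontriviality of Ext groups used later). The one thing worth verifying carefully is that \autoref{lem:extvanish} applies in full generality to any simple $E$, which it does because its proof only uses simplicity together with $\omega_X^{-p} \subsetneq \reg_X$ for $p \geq 1$, a condition ensured by $g \geq 2$.
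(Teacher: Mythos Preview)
Your proof is correct and follows exactly the same approach as the paper's one-line proof, which simply cites \autoref{lem:extvanish} together with \autoref{rem:remsuffice}. You have merely unpacked the details that the paper leaves implicit, including the observation that the hypothesis $E\neq\reg_X$ is not actually used here (it is needed in the genus~$1$ analogue, \autoref{prop:Extinjg1}, where the sources $\IE^1_{-p,p}$ do not vanish).
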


\begin{proof}
 This follows from \autoref{lem:extvanish} together with \autoref{rem:remsuffice}.
\end{proof}

As explained in \autoref{rem:mainsuffice}, this proves \autoref{thm:mainclosed} for $g\ge 2$.

\subsection{Stable bundles of positive degree}

In the following we will study the spectral sequence $\IE$ of \autoref{prop:spectralseq} in the special case that $g\ge 2$ and $F=E\neq \reg$ is a stable bundle. This will first be done for $\deg(E)\ge 0$ and later for $\deg(E)<0$. When we speak of two linear maps $f\colon A\to B$ and $f'\colon A'\to B'$ being \textit{isomorphic}, we mean that there are isomorphisms $A\cong A'$ and $B\cong B'$ which make the following diagram commutative:
\begin{equation}\label{eq:isodiag}
\begin{CD}
A
@>{f}>>
B
\\
@V{\cong}VV
@V{\cong}VV
\\
A'
@>{f'}>>
B'
\end{CD} 
\end{equation}
The following lemma describes the leftmost non-vanishing differential in every line of the 1-level of the spectral sequence $\IE$.

\begin{lemma}\label{lem:dd>0}
Let $\reg_X\neq E\in \VB(X)$ be a stable bundle with $\deg(E)\ge 0$.  The dual of the differential $\IE^1_{-1,2}\to \IE^1_{0,2}$ is isomorphic to 
\[
 (\cup\,,\,\cup)\colon \begin{matrix}\Hom(E, E\otimes \omega)\otimes\Ho^0(\omega)\\\oplus\\\Hom(E,\omega)\otimes\Ho^0(E\otimes \omega) \end{matrix} \to \Hom(E, E\otimes \omega^2)\qquad\text{for $n=2$,}
\]
and isomorphic to
 \[
\begin{pmatrix}
0&\cup  & \cup & 0 \\
0& 0& 0& \id_{\Ext^1(E,\omega)}\otimes \cup
\end{pmatrix}
  \colon  \begin{matrix}\wedge^2\Ho^0(\omega)\\\oplus\\ \Hom(E, E\otimes \omega)\otimes\Ho^0(\omega)\\\oplus\\\Hom(E,\omega)\otimes\Ho^0(E\otimes \omega)\\\oplus \\ \Ext^1(E,\omega)\otimes \Ho^0(E\otimes \omega)\otimes \Ho^0(\omega) \end{matrix}\longrightarrow \begin{matrix}\Hom(E, E\otimes \omega^2)\\\oplus\\ \Ext^1(E,\omega)\otimes \Ho^0(E\otimes \omega^2)\end{matrix}    
 \]
 for $n\ge 3$, where the various products of global sections are all denoted by the same symbol $\cup$. 
 For $p= 1, \dots, n-3$, the dual of the differential $\IE^1_{-p-1,2+p}\to \IE^1_{-p,2+p}$ is isomorphic to
 \[
\begin{pmatrix}
\cup &  0 & \cup \\
0&  \id_{\Ext^1(E,\omega)}\otimes \cup & 0
\end{pmatrix}
  \colon  \begin{matrix}\Hom(E, E\otimes \omega^{p+1})\otimes\Ho^0(\omega)\\\oplus \\ \Ext^1(E,\omega)\otimes \Ho^0(E\otimes \omega^{p+1})\otimes \Ho^0(\omega)\\\oplus\\\Hom(E,\omega)\otimes \Ho^0(E\otimes \omega^{p+1}) \end{matrix}\to \begin{matrix}\Hom(E, E\otimes \omega^{p+2})\\\oplus\\ \Ext^1(E,\omega)\otimes \Ho^0(E\otimes \omega^{p+2})\end{matrix} .   
 \]
The dual of the differential $\IE^1_{-n+1,n}\to \IE^1_{-n+2,n}$ is isomorphic to
 \[
\begin{pmatrix}
\cup  & \cup 
\end{pmatrix}
  \colon  \begin{matrix}\Hom(E, E\otimes \omega^{n-1})\otimes\Ho^0(\omega)\\\oplus\\\Hom(E,\omega)\otimes \Ho^0(E\otimes \omega^{n-1}) \end{matrix}\to \Hom(E, E\otimes \omega^{n})\,.   
 \]
\end{lemma}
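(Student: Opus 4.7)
The plan is to combine \autoref{lem:dual} (which identifies each $(\IE^1_{-p,q})^\vee$ as a graded-degree piece of a tensor product of Ext and cohomology groups on $X$) with \autoref{lem:ddescription} (which describes the dual differential as an explicit $2\times 2$ block matrix of cup-product maps). What makes the clean formulas possible is that most summands appearing in these general results vanish once $E\neq\reg_X$ is stable with $\deg E\ge 0$ on a curve of genus $g\ge 2$.

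First I would extract $(\IE^1_{-p,p+2})^\vee$ as the graded-degree $n-p-2$ piece of
\[
\Ext^*(E, E\otimes\omega^{p+1})\otimes \wedge^{n-p-1}\Ho^*(\omega)\;\oplus\;\Ext^*(E,\omega)\otimes \Ho^*(E\otimes\omega^{p+1})\otimes \wedge^{n-p-2}\Ho^*(\omega).
\]
Since $\dim\Ho^1(\omega)=1$, the degree-$l$ part of $\wedge^k\Ho^*(\omega)$ equals $\wedge^{k-l}\Ho^0(\omega)$. Stability then kills most surviving summands: by Serre duality $\Ext^1(E,E\otimes\omega^k)\cong\Hom(E,E\otimes\omega^{1-k})^\vee$ vanishes for $k\ge 2$ by slope comparison and equals $\IC$ for $k=1$ by simplicity of $E$; similarly $\Ho^1(E\otimes\omega^k)\cong\Ho^0(E^\vee\otimes\omega^{1-k})^\vee$ vanishes for every $k\ge 1$, the borderline case $k=1$, $\deg E=0$ being excluded by $E\neq\reg_X$ together with stability. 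The surviving graded components match precisely those listed in the statement; the extra summand $\wedge^2\Ho^0(\omega)$ for $p=0$, $n\ge 3$ arises from $\Ext^1(E,E\otimes\omega)\otimes\wedge^2\Ho^0(\omega)$ sitting inside $\Ext^*(E,E\otimes\omega)\otimes\wedge^{n-1}\Ho^*(\omega)$, and it is absent for $n=2$ because then $\wedge^2\Ho^0(\omega)$ would have to come from degree $n-3=-1$ of the wedge.

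Second I would restrict the block matrix $\begin{pmatrix}A & B\\0 & D\end{pmatrix}$ from \autoref{lem:ddescription} to these surviving subspaces. The sums defining $A$ and $D$ collapse to single terms by degree reasons: to land in a non-killed summand of the target, the $t_i$ selected for the cup product must be the unique factor in $\Ho^0(\omega)$, leaving the remaining $\Ho^1(\omega)$ factors to fill the top-weight piece of the wedge. The off-diagonal entry $\Hom(E,E\otimes\omega^{p+1})\otimes\Ho^0(\omega)\to\Ext^1(E,\omega)\otimes\Ho^0(E\otimes\omega^{p+2})$ vanishes by the block-triangular structure of \autoref{lem:ddescription}, while the entries whose image would land in the killed groups $\Ext^1(E,E\otimes\omega^{p+2})=0$ and $\Ho^1(E\otimes\omega^{p+2})=0$ also vanish; this accounts for the remaining zero entries in the stated matrices. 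The non-zero scalar prefactors $-(p+1)/(n-p-1)$ and $-(p+2)/(n-p-2)$ from \autoref{lem:ddescription}, along with the signs $(-1)^{\alpha_i}$ and $(-1)^{\beta_i}$, can be absorbed into the vertical isomorphisms making diagram \eqref{eq:isodiag} commute, so they disappear from the final description. The main obstacle is this sign/scalar bookkeeping together with the verification that no unexpected entries survive; the boundary cases $p=0$ (subcases $n=2$ and $n\ge 3$) and $p=n-2$ (where the $\Ext^1(E,\omega)$-column disappears because the corresponding wedge power has negative exponent, and the dual source $(\IE^1_{-n+1,n})^\vee\cong\Hom(E,E\otimes\omega^n)$ uses only the first summand of \autoref{lem:dual}) are handled by the same analysis with minor modifications of which graded pieces survive.
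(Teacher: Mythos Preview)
Your proposal is correct and follows essentially the same approach as the paper: both identify the terms via \autoref{lem:dual}, use stability of $E\neq\reg_X$ with $\deg E\ge 0$ to kill the summands involving $\Ext^1(E,E\otimes\omega^k)$ for $k\ge 2$ and $\Ho^1(E\otimes\omega^k)$ for $k\ge 1$, then read off the differentials from the block matrix of \autoref{lem:ddescription}, absorbing the scalar prefactors into the vertical isomorphisms of \eqref{eq:isodiag}. The paper illustrates the last step by working out one component (the map $D$ for $p=0$) in detail and leaves the rest as analogous, which is exactly what you do.
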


\begin{proof}
 This is really just a special case of \autoref{lem:dual} and \autoref{lem:ddescription}. 
By the $p=0$ case of \autoref{lem:dual}, we have that $(\IE^{1}_{0,2})^\vee$ is the degree $n-2$ part of 
 \begin{equation}\label{eq:E0}
  \bigl(\Ext^*(E,E\otimes \omega)\otimes \wedge^{n-1}\Ho^*(\omega)\bigr) \oplus \bigl(\Ext^*(E,\omega)\otimes \Ho^*(E\otimes \omega)\otimes \wedge^{n-2}\Ho^*(\omega)\bigr)\,.
 \end{equation}
We note that $\Ho^1(E\otimes \omega)\cong \Ho^0(E^\vee)=0$, since $\reg\neq E$ is stable of non-negative degree. Furthermore, $\Ext^1(E,E\otimes \omega)\cong \Hom(E,E)$ and $\Ho^1(\omega)$ are both one-dimensional, which means that they can be omitted as tensor factors. Hence, the degree 2 part of \eqref{eq:E0} is, for $n\ge 3$, isomorphic to 
\[\wedge^2\Ho^0(\omega)\\\oplus\\ \Hom(E, E\otimes \omega)\otimes\Ho^0(\omega)\\\oplus\\\Hom(E,\omega)\otimes\Ho^0(E\otimes \omega)\\\oplus \\ \Ext^1(E,\omega)\otimes \Ho^0(E\otimes \omega)\otimes \Ho^0(\omega) \]
which is the source of the dual of the differential $\IE_{-1,2}^1\to \IE_{0,2}^1$ as asserted in the lemma. The other sources and targets asserted follow analogously from \autoref{lem:dual}.

Now, the description of the maps comes from \autoref{lem:ddescription}. To illustrate this, let us consider in detail the last component $\Ext^1(E,\omega)\otimes \Ho^0(E\otimes \omega)\otimes \Ho^0(\omega)\to \Ext^1(E,\omega)\otimes\Ho^0(E\otimes \omega^2)$ of the dual of $\IE_{-1,2}^1\to \IE_{0,2}^1$ which is asserted to be $\id_{\Ext^1(E,\omega)}\otimes \cup$. This component is given by the $p=0$ case of the map $D$ of \autoref{lem:ddescription}. Note that we omitted the one dimensional space $\Ho^1(\omega)=\langle t\rangle$ in the notation, which means that $\theta\otimes s\otimes u\in \Ext^1(E,\omega)\otimes \Ho^0(E\otimes \omega)\otimes \Ho^0(\omega)$ corresponds to the element 
\begin{equation}\label{eq:nonzero}
 \theta\otimes s\otimes u\wedge t\wedge \dots\wedge t
\end{equation}
in the description of $D$. Even though $t$ occurs multiple times in the wedge product, the expression \eqref{eq:nonzero} is usually non-vanishing. The reason is that $t$ is of cohomological degree $1$, so the graded wedge product $t\wedge \dots\wedge t$ is really the symmetric power $t^{n-3}$; see \eqref{eq:Swedge}. Now, by \autoref{lem:ddescription} and the fact that $s\cup t=0$ for degree reasons, the element \eqref{eq:nonzero} is sent to 
\[
 \frac{2}{n-2}\theta\otimes(s\cup u)\otimes (t\wedge \dots\wedge t)\,,
\]
which, after hiding the factor $\Ho^1(\omega)$ again, is $\frac{2}{n-2}\theta\otimes(s\cup u)\in \Ext^1(E,\omega)\otimes\Ho^0(E\otimes \omega^2)$. We see that our component is given by $\frac{2}{n-2} \id_{\Ext^1(E,\omega)}\otimes \cup$, and the factor $\frac{2}{n-2}$ can be absorbed by our choice of the component of the vertical isomorphisms as in \eqref{eq:isodiag}. All the other components of the duals of the differentials can be read off from \autoref{lem:ddescription} analogously. 
\end{proof}

For $F\in \VB(X)$, we consider the cokernel of the multiplication map
\[
 \K_{0,2}(F,\omega):=\coker\left(\Ho^0(F\otimes \omega)\otimes \Ho^0(\omega) \xrightarrow\cup \Ho^0(F\otimes \omega^2)\right)\,. 
\]
If $F=\reg$, we simply write $\K_{0,2}(\omega):=\K_{0,2}(\reg,\omega)$. This is a special case of \emph{Koszul cohomology}; see e.g.\ \cite{Green--Koszulcoh}, \cite{Aprodu-Nagel--Koszbook} for general information on this topic.
Famously, there is a close relationship between Koszul cohomology of line bundles on $X$ and global sections of tautological bundles, which was used to prove the Gonality Conjecture; see \cite{Voisin--Green1, Voisin--Green2, Ein-Lazarsfeld--gonalityconj}. Hence, it should not come as a surprise that Koszul cohomology also shows up in our computations of extension groups of tautological bundles.

Let $\overline \cup \colon \Hom(E,\omega)\otimes \Ho^0(E\otimes \omega)\to \K_{0,2}(\sEnd(E),\omega)$ be the composition of the cup product followed by the quotient map $\Hom(E,E\otimes \omega^2)\to \K_{0,2}(\sEnd(E),\omega)$. We set 
\begin{equation}\label{eq:Wdef}
 W_E:=\coker\Bigl(\overline \cup \colon \Hom(E,\omega)\otimes \Ho^0(E\otimes \omega)\to \K_{0,2}(\sEnd(E),\omega)  \Bigr)\,.
\end{equation}

\begin{lemma}\label{lem:Etermsd>0}
 Let $\reg_X\neq E\in \VB(X)$ be a stable bundle with $\deg(E)\ge 0$. Then
 \begin{enumerate}
  \item $\IE_{0,1}^\infty\cong \Ext^1(E,E)\oplus \Ho^1(\reg_X)\oplus\bigl(\Ho^0(E)\otimes \Ho^1(E^\vee)  \bigr)$,
  \item 
    $\IE_{-1,2}^\infty\cong\begin{cases}W_E^\vee \quad&\text{for $n=2$,}\\ W_E^\vee \oplus \bigl(\Ho^0(E)\otimes \K_{0,2}(E,\omega)^\vee\bigr)\quad&\text{for $n\ge 3$.}\end{cases}$
  \item If $g\ge 3$, then $\IE^\infty_{-p-1, 2+p}\cong 0$ for all $p\ge 1$. If $g=2$, then $\IE^\infty_{-p-1, 2+p}\cong 0$ for all $p\ge 2$.  
\end{enumerate}
\end{lemma}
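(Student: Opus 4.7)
The plan is to exploit \autoref{lem:extvanish} to collapse most of the differentials of the spectral sequence, and then to analyse the one remaining relevant differential (the outgoing $d_1$ in the row at hand) via the description in \autoref{lem:dd>0}. Throughout, a stable $E\neq \reg_X$ of non-negative degree is in particular simple, so \autoref{lem:extvanish} gives $\IE^1_{-k,k}=0$ for every $1\le k\le n-1$.

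For (i), every incoming differential $d_r\colon \IE^r_{-r,r}\to \IE^r_{0,1}$ has source which is a subquotient of $\IE^1_{-r,r}=0$, and hence vanishes, so $\IE^\infty_{0,1}=\IE^1_{0,1}$. The asserted formula then follows by specialising \autoref{lem:extterms} to $p=0$ and discarding the summands involving $\Ho^0(E^\vee)=0$, which vanishes by stability. For (ii) the same argument kills every incoming $d_r\colon \IE^r_{-1-r,1+r}\to \IE^r_{-1,2}$ (the sources lie on the diagonal $(-k,k)$ with $k=1+r$), so $\IE^\infty_{-1,2}$ coincides with the kernel of the outgoing $d_1\colon \IE^1_{-1,2}\to \IE^1_{0,2}$. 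By equivariant Serre duality this kernel is linearly dual to the cokernel of the map of \autoref{lem:dd>0}; the block-triangular shape of that map splits the cokernel as the sum of the cokernel of the top block, which equals $W_E$ by \eqref{eq:Wdef}, and, for $n\ge 3$, the cokernel of the bottom block, which equals $\Ext^1(E,\omega)\otimes \K_{0,2}(E,\omega)$. Applying the Serre duality identification $\Ext^1(E,\omega)\cong \Ho^0(E)^\vee$ and dualising gives the stated formulae.

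For (iii) the strategy is to show that the outgoing differential $d_1\colon \IE^1_{-p-1,2+p}\to \IE^1_{-p,2+p}$ is injective; then $\IE^2_{-p-1,2+p}=0$ and the entry stays zero on all later pages. By equivariant Serre duality, injectivity of $d_1$ is equivalent to surjectivity of the dual map of \autoref{lem:dd>0}, and its block shape reduces the question to surjectivity of the two multiplication maps
\begin{align*}
 \Hom(E,E\otimes \omega^{p+1})\otimes \Ho^0(\omega) &\to \Hom(E,E\otimes \omega^{p+2}),\\
 \Ho^0(E\otimes \omega^{p+1})\otimes \Ho^0(\omega) &\to \Ho^0(E\otimes \omega^{p+2})
\end{align*}
(the second only being needed for $p\le n-3$). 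Twisting the evaluation sequence $0\to M_\omega\to \Ho^0(\omega)\otimes \reg_X\to \omega\to 0$ by $\sEnd(E)\otimes \omega^{p+1}$, respectively by $E\otimes \omega^{p+1}$, converts these into the $\Ho^1$-vanishings $\Ho^1(M_\omega\otimes \sEnd(E)\otimes \omega^{p+1})=0$ and $\Ho^1(M_\omega\otimes E\otimes \omega^{p+1})=0$.

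The main obstacle is to verify these $\Ho^1$-vanishings in the sharp numerical range of the lemma. By Serre duality on $X$ they are equivalent to vanishing of $\Ho^0$ of $M_\omega^\vee\otimes \sEnd(E)\otimes \omega^{-p}$ and of $M_\omega^\vee\otimes E^\vee\otimes \omega^{-p}$, whose slopes are bounded above by $2-p(2g-2)$ (using $\mu(E)\ge 0$), hence strictly negative exactly in the asserted ranges $g\ge 3,\,p\ge 1$ and $g=2,\,p\ge 2$. The vanishing of sections then follows from the classical semistability of $M_\omega$ on any curve of genus $g\ge 2$ (stable in the non-hyperelliptic case, and isomorphic to the pull-back of $\reg_{\IP^1}(-1)^{\oplus(g-1)}$ along the hyperelliptic double cover in the hyperelliptic case, in particular polystable), combined with the semistability of $E^\vee$ and $\sEnd(E)$ coming from stability of $E$, the characteristic-zero fact that tensor products of semistable bundles are semistable, and the observation that a semistable bundle of strictly negative slope has no global sections. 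The borderline case $g=2$, $p=1$ is genuinely excluded: for it $M_\omega\cong \omega^{-1}$, and a direct computation yields $\Ho^1(M_\omega\otimes \sEnd(E)\otimes \omega^2)\cong \Hom(E,E)^\vee\ne 0$, so that the multiplication map indeed fails to surject.
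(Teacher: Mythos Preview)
Your argument is correct and follows the same overall structure as the paper's proof: use \autoref{lem:extvanish} to kill the diagonal terms so that $\IE^\infty_{0,1}=\IE^1_{0,1}$ and $\IE^\infty_{-1,2}=\ker(d_1)$, read off (i) from \autoref{lem:extterms}, and compute (ii) by dualising the map of \autoref{lem:dd>0}.

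The only substantive difference is in (iii). The paper dispatches the surjectivity of the two multiplication maps by a direct citation of \cite[Prop.~2.2]{Butler--normalgen}. You instead unpack the argument: tensoring the evaluation sequence for $\omega$ reduces to the $\Ho^1$-vanishing, which you obtain from semistability of $M_\omega$ (stable in the non-hyperelliptic case, a direct sum of copies of $h^*\reg_{\IP^1}(-1)$ in the hyperelliptic case), semistability of $\sEnd(E)$ and $E^\vee$, the characteristic-zero tensor-product theorem, and a slope count. This is essentially what Butler's proposition does behind the scenes, so the two proofs are equivalent in spirit; yours is more self-contained, while the paper's is shorter. Your observation that the borderline case $g=2$, $p=1$ genuinely fails (since then $M_\omega\cong\omega^{-1}$) is a nice explicit confirmation that the hypotheses are sharp.
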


\begin{proof}
By \autoref{lem:extvanish}, we have $\IE_{0,1}^\infty=\IE_{0,1}^1$. Thus, (i) follows by \autoref{lem:extterms}.

By \autoref{lem:extvanish}, we also have $\IE_{-1,2}^\infty=\IE_{-1,2}^2=\ker\bigl(d\colon\IE_{-1,2}^1\to  \IE_{0,2}^1 \bigr)$.
Thus (ii) follows from the first part of \autoref{lem:dd>0}. 

For (iii), it suffices to proof the injectivity of the differential $\IE^1_{-p-1, 2+p}\to \IE^1_{-p, 2+p}$ or, equivalently, the surjectivity of its dual. Both multiplication maps,  \[\Hom(E,E\otimes \omega^{p+1})\otimes \Ho^0(\omega)\to \Hom(E,E\otimes \omega^{p+2})\quad,\quad \Ho^0(E\otimes \omega^{p+1})\otimes \Ho^0(\omega)\to \Ho^0(E\otimes \omega^{p+2})\] are surjective for every $p\ge 1$ (every $p\ge 2$ if $g=2$) by \cite[Prop.\ 2.2]{Butler--normalgen}: For the first map, set $F=\sHom(E,E\otimes \omega^{p+1})$ and $E=\omega$ in \textit{loc.\ cit.}, and for the second $F=E\otimes \omega^{p+1}$ and $E=\omega$. The surjectivity of the dual of $\IE^1_{-p-1, 2+p}\to \IE^1_{-p, 2+p}$ follows now from the second part of \eqref{lem:dd>0}. 
\end{proof}

\begin{remark}
The condition $g\ge 3$ is really necessary for the vanishing of $\IE^2_{-2,3}$ if $n\ge 3$.
Let $E=L$ be a line bundle on a curve of genus $g=2$ with $\Hom(L,\omega)\cong \Ho^1(L)=0$. Then $\IE^2_{-2,3}\neq 0$ as the multiplication map $\Ho^0(\omega^2)\otimes \Ho(\omega)\to \Ho^0(\omega^3)$ is not surjective (its cokernel is one-dimensional; see the diagram \eqref{eq:cupdiag} below). 
\end{remark}

\begin{theorem}\label{Thm:Ext1}
Let $\reg_X\neq E\in \VB(X)$ be a stable bundle with $\deg(E)\ge 0$ on a curve of genus $g\ge 3$. Then
there is a short exact sequence of vector spaces
\[
 0\to \Ext^1(E,E)\oplus \Ho^1(\reg_X)\oplus\bigl(\Ho^0(E)\otimes \Ho^1(E^\vee)  \bigr)\to \Ext^1(E^{[n]},E^{[n]})\to K\to 0
\]
with 
\[K\cong \begin{cases} W_E^\vee \quad&\text{for $n=2$,}\\
  W_E^\vee \oplus \bigl(\Ho^0(E)\otimes \K_{0,2}(E,\omega_X)^\vee\bigr) \quad&\text{for $n\ge 3$.}\end{cases}\]
The $n=2$ case is also valid on a curve of genus $g=2$.
\end{theorem}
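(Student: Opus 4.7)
The proof is essentially an extraction from the convergent spectral sequence of \autoref{prop:spectralseq} applied with $F=E$, combined with the computations assembled in \autoref{lem:Etermsd>0}. The plan is to show that, among the diagonal $p+q=1$ in the second quadrant, only the two terms $\IE^\infty_{0,1}$ and $\IE^\infty_{-1,2}$ survive, and then identify the filtration with the short exact sequence in the statement.

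Concretely, the spectral sequence of \autoref{prop:spectralseq} yields a finite decreasing filtration $\Ext^1(E^{[n]},E^{[n]})=F^{-(n-1)}\supset \dots \supset F^{-1}\supset F^0\supset F^1=0$, whose associated graded pieces are $F^p/F^{p+1}\cong \IE^\infty_{p,1-p}$ for $p\le 0$. By \autoref{lem:Etermsd>0}(iii), one has $\IE^\infty_{-p-1,2+p}=0$ for every $p\ge 1$ provided $g\ge 3$; for $n=2$ the only possible values of $p$ in the spectral sequence are $0$ and $-1$, so the diagonal has only two entries and no genus hypothesis beyond $g\ge 2$ is required. Hence the filtration collapses to $F^0\subseteq F^{-1}=\Ext^1(E^{[n]},E^{[n]})$, giving a short exact sequence
\[
0\to \IE^\infty_{0,1}\to \Ext^1(E^{[n]},E^{[n]})\to \IE^\infty_{-1,2}\to 0.
\]
The inclusion on the left is precisely the edge map of \autoref{prop:spectralseq}, which is known to be injective.

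It then remains to substitute the identifications of \autoref{lem:Etermsd>0}(i)--(ii): the left-hand term is
\[
\Ext^1(E,E)\oplus \Ho^1(\reg_X)\oplus\bigl(\Ho^0(E)\otimes \Ho^1(E^\vee)\bigr),
\]
and the right-hand term is $W_E^\vee$ for $n=2$, respectively $W_E^\vee\oplus(\Ho^0(E)\otimes \K_{0,2}(E,\omega_X)^\vee)$ for $n\ge 3$. This is exactly the claimed short exact sequence.

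There is no real obstacle once \autoref{lem:Etermsd>0} is in hand; the only points requiring care are (a) verifying that for $n=2$ the diagonal $p+q=1$ in the second quadrant contains only $(0,1)$ and $(-1,2)$, so that no appeal to $g\ge 3$ is needed, and (b) correctly interpreting the direction of the filtration so that $\IE^\infty_{0,1}$ appears as a subspace and $\IE^\infty_{-1,2}$ as a quotient—this is determined by the edge-map statement of \autoref{prop:spectralseq}.
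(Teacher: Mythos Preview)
Your proof is correct and follows essentially the same approach as the paper's own proof: invoke \autoref{lem:Etermsd>0}(iii) to see that only the two terms $\IE^\infty_{0,1}$ and $\IE^\infty_{-1,2}$ contribute on the diagonal $p+q=1$, obtain the resulting short exact sequence from the filtration, and then identify the outer terms via parts (i) and (ii) of the same lemma. Your explicit remark that for $n=2$ the spectral sequence has only the columns $p=0,-1$, so no appeal to $g\ge 3$ is needed, is exactly the reason behind the paper's final sentence about the $g=2$ case.
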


\begin{proof}
 By part (iii) of \autoref{lem:Etermsd>0}, we have a short exact sequence
 \[
  0\to \IE_{0,1}^\infty\to \Ext^1(E^{[n]},E^{[n]})\to \IE_{-1,2}^\infty \to 0
 \]
The outer terms of this sequence are described by part (i) and (ii) of \autoref{lem:Etermsd>0}.
\end{proof}

\begin{remark}\label{rem:Kvanish}
The cokernel $K$ in the above description of $\Ext^1(E^{[n]},E^{[n]})$ often vanishes. If $\deg E\ge 3$, then $\K_{0,2}(E,\omega_X)=0$. In other words, the product $\cup \colon \Ho^0(E\otimes \omega)\otimes \Ho^0(\omega)\to \Ho^0(E\otimes \omega^2)$ is surjective, as follows form \cite[Prop.\ 2.2]{Butler--normalgen} if we set $F=E\otimes \omega$ and $E=\omega$ in \textit{loc.\ cit.}. 

If $E=L$ is a line bundle, and $X$ is not hyperelliptic, then we have $\K_{0,2}(E,\omega_X)=0$ already for $\deg L\ge 2$; see \cite[Thm.\ 1]{Butler--line}.

The vector space $W_E$ is a quotient of $\K_{0,2}(\sEnd(E), \omega_X)$. If $E=L$ is a line bundle, we have $\sEnd(L)=\reg_X$, hence $\K_{0,2}(\sEnd(E), \omega_X)= \K_{0,2}(\omega_X)$. The latter vector space vanishes if $X$ is non-hyperelliptic by the Max Noether Theorem; see e.g.\ \cite[p.\ 117]{CurvesBookI}. We also have $\K_{0,2}(\omega_X)=0$ if $g=2$; see \autoref{rem:hyperell} below. 
\end{remark}

\begin{cor}
Let $X$ be non-hyperelliptic. Then, for every line bundle $L$ on $X$ of degree $\deg(L)\ge 2$, we have
\begin{equation}\label{eq:ExtL}
 \Ext^1(L^{[n]},L^{[n]})\cong \Ext^1(L,L)\oplus \Ho^1(\reg_X)\oplus\bigl(\Ho^0(L)\otimes \Ho^1(L^\vee)  \bigr)\,.
\end{equation}
In particular, for any two line bundles $L_1, L_2\in \Pic(X)$ of the same degree $\deg(L_1)=\deg(L_2)\ge 2$ and every $n\ge 2$, we have 
\begin{equation}\label{eq:BNineq}
 \ho^0(L_1)<\ho^0(L_2)\quad \iff \quad \ext^1(L_1^{[n]},L_1^{[n]})< \ext^1(L_2^{[n]},L_2^{[n]})\,.
\end{equation}
\end{cor}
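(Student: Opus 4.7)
The plan is to combine \autoref{Thm:Ext1} with the vanishing results collected in \autoref{rem:Kvanish}, and then read off the numerical consequence using Riemann--Roch and Serre duality on the curve.

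For the first assertion, since $X$ is non-hyperelliptic, we have $g\ge 3$, so \autoref{Thm:Ext1} applies to $E=L$ for every $n\ge 2$. It remains to show that the cokernel $K$ vanishes. I would argue as follows. Since $\sEnd(L)\cong \reg_X$, the space $W_L$ from \eqref{eq:Wdef} is a quotient of $\K_{0,2}(\reg_X,\omega_X)=\K_{0,2}(\omega_X)$, which vanishes by the Max Noether theorem because $X$ is non-hyperelliptic. Hence $W_L=0$. For $n\ge 3$ one also needs the vanishing of the summand $\Ho^0(L)\otimes\K_{0,2}(L,\omega_X)^\vee$, which follows from the Butler result cited in \autoref{rem:Kvanish}: on a non-hyperelliptic curve, $\K_{0,2}(L,\omega_X)=0$ as soon as $\deg(L)\ge 2$. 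Therefore $K=0$ and the short exact sequence of \autoref{Thm:Ext1} splits as a direct sum, giving \eqref{eq:ExtL}.

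For the numerical equivalence \eqref{eq:BNineq}, I would compute each summand of \eqref{eq:ExtL} and observe that only $\ho^0(L)$ actually varies with $L$ among line bundles of a fixed degree $d\ge 2$. Indeed, $\ext^1(L,L)=\ho^1(\reg_X)=g$ is constant. For the mixed term, Serre duality yields $\Ho^1(L^\vee)\cong \Ho^0(L\otimes\omega_X)^\vee$; alternatively, since $L^\vee$ has negative degree, $\ho^0(L^\vee)=0$, so Riemann--Roch gives
\[
\ho^1(L^\vee) = -\chi(L^\vee) = d+g-1.
\]
In particular, this number depends only on $d$, not on $L$. Hence
\[
 \ext^1(L^{[n]},L^{[n]}) \;=\; 2g + (d+g-1)\,\ho^0(L),
\]
and the coefficient $d+g-1$ is strictly positive under our assumptions ($d\ge 2$, $g\ge 3$). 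The equivalence \eqref{eq:BNineq} follows immediately.

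There is essentially no hard step: the content of the argument lies entirely in \autoref{Thm:Ext1}, whose hypotheses must be checked, and in the vanishing of the two Koszul cohomology groups $\K_{0,2}(\omega_X)$ and $\K_{0,2}(L,\omega_X)$. The potential pitfall is merely ensuring that the hypotheses of the Max Noether theorem and of Butler's theorem are satisfied in the form needed; both are guaranteed by non-hyperellipticity together with $\deg(L)\ge 2$. Once $K=0$ is established, the iff statement is purely a Riemann--Roch bookkeeping.
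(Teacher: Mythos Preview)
Your proof is correct and follows exactly the approach the paper intends: the corollary is an immediate consequence of \autoref{Thm:Ext1} together with the vanishing observations in \autoref{rem:Kvanish}, and the numerical equivalence is a direct Riemann--Roch computation once the formula \eqref{eq:ExtL} is established. One trivial remark on phrasing: when $K=0$ the short exact sequence of \autoref{Thm:Ext1} does not merely ``split'' --- the first map is already an isomorphism --- but this is of course what you mean.
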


As mentioned in the introduction, this can be interpreted as follows: Let $X$ be a non-hyperelliptic curve, and let $n\le d\le2(g-1)$. Then, under the closed embedding $\phi\colon \Pic_d(X)\hookrightarrow \cM_{X^{(n)}}$, $[E]\mapsto [E^{[n]}]$, the stratification on $\cM_{X^{(n)}}$ given by the dimension of the tangent spaces pulls back to the Brill--Noether stratification on $\Pic_d(X)$.

\begin{remark}
Denote the first Chern class of $L^{[n]}$ for any degree $d$ line bundle $L$ on $X$ by $\cc_1(d,n)$. Then, there is an isomorphism 
\[
\Pic_d(X)\xrightarrow \cong \Pic_{\cc_1(d,n)}(X^{(n)})\quad ,\quad [L]\mapsto [\det L^{[n]}]\,;
\]
see \cite[Sect.\ 3.2]{Sheridan--symmprod}.
Hence, the embedding $\phi\colon \Pic_d(X)\to \cM_{X^{(n)}}$, precomposed by the inverse of the above isomorphism, is a section of $\det\colon \cM'_{X^{(n)}}\to \Pic_{\cc_1(d,n)}(X)$ where $\cM'_{X^{(n)}}\subset \cM_{X^{(n)}}$ denotes the component containing the image of $\phi$. 
\end{remark}

\begin{remark}
The equivalence \eqref{eq:BNineq} still holds on a non-hyperelliptic curve $X$ if $\deg(L_1)=\deg(L_2)=1$. The only difference is that, in \eqref{eq:ExtL}, there is the additional summand $\Ho^0(L)\otimes \K_{0,2}(L,\omega)^\vee$, which is $1$-dimensional for $\deg(L)=1$ and $\ho^0(L)=1$; compare diagram \eqref{eq:diagcup2} below.
\end{remark}

\begin{remark}\label{rem:hyperell}
 For a hyperelliptic curve $X$ of genus $g\ge 3$ and $\deg(L_1)=\deg(L_2)>0$, we can still proof the implication
\[
 \ho^0(L_1)<\ho^0(L_2)\quad \implies \quad \ext^1(L_1^{[n]},L_1^{[n]})< \ext^1(L_2^{[n]},L_2^{[n]})\,.
\]
To see this, we first compute $\K_{0,2}(\omega)=\coker\bigl(\Ho^0(\omega)\otimes \Ho^0(\omega)\xrightarrow\cup \Ho^0(\omega^2)\bigr)$ for $X$ a hyperelliptic curve of genus $g$. Let $h\colon X\to \IP^1$ be a double cover and $L:=h^*\reg_{\IP^1}(1)$ (which is usually denoted by $L=\reg(g_2^1)$ in the literature). Let $V:=\Ho^0(\IP^1,\reg(1))\cong \IC^2$. Then, for every $a\in \IN$, the pull back 
\[
 h^*\colon S^aV\cong \Ho^0(\IP^1,\reg(a))\to \Ho^0(X,L^n)
\]
is injective, and a dimension count shows that it is an isomorphism if and only if $a\le g$. For all $a,b\in \IN$, we get a commutative diagram
\begin{equation}\label{eq:cupdiag}
\begin{CD}
\Ho^0(X,L^a)\otimes \Ho^0(X,L^b)
@>{\cup}>>
\Ho^0(X,L^{a+b})
\\
@A{h^*\otimes h^*}AA
@A{h^*}AA
\\
S^aV\otimes S^bV
@>{\cup}>>
S^{a+b}V
\end{CD} 
\end{equation}   
where the bottom arrow is surjective and the two upwards arrows are injective. We have $\omega_X\cong L^{g-1}$. Hence, considering diagram \eqref{eq:cupdiag} for $a=g-1=b$, in which case the left upward arrow is an isomorphism, gives
\[
 \knum_{0,2}(\omega)=\ho^0(\omega^2)-\dim(S^{2(g-1)}V)=3(g-1)- 2(g-1)-1=g-2\,.
\]
In particular, $\K_{0,2}(\omega)=0$ for a curve of genus $2$.

Now, let us proof that $\ext^1(L_1^{[n]},L_1^{[n]})< \ext^1(L_2^{[n]},L_2^{[n]})$ for $d=\deg(L_1)=\deg(L_2)>0$ and $\ho^0(L_2)=\ho^0(L_1)+1$. By \autoref{Thm:Ext1}, for every line bundle $L$ of positive degree we have a (non-canonical) direct sum decomposition of vector spaces
\begin{equation}\label{eq:Extsum}
 \Ext^1(L^{[n]},L^{[n]})\cong \Ho^1(\reg_X)\oplus \Ho^1(\reg_X)\oplus\bigl(\Ho^0(L)\otimes \Ho^1(L^\vee)  \bigr) \oplus W_L^\vee \oplus \bigl(\Ho^0(L)\otimes \K_{0,2}(L,\omega_X)^\vee\bigr)
\end{equation}
where the last summand vanishes in the $n=2$ case.

The dimension of the third summand $\Ho^0(L)\otimes \Ho^1(L^\vee)$ of \eqref{eq:Extsum} grows by $\ho^1(L^\vee)=d+g-1$ when we pass from $L_1$ to $L_2$. The dual $W_L$ of the fourth summand of \eqref{eq:Extsum} is a quotient of $\K_{0,2}(\omega)$. Hence, its dimension can shrink by at most $\knum_{0,2}(\omega)=g-2$ when we pass from $L_1$ to $L_2$. Hence, it suffices to proof that the fifth summand $\Ho^0(L)\otimes \K_{0,2}(L,\omega_X)^\vee$ cannot shrink when we pass from $L_1$ to $L_2$. This summand is zero if $\Ho^0(L)=0$ and also if $d\ge 3$; see \autoref{rem:Kvanish}. Hence, the only case to consider is $d=2$, $\ho^0(L_1)=1$, and $\ho^0(L_2)=2$. Let $0\neq s\in\Ho^0(L_1)$. Then, we have the commutative diagram
\begin{equation}\label{eq:diagcup2}
\begin{CD}
\Ho^0(L_1\otimes \omega)\otimes \Ho^0(\omega)
@>{\cup}>>
\Ho^0(L_1\otimes \omega^2)
\\
@A{s\cup }AA
@A{s\cup}AA
\\
\Ho^0(\omega)\otimes \Ho^0(\omega)
@>{\cup}>>
\Ho^0(\omega^2)
\end{CD} 
\end{equation}   
where the left vertical map is an isomorphism, and the right vertical map is injective with image of codimension 1. Hence $\knum_{0,2}(L_1,\omega)=\knum_{0,2}(\omega)+1=g-1$. It follows that the fifth summand in \eqref{eq:Extsum} for $L_1$ has dimension at most $g-1$. 
To compute $\knum_{0,2}(L_2,\omega)$, we note that $L_2=L=\reg(g_2^1)$. Hence, we can consider the $a=g$ and $b=g-1$ case of \eqref{eq:cupdiag}. In this case, the left vertical arrow is still an isomorphism. Hence
\[
 \knum_{0,2}(L_2,\omega)=\ho^0(L^{2g-1})-\dim S^{2g-1} V= 4g-2-g+1-2g= g-1\,,
\]
and the fifth summand in \eqref{eq:Extsum} is bigger for $L=L_2$ than for $L=L_1$.
\end{remark}

\subsection{Stable bundles of negative degree}

We now come to the case of stable bundles of negative degree.

\begin{lemma}\label{lem:ddescription<0}
Let $\reg_X\neq E\in \VB(X)$ be a simple bundle with $\deg(E)< 0$.  The dual of the differential $\IE^1_{-1,2}\to \IE^1_{0,2}$ is isomorphic to
\[
 (\cup\,,\, \cup)\colon \begin{matrix}\Hom(E, E\otimes \omega)\otimes\Ho^0(\omega)\\\oplus\\ \Hom(E,\omega)\otimes \Ho^0(E\otimes \omega)\end{matrix} \to \Hom(E, E\otimes \omega^2)\qquad\text{for $n=2$,}
\]
and isomorphic to 
 \[
\begin{pmatrix}
0 & \cup & 0 & \cup \\
0& 0& \id\otimes \cup & \id\otimes \delta
\end{pmatrix}
  \colon  \begin{matrix}\wedge^2\Ho^0(\omega)\\\oplus \\\Hom(E, E\otimes \omega)\otimes\Ho^0(\omega)\\\oplus\\    \Hom(E,\omega)\otimes \Ho^1(E\otimes \omega)\otimes \Ho^0(\omega)  \\\oplus\\ \Hom(E,\omega)\otimes \Ho^0(E\otimes \omega)\end{matrix}\longrightarrow \begin{matrix}\Hom(E, E\otimes \omega^2)\\\oplus\\ \Hom(E,\omega)\otimes \Ho^1(E\otimes \omega^2)\end{matrix}    
 \]
 for $n\ge 3$, where $\delta$ is the product with a generator of $\Ho^1(\omega)$ (note, however, that the only case when $\Ho^0(E\otimes \omega)$ and $\Ho^1(E\otimes \omega^2)$ are both non-vanishing is $E=\omega^{-1}$, in all other cases $\delta=0$). 
 For $p=1,\dots, n-3$, the dual of the differential $\IE^1_{-p-1,2+p}\to \IE^1_{-p,2+p}$ is isomorphic to
 \[
\begin{pmatrix}
\cup &  0 & \cup\\
0&  \id\otimes \cup & \id\otimes \delta 
\end{pmatrix}
  \colon  \begin{matrix}\Hom(E, E\otimes \omega^{p+1})\otimes\Ho^0(\omega)\\\oplus \\ \Hom(E,\omega)\otimes \Ho^1(E\otimes \omega^{p+1})\otimes \Ho^0(\omega)\\\oplus \\ \Hom(E,\omega)\otimes \Ho^0(E\otimes \omega^{p+1}) \end{matrix}\longrightarrow \begin{matrix}\Hom(E, E\otimes \omega^{p+2})\\\oplus\\ \Hom(E,\omega)\otimes \Ho^1(E\otimes \omega^{p+2})\end{matrix} \,.   
 \]
 Again, $\delta$ is multiplication by a generator of $\Ho^1(\omega)$, but either its domain or its codomain vanishes except for $E=\omega^{-(p+1)}$.\\ The dual of the differential $\IE^1_{-n+1,n}\to \IE^1_{-n+2,n}$ is isomorphic to
 \[
\begin{pmatrix}
\cup & \cup 
\end{pmatrix}
  \colon  \begin{matrix}\Hom(E, E\otimes \omega^{n-1})\otimes\Ho^0(\omega)\\\oplus \\ \Hom(E,\omega)\otimes \Ho^0(E\otimes \omega^{n-1}) \end{matrix}\longrightarrow \Hom(E, E\otimes \omega^{n}) \,.   
 \]
\end{lemma}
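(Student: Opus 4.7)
The plan is to adapt the proof of \autoref{lem:dd>0} verbatim, replacing only the list of vanishing Ext and Hom groups that simplify the outputs of \autoref{lem:dual}. As in the positive degree case, \autoref{lem:dual} identifies $(\IE^1_{-p,p+2})^\vee$ with the degree-$(n-p-2)$ piece of
\[
\bigl(\Ext^*(E, E\otimes\omega^{p+1})\otimes \wedge^{n-p-1}\Ho^*(\omega)\bigr)\oplus \bigl(\Ext^*(E, \omega)\otimes \Ho^*(E\otimes\omega^{p+1})\otimes \wedge^{n-p-2}\Ho^*(\omega)\bigr),
\]
and similarly for $(\IE^1_{-p-1,p+2})^\vee$ with $p$ replaced by $p+1$. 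I will isolate each of these degree-$(n-p-2)$ pieces using that the degree-$a$ part of $\wedge^k\Ho^*(\omega)$ is $\wedge^{k-a}\Ho^0(\omega)\otimes S^a\Ho^1(\omega)$ together with $S^a\Ho^1(\omega)\cong \IC$.

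Next, I will impose the vanishings appropriate to $E$ simple (stable) of negative degree on a curve of genus $g\ge 2$. Since each twist $E\otimes\omega^{-j}$ is then stable of negative degree, Serre duality yields
\[
\Ext^1(E,\omega^{k})\cong \Ho^0(E\otimes\omega^{1-k})^\vee=0 \qquad (k\ge 1), \qquad \Ext^1(E, E\otimes\omega^{k})\cong \Hom(E, E\otimes\omega^{1-k})^\vee=0 \qquad (k\ge 2).
\]
These are the replacements for the vanishings $\Ho^1(E\otimes\omega^{p+1})=0$ that were available in the positive-degree case. They eliminate every summand involving $\Ext^1(E,\omega)$ or $\Ext^1(E, E\otimes\omega^{p+2})$, while the groups $\Ho^1(E\otimes\omega^{p+1})$ now survive and give rise to the new summand $\Hom(E,\omega)\otimes \Ho^1(E\otimes\omega^{p+1})\otimes \Ho^0(\omega)$ in the source. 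As before, the one-dimensional factors $\Ho^1(\omega)$ and $\Ext^1(E,E\otimes\omega)\cong\Hom(E,E)^\vee$ are absorbed into the surrounding tensor products.

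With source and target identified, I will read the matrix entries off from \autoref{lem:ddescription}. The block-triangular form $\left(\begin{smallmatrix}A & B\\ 0 & D\end{smallmatrix}\right)$ there collapses, after the vanishings above, to the $2\times 4$, $2\times 3$, and $1\times 2$ matrices stated. The cup products labelled $\cup$ come from the maps $A$ (cup with a wedge factor on $\Ext^*(E,E\otimes\omega^{p+1})$) and $B$ (cup of $\theta\in\Hom(E,\omega)$ with $s\in\Ho^*(E\otimes\omega^{p+1})$); the entries $\id\otimes\cup$ and $\id\otimes\delta$ come from $D$, which contracts one wedge factor against $s\in\Ho^*(E\otimes\omega^{p+1})$. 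The factor $\delta$ appears in the sub-case where the contracted wedge factor is the generator of $\Ho^1(\omega)$ and $s\in\Ho^0(E\otimes\omega^{p+1})$; a direct Serre-duality check of when both $\Ho^0(E\otimes\omega^{p+1})$ and $\Ho^1(E\otimes\omega^{p+2})$ can be non-zero then shows that $\delta$ vanishes unless $E\cong \omega^{-(p+1)}$.

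The main obstacle is combinatorial: isolating the correct graded piece of each wedge power and verifying that the revised list of vanishings removes exactly those summands present in the source and target of \autoref{lem:dd>0} but absent from the present statement, while leaving in place the new $\Ho^1(E\otimes\omega^{p+1})$ contribution. Once this bookkeeping is settled, the computation of each individual matrix entry is identical to the one already carried out in \autoref{lem:ddescription}, and the scalar prefactors produced there are absorbed into the vertical isomorphisms of \eqref{eq:isodiag}, exactly as in the proof of \autoref{lem:dd>0}.
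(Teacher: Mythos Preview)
Your approach is exactly the paper's: the proof there reads in full ``This is analogous to \autoref{lem:dd>0}. Note that now $\Ext^1(E,\omega)\cong \Ho^0(E)=0$ vanishes.'' Your proposal spells out the same strategy in more detail, with the correct list of vanishings and the same reduction to \autoref{lem:ddescription}.
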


\begin{proof}
This is analogous to \autoref{lem:dd>0}. Note that now $\Ext^1(E,\omega)\cong \Ho^0(E)=0$ vanishes. 
\end{proof}

\begin{lemma}\label{lem:Einfty<0}
 Let $\reg_X\neq E\in \VB(X)$ be a simple bundle with $\deg(E)< 0$ and $n\ge 2$. Then
 \begin{enumerate}
  \item $\IE_{0,1}^\infty\cong \Ext^1(E,E)\oplus \Ho^1(\reg_X)\oplus\bigl(\Ho^1(E)\otimes \Ho^0(E^\vee)  \bigr)$.
  \item With $W_E$ as in \eqref{eq:Wdef}, we have $\IE^{\infty}_{-1,2}\cong W_E^\vee$ for every $n\ge 2$.
  \item If $g\ge 3$, then $\IE^\infty_{-p-1, 2+p}\cong 0$ for all $p\ge 1$. If $g=2$, then $\IE^\infty_{-p-1, 2+p}\cong 0$ for all $p\ge 2$.
 \end{enumerate}
\end{lemma}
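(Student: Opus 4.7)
The plan is to mirror, step by step, the proof of \autoref{lem:Etermsd>0}, with \autoref{lem:ddescription<0} playing the role that \autoref{lem:dd>0} played in the positive-degree case. All three parts will flow from the three input lemmas \autoref{lem:extvanish}, \autoref{lem:extterms}, and \autoref{lem:ddescription<0}, together with Butler's surjectivity result \cite[Prop.\ 2.2]{Butler--normalgen}. The only genuinely new ingredient I will need is a "free" surjectivity statement available on any curve: the cup product $\cup \colon \Ho^0(\omega)\otimes \Ho^1(F) \to \Ho^1(F\otimes \omega)$ is surjective for every $F\in \VB(X)$, which follows by tensoring the evaluation sequence $0\to M_\omega\to \Ho^0(\omega)\otimes \reg_X \to \omega \to 0$ by $F$ and using that $\Ho^2$ vanishes on a curve.

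For (i), \autoref{lem:extvanish} gives $\IE^\infty_{0,1}=\IE^1_{0,1}$, and I read off the degree-$1$ part from the $p=0$ case of \autoref{lem:extterms} applied to $F=E$. Simplicity reduces $\Hom(E,E)$ to $\IC$, so the summand $\Hom(E,E)\otimes \Ho^1(\reg_X)$ contributes $\Ho^1(\reg_X)$; since $E\neq \reg_X$ has negative degree, $\Ho^0(E)=0$, so the summands containing $\Ho^0(E)$ as a tensor factor drop out, leaving precisely $\Ext^1(E,E)\oplus \Ho^1(\reg_X)\oplus (\Ho^1(E)\otimes \Ho^0(E^\vee))$.

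For (ii), \autoref{lem:extvanish} again gives $\IE^\infty_{-1,2}=\IE^2_{-1,2}=\ker(d^1)$, so dualizing I will compute the cokernel of the dual differential from \autoref{lem:ddescription<0}. For $n=2$ this cokernel equals $W_E$ directly by the definition \eqref{eq:Wdef}. For $n\ge 3$ the dual differential is the $2\times 4$ block matrix, and I will handle the two rows separately. The first target row $\Hom(E,E\otimes \omega^2)$ has cokernel $W_E$ by exactly the same argument as for $n=2$ (the $\wedge^2\Ho^0(\omega)$-column contributes to the image via the scalar part of $\sEnd(E)$, but is already absorbed into $\K_{0,2}(\sEnd(E),\omega)$). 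The second target row $\Hom(E,\omega)\otimes \Ho^1(E\otimes \omega^2)$ has cokernel zero because the $\id\otimes\cup$ column is already surjective by the free $\Ho^1$-surjectivity recalled above; the coupling column $\id\otimes\delta$ plays no role. So $\IE^\infty_{-1,2}\cong W_E^\vee$ in both cases.

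For (iii), the strategy is to show that $d^1\colon \IE^1_{-p-1,2+p}\to \IE^1_{-p,2+p}$ is injective for $p\ge 1$ (resp.\ $p\ge 2$ if $g=2$), which forces $\IE^\infty_{-p-1,2+p}=0$. Dually I need surjectivity of the map in the second half of \autoref{lem:ddescription<0}. The first target row $\Hom(E,E\otimes \omega^{p+2})$ is surjected onto by column 1, which is the multiplication $\Ho^0(\sEnd(E)\otimes \omega^{p+1})\otimes \Ho^0(\omega)\to \Ho^0(\sEnd(E)\otimes \omega^{p+2})$; this is surjective by \cite[Prop.\ 2.2]{Butler--normalgen} in exactly the stated range of $p$ and $g$. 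The second target row is surjected onto by column 2 via the same curve-level $\Ho^1$-surjectivity as in (ii). Together these give surjectivity of the full dual differential.

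The main technical wrinkle — and the only essential difference from the positive-degree case — is the appearance of the $\Ho^1(E\otimes \omega^{p+1})$-summand in \autoref{lem:ddescription<0} and the accompanying coupling term $\id\otimes \delta$. The potential worry is that $\delta$ might be needed to cover the second target row, which would make the cokernel/surjectivity analysis much more delicate (and could depend on whether $E$ is of the exceptional shape $\omega^{-(p+1)}$). The clean way out, and the observation I would emphasize in the written-up proof, is that the $\id\otimes \cup$ column is \emph{already} surjective onto the $\Ho^1$-row on any curve, so $\delta$ drops out of the calculation entirely and the bookkeeping collapses to the same pattern as in \autoref{lem:Etermsd>0}.
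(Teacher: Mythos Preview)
Your proposal is correct and follows essentially the same route as the paper. The one visible difference is how you obtain the surjectivity of $\Ho^0(\omega)\otimes\Ho^1(E\otimes\omega^{p+1})\to\Ho^1(E\otimes\omega^{p+2})$: you tensor the evaluation sequence $0\to M_\omega\to\Ho^0(\omega)\otimes\reg_X\to\omega\to 0$ by $E\otimes\omega^{p+1}$ and use $\Ho^2=0$, whereas the paper picks a single nonzero $s\in\Ho^0(\omega)$, uses the short exact sequence $0\to E\otimes\omega^{p+1}\xrightarrow{s}E\otimes\omega^{p+2}\to\reg_{Z(s)}^{\oplus r}\to 0$, and invokes $\Ho^1(\reg_{Z(s)})=0$; both are one-line arguments exploiting $\dim X=1$. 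Two small cosmetic remarks: the $\wedge^2\Ho^0(\omega)$-column in \autoref{lem:ddescription<0} maps by $0$ to both target rows, so it simply does not contribute to the image (rather than being ``absorbed''); and the vanishing $\Ho^0(E)=0$ you use in part~(i) is immediate for stable bundles of negative degree, which is the case relevant for the applications, though the lemma is stated for simple bundles.
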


\begin{proof}
The first part is analogous to \autoref{lem:Etermsd>0}. 
Let now $0\neq s\in \Ho^0(\omega)$, and consider the short exact sequence
\[
 0\to E\otimes \omega^{p+1}\xrightarrow{s} E\otimes \omega^{p+2}\to \reg_K^{\oplus r}\to 0\,.
\]
where $K=Z(s)\subset X$ is a canonical divisor. Since $\Ho^1(\reg_K)=0$, the map \[\_\cup s\colon \Ho^1(E\otimes \omega^{p+1})\to \Ho^1(E\otimes \omega^{p+2})\] is surjective. It follows that \[\cup \colon \Ho^1(E\otimes \omega^p)\otimes\Ho^0(\omega)\to \Ho^1(E\otimes \omega^{p+2})\] is surjective. 
This shows that the dual of the differential $\IE^1_{-p-1,2+p}\to \IE^1_{-p,2+p}$, as described in \autoref{lem:ddescription<0}, surjects to the second direct summand of $(\IE^1_{-p,2+p})^\vee$. The case $p=0$ proves the second part of our lemma.
For the third part, we recall from the proof of \autoref{lem:Etermsd>0} that, by \cite[Prop.\ 2.2]{Butler--normalgen}, the 
multiplication map \[\Hom(E, E\otimes \omega^{p+1})\otimes\Ho^0(\omega)\to \Hom(E,E\otimes \omega^{p+2})\] is  surjective for $p\ge 1$ ($p\ge 2$ if $g=2$). Hence, the dual of the differential $\IE^1_{-p-1,2+p}\to \IE^1_{-p,2+p}$ 
is surjective. 
\end{proof}

\begin{theorem}\label{Thm:Ext1d<0}
Let $\reg_X\neq E\in \VB(X)$ be a stable bundle with $\deg(E)< 0$. Then there is a short exact sequence
\begin{align*}
0\to  \Ext^1(E,E)\oplus \Ho^1(\reg_X)\oplus\bigl(\Ho^1(E)\otimes \Ho^0(E^\vee)  \bigr)  \to  \Ext^1(E^{[n]},E^{[n]})\to W^\vee_E\to 0\,.
\end{align*}
\end{theorem}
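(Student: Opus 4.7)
The plan is to read off both the sub-object and the quotient part of $\Ext^1(E^{[n]},E^{[n]})$ directly from the spectral sequence of \autoref{prop:spectralseq} with $F=E$. The terms contributing to $\Ext^1$ are those on the antidiagonal $p+q=1$, namely $\IE^\infty_{-p,p+1}$ for $p\ge 0$, and by \autoref{lem:Einfty<0} all of these vanish except $\IE^\infty_{0,1}$ and $\IE^\infty_{-1,2}$, which are computed in parts (i) and (ii) of that lemma.

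Concretely, I would argue as follows. Since $E$ is simple, \autoref{lem:extvanish} forces $\IE^1_{-1,1}=0$, so $\IE^\infty_{0,1}=\IE^1_{0,1}$, and by \autoref{lem:Einfty<0}(i) this yields the first summand
\[
 \Ext^1(E,E)\oplus \Ho^1(\reg_X)\oplus\bigl(\Ho^1(E)\otimes \Ho^0(E^\vee)\bigr)
\]
as a subspace of $\Ext^1(E^{[n]},E^{[n]})$ via the edge morphism. Next, \autoref{lem:extvanish} gives $\IE^1_{-1,1}=0=\IE^1_{-2,2}$, so $\IE^\infty_{-1,2}$ coincides with the $\IE^2$-term, i.e.\ with the kernel of the differential $\IE^1_{-1,2}\to \IE^1_{0,2}$. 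Dualising via \autoref{lem:dual}, this kernel becomes the cokernel of the map described in \autoref{lem:ddescription<0}, which by construction of $W_E$ in \eqref{eq:Wdef} is precisely $W_E^\vee$. Finally, \autoref{lem:Einfty<0}(iii) ensures that all remaining graded pieces $\IE^\infty_{-p-1,p+2}$ with $p\ge 1$ vanish.

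Assembling these ingredients, the filtration on $\Ext^1(E^{[n]},E^{[n]})$ induced by the spectral sequence reduces to a single non-trivial extension with kernel $\IE^\infty_{0,1}$ and cokernel $\IE^\infty_{-1,2}$, which is exactly the short exact sequence claimed. The substantive content has already been carried out in \autoref{lem:Einfty<0} and \autoref{lem:ddescription<0}; the only remaining task is to package those identifications into the abutment of the spectral sequence, and the main point one has to be careful about is verifying that no hidden higher differentials into $\IE^r_{0,1}$ or out of $\IE^r_{-1,2}$ can obstruct this, but this is precisely what the vanishing in parts (i)--(iii) of \autoref{lem:Einfty<0} guarantees.
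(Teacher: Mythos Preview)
Your proposal is correct and follows essentially the same route as the paper: the paper's proof is a one-line reference to \autoref{lem:Einfty<0}, saying the result follows from it exactly as \autoref{Thm:Ext1} followed from \autoref{lem:Etermsd>0}, and you have simply unpacked that reference into the spectral-sequence filtration argument. Your exposition re-derives a few things already contained in \autoref{lem:Einfty<0} (e.g.\ the identification $\IE^\infty_{-1,2}\cong W_E^\vee$), and in doing so you slightly misstate which vanishings are needed---to conclude $\IE^\infty_{-1,2}=\IE^2_{-1,2}$ one needs $\IE^1_{-p,p}=0$ for all $p\ge 3$ (to kill incoming $d_r$ for $r\ge 2$), not just $\IE^1_{-2,2}=0$---but since you ultimately invoke \autoref{lem:Einfty<0}(ii) directly, this is harmless.
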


\begin{proof}
 This follows from \autoref{lem:Einfty<0} the same way as \autoref{Thm:Ext1} followed from \eqref{lem:Etermsd>0}. 
\end{proof}

\subsection{Singular points in the moduli space}

Let $E\neq \reg_X$ be a stable vector bundle. 
Our above computations show that, in almost all cases, $\IE^\infty_{0,2}\subset \Ext^2(E^{[n]},E^{[n]})$ is strictly bigger than $\Ho^2(\reg_{X^{(n)}})\cong \wedge^2\Ho^1(\reg_X)$, which means that the obstruction space 
\[
\Ext^2_0(E^{[n]},E^{[n]}):=\ker\Bigl( \Ext^2(E^{[n]},E^{[n]})\xrightarrow{\tr} \Ho^2(\reg_{X^{(n)}})  \Bigr) 
\]
does not vanish. However, for $n=2$, there are a few exceptions where $\Ext^2_0(E^{[2]},E^{[2]})=0$.

\begin{prop}\label{prop:smooth}
Let $X$ be either of genus $g=2$ or non-hyperelliptic of genus $g=3$ and let $L$ be a line bundle on $X$ with $\Hom(L,\omega)\otimes \Ho^0(L\otimes \omega)=0$. Then $\Ext^2_0(L^{[2]},L^{[2]})=0$. In particular $[L^{[2]}]$ is a smooth point of $\cM_{X^{(2)}}$.  
\end{prop}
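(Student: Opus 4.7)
The approach is to compute $\Ext^2(L^{[2]}, L^{[2]})$ via the spectral sequence of \autoref{prop:spectralseq}, show that its dimension equals $\binom{g}{2} = \dim \Ho^2(\reg_{X^{(2)}})$, and conclude $\Ext^2_0(L^{[2]}, L^{[2]}) = 0$ from the fact that the trace is a split surjection (since $\rank(L^{[2]}) = 2 \neq 0$). Smoothness of $\cM_{X^{(2)}}$ at $[L^{[2]}]$ will then follow from standard deformation theory, noting that $L^{[2]}$ is indeed stable by \cite{Krug--stab}: the hypothesis $\Hom(L, \omega) \otimes \Ho^0(L \otimes \omega) = 0$ forces $|\deg L| > 2g - 2 \geq 2$, so $\mu(L) \notin [-1, 1]$.

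First I would observe that, for $n = 2$, the complex $\CC^\bullet_L$ has only two non-zero terms, so $\IE^1_{p, q}$ is concentrated in the columns $p \in \{-1, 0\}$. Consequently every higher differential $d_r$ ($r \geq 2$) landing in or leaving $\IE^r_{0, 2}$ is automatically zero (its source or target lies in a column with $|p| \geq 2$), which gives $\IE^\infty_{0, 2} = \coker(d_1 \colon \IE^1_{-1, 2} \to \IE^1_{0, 2})$. Furthermore, by \autoref{lem:extterms}, $\IE^1_{-1, 3}$ is identified with $\Ext^2(L, L \otimes \omega^{-1})$, which vanishes since $X$ is a curve. Hence $\Ext^2(L^{[2]}, L^{[2]}) = \IE^\infty_{0, 2}$.

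Next, by the $n = 2$ case of \autoref{lem:dd>0} (or \autoref{lem:ddescription<0} when $\deg L < 0$ — the two descriptions coincide) and Serre duality, the dual of $d_1$ identifies with
\[
 (\cup, \cup) \colon \Ho^0(\omega) \otimes \Ho^0(\omega) \oplus \Hom(L, \omega) \otimes \Ho^0(L \otimes \omega) \longrightarrow \Ho^0(\omega^2).
\]
The hypothesis kills the second summand, so $\Ext^2(L^{[2]}, L^{[2]})$ has the same dimension as $\ker(\mu)$, where $\mu \colon \Ho^0(\omega)^{\otimes 2} \to \Ho^0(\omega^2)$ is the canonical multiplication. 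The key step is to verify $\dim \ker(\mu) = \binom{g}{2}$ in both allowed cases. For $X$ non-hyperelliptic of genus $3$, Max Noether gives surjectivity of $S^2 \Ho^0(\omega) \to \Ho^0(\omega^2)$, and the dimension count $\binom{g+1}{2} = 6 = 3g - 3$ promotes it to an isomorphism, so $\ker(\mu) = \wedge^2 \Ho^0(\omega)$ has dimension $3$. For $g = 2$, the diagram \eqref{eq:cupdiag} from \autoref{rem:hyperell} with $a = b = 1$ factors $\mu$ as $V \otimes V \twoheadrightarrow S^2 V \xrightarrow{\sim} \Ho^0(\omega^2)$ with $V = \Ho^0(\omega) \cong \IC^2$, so $\ker(\mu) = \wedge^2 V$ is $1$-dimensional. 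In both cases this matches $\dim \Ho^2(\reg_{X^{(2)}}) = \dim \wedge^2 \Ho^1(\reg_X) = \binom{g}{2}$, the latter computed from K\"unneth and the sign-twisted $\sym_2$-action on $\Ho^1(\reg_X)^{\otimes 2}$. Equality of dimensions then forces $\Ext^2_0(L^{[2]}, L^{[2]}) = \ker(\tr) = 0$.

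The main obstacle will be the bookkeeping: one must correctly identify the surviving summands in the $\sym_n$-invariants after dualising $d_1$, and confirm that the two pieces of the dual cup-product map decouple cleanly under the hypothesis so that only $\ker(\mu)$ remains. Once this is set up, the kernel-dimension computation is immediate, either from Max Noether (for non-hyperelliptic $g = 3$) or from the hyperelliptic diagram recalled in \autoref{rem:hyperell} (for $g = 2$).
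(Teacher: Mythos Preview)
Your argument is correct and follows essentially the same line as the paper's: both compute $\ext^2(L^{[2]},L^{[2]})$ via the two-column spectral sequence, identify it with the dimension of the kernel (equivalently, cokernel) of the multiplication $\Ho^0(\omega)\otimes\Ho^0(\omega)\to\Ho^0(\omega^2)$ after the hypothesis kills the $\Hom(L,\omega)\otimes\Ho^0(L\otimes\omega)$ piece, and then match this with $\binom g2=\ho^2(\reg_{X^{(2)}})$. You are in fact slightly more careful than the paper in checking $\IE^1_{-1,3}=0$ explicitly and in observing that $\ker(\mu)=\wedge^2\Ho^0(\omega)$ because $S^2\Ho^0(\omega)\to\Ho^0(\omega^2)$ is an isomorphism in both allowed cases.

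One small error: your claim that the hypothesis forces $|\deg L|>2g-2$ is false. For $g=2$, a generic line bundle $L$ of degree $-1$ has $\deg(L\otimes\omega)=1$ and hence $\Ho^0(L\otimes\omega)=0$, so the hypothesis holds while $|\deg L|=1<2$. This only affects your justification that $L^{[2]}$ lands in $\cM_{X^{(2)}}$ via \cite{Krug--stab}; it has no bearing on the computation of $\Ext^2_0$. The paper's own proof does not address stability at all and implicitly treats the ``in particular'' clause as conditional on $[L^{[2]}]$ being a point of the moduli space.
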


\begin{proof}
 Since the trace map has the right-inverse $\alpha\mapsto \alpha\cup \id_L$, it suffices to show that $\ext^2(L^{[2]},L^{[2]})=\ho^2(\reg_{X^{(2)}})=\binom g2$. By the vanishing of $\Hom(L,\omega)\otimes \Ho^0(L\otimes \omega)$ and the description of the differential $\IE^1_{-1,2}\to \IE^1_{0,2}$ in \autoref{lem:dd>0} and \autoref{lem:ddescription<0}, we have
\[
 \ext^2(L^{[2]},L^{[2]})=\dim(\IE^2_{0,2})\ge\ho^0(\omega)^2-\ho^0(\omega^2)=g^2-3(g-1)\,.  
 \]
with equality in the middle if and only if $\cup\colon \Ho^0(\omega)\otimes \Ho^0(\omega)\to \Ho^0(\omega^2)$ is surjective, which is the case if and only if $X$ is non-hyperelliptic or $g=2$. Furthermore, we have $g^2-3(g-1)=\binom g2$ if and only if $g=2$ or $3$.
\end{proof}

\begin{remark}\label{rem:smooth}
 For $|d|=|\deg(L)|\gg 0$, the condition $\Hom(L,\omega)\otimes \Ho^0(L\otimes \omega)=0$ is always satisfied. Hence, for $X$ 
of genus $g=2$ or non-hyperelliptic of genus $g=3$, and $|d|\gg 0$, the image of the map $\phi\colon \Pic_d(X)\hookrightarrow \cM_{C^{(2)}}$ is contained in the smooth locus. Since in most other cases the image of $(\_)^{[n]}$ is contained in the singular locus of $\cM_{X^{(n)}}$ (see \autoref{thm:sing} below), one might conjecture that the whole component of $\cM_{X^{(2)}}$ containing $\phi(\Pic_d(X))$ for $|d|\gg 0$ and $X$ of genus $g=2$ or non-hyperelliptic of genus $g=3$ is smooth. 
 \end{remark}

\begin{theorem}\label{thm:sing}
Let $E\in \VB(X)$ be a stable vector bundle on a curve of genus $g\ge 3$. For $|\mu(E)|\gg 0$, the point $[E^{[n]}]$ is a singular point of $\cM_{X^{(n)}}$ for every $n\in \IN$, except if $n=2$, and $X$ is non-hyperelliptic of genus $g=3$.
\end{theorem}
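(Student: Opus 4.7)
The strategy is to establish singularity at $[E^{[n]}]$ by showing that the traceless obstruction space
\[
\Ext^2_0(E^{[n]},E^{[n]}):=\ker\bigl(\tr\colon \Ext^2(E^{[n]},E^{[n]})\to \Ho^2(\reg_{X^{(n)}})\bigr)
\]
is non-zero for $|\mu(E)|\gg 0$, outside the stated exception. This is the converse direction of the argument in \autoref{prop:smooth}: non-vanishing of $\Ext^2_0$, together with the standard deformation theory of moduli of simple sheaves, forces $[E^{[n]}]$ to be singular.

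The first step is to isolate a large enough piece of $\Ext^2(E^{[n]},E^{[n]})$ via the spectral sequence of \autoref{prop:spectralseq} applied with $F=E$. By the vanishings in \autoref{lem:Etermsd>0}(iii) and \autoref{lem:Einfty<0}(iii), no further differentials land on $\IE^r_{0,2}$ for $r\ge 2$, so
\[
\IE^\infty_{0,2}=\coker\bigl(d^1\colon \IE^1_{-1,2}\to \IE^1_{0,2}\bigr)\subset \Ext^2(E^{[n]},E^{[n]}).
\]
It therefore suffices to show $\dim \coker(d^1)>\binom{g}{2}=\ho^2(\reg_{X^{(n)}})$, since the cup with $\id_{E^{[n]}}$ provides a splitting of $\tr$ whose image has dimension exactly $\binom{g}{2}$.

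Using \autoref{lem:dd>0} (for $\mu(E)\gg 0$) or \autoref{lem:ddescription<0} (for $\mu(E)\ll 0$), the dual of $d^1$ is described explicitly by cup products. Stability forces several summands of the source to vanish in the asymptotic regime: $\Hom(E,\omega)=0$ when $\mu(E)>2g-2$, and dually $\ho^0(E\otimes \omega)=0$ when $\mu(E)+2g-2<0$. What remains is essentially the pair of multiplication maps
\[
\wedge^2\Ho^0(\omega)\xrightarrow{\cup}\Ho^0(\omega^2)\quad\text{and}\quad \Hom(E,E\otimes\omega)\otimes \Ho^0(\omega)\xrightarrow{\cup}\Hom(E,E\otimes\omega^2),
\]
augmented, for $n\ge 3$, by a Koszul-type contribution via $\K_{0,2}(E,\omega)$. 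A Riemann--Roch dimension count, parallel to the one in the proof of \autoref{prop:smooth}, yields the strict inequality $\dim\coker(d^1)>\binom{g}{2}$ in every case except $n=2$ with $X$ non-hyperelliptic of genus $g=3$, where equality holds; for $n\ge 3$ with $g=3$ non-hyperelliptic, the added Koszul summand $\Ho^0(E)\otimes \K_{0,2}(E,\omega)^\vee$ pushes the cokernel strictly past $\binom{g}{2}$ once $\ho^0(E)$ is large, i.e.\ once $|\mu(E)|\gg 0$.

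The main obstacle is the final step: deducing singularity from $\Ext^2_0\neq 0$. This is not automatic, since one must ensure that the Yoneda obstruction map $\Ext^1\otimes \Ext^1\to \Ext^2$ actually hits a non-zero class in the kernel of the trace; equivalently, some first-order deformation of $E^{[n]}$ must be genuinely obstructed. The acknowledgement to Lehn indicates this is the delicate point. A natural approach is to use the ``new'' tangent directions coming from the summand $\Ho^0(E)\otimes \Ho^1(E^\vee)$ (or its dual for $\mu(E)<0$) in \autoref{Thm:Ext1} and \autoref{Thm:Ext1d<0}, which do not lie in the image of $\Ext^1(E,E)\to \Ext^1(E^{[n]},E^{[n]})$: bracketing two such classes via the Yoneda product should produce an unobstructable class that traces into the identified non-vanishing part of $\Ext^2_0$, completing the argument.
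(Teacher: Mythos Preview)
Your proposal contains a genuine gap, and it is precisely the one you yourself flag in the last paragraph: the inference from $\Ext^2_0(E^{[n]},E^{[n]})\neq 0$ to singularity of $[E^{[n]}]$ is \emph{not} valid in general. Non-vanishing of the obstruction space only says that the tangent space exceeds the \emph{expected} dimension; the actual dimension of the component through $[E^{[n]}]$ could be larger. So your first paragraph, which asserts that ``non-vanishing of $\Ext^2_0$ \ldots forces $[E^{[n]}]$ to be singular'', contradicts your own later (correct) observation that ``this is not automatic''. The paper resolves this in two different ways depending on $n$. For $n=2$, it invokes an external result (\cite[Cor.~1.3]{Krugdisc}) stating that the relevant component of $\cM_{X^{(2)}}$ is irreducible of the expected dimension for $|\mu(E)|\gg 0$; only with this in hand does the inequality $\ext^2_0>0$ force singularity. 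You never mention this ingredient. For $n\ge 3$, no such expected-dimension result is available, and the paper takes a completely different route: it shows directly that the Yoneda \emph{square} $q(\alpha)=\alpha\circ\alpha$ is non-zero for some $\alpha\in\Ext^1(E^{[n]},E^{[n]})$, so that a concrete first-order deformation is obstructed. This is done via the commutative diagram comparing $q$ on $\Ext^*_{\sym_n}(\CC^0_E,\CC^0_E)$ with $q$ on $\Ext^*(E^{[n]},E^{[n]})$ through the edge morphisms, the explicit formula $\widehat q(\theta\otimes s)=\theta\otimes s\otimes(\theta\circ s)$ on the summand $\Ho^1(E^\vee)\otimes\Ho^0(E)$ (resp.\ its analogue for $\deg E<0$), and a dimension argument showing $\langle\im(\widehat q)\rangle\not\subset\im(\widehat d)$. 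Your final paragraph gestures in this direction but does not carry it out; in particular, note that what is used is the square of a \emph{single} class, not a pairing of two distinct classes.

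There is also a specific error in your dimension argument for $n\ge 3$, $g=3$ non-hyperelliptic. The summand $\Ho^0(E)\otimes\K_{0,2}(E,\omega)^\vee$ that you invoke lives in $\IE^\infty_{-1,2}$, which contributes to $\Ext^1$ (total degree $-1+2=1$), not to $\Ext^2$; and in any case $\K_{0,2}(E,\omega)=0$ once $\deg E\ge 3$ (see \autoref{rem:Kvanish}), so it vanishes in the regime $|\mu(E)|\gg 0$. Thus a pure dimension count on $\coker(d^1)$ cannot distinguish $n=2$ from $n\ge 3$ in the $g=3$ non-hyperelliptic case, which is exactly why the paper switches to the Yoneda-square method there.
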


\begin{proof}
For $n=2$, the component of $\cM_{X^{(2)}}$ containing $E^{[2]}$ is irreducible of the expected dimension if $|\mu(E)|\gg 0$; see \cite[Cor.\ 1.3]{Krugdisc}.
But, by a generalisation of the computation of the proof of \autoref{prop:smooth}, we have
\begin{align*}
 \ext^2(E^{[2]},E^{[2]})\ge \hom(E,E\otimes \omega)\ho^0(\omega)-\hom(E,E\otimes \omega^2)&=\bigl(r^2(g-1)+1\bigr)g-3r^2(g-1)\\&=(g-3)r^2(g-1)+g
\end{align*}
which for $g\ge 4$ is bigger than $\ho^2(\omega_{X^{(2)}})=\binom g2$. Hence, for $g\ge 4$, we have
$\ext^2_0(E^{[2]},E^{[2]})>0$. This means that the tangent space $\TT_{\cM_{X^{(2)}}}([E^{[2]}])$ is of higher dimension than the expected dimension, which is the dimension of $\cM_{X^{(2)}}$ in the point $[E^{[2]}]$.  

For $n\ge 3$, we note that it suffices to prove the non-vanishing of the Yoneda square map 
\[
 q\colon \Ext^1(E^{[n]},E^{[n]})\to \Ext^2(E^{[n]},E^{[n]})\quad,\quad q(\alpha)=\alpha\circ \alpha\,,
\]
as $q(\alpha)$ is the obstruction to extending the first order deformation corresponding to $\alpha$ over $\IC[t]/t^3$; see \cite[Exa.\ 2.16]{Manetti-deftheo}, \cite[Sect.\ 4.2]{Wandel2}. 
Since $\pi_*^{\sym_n}$ is a functor, hence compatible with the Yoneda product, we have a commutative diagram
\begin{align}\label{eq:cdYoneda}
\begin{CD}
\Ext_{\sym_n}^1(\CC^0_E,\CC^0_E)
@>{q}>>
\Ext_{\sym_n}^2(\CC^0_E,\CC^0_E)
\\
@VVV
@VVV
\\
\Ext^1(E^{[n]},E^{[n]})
@>{q}>>
\Ext^2(E^{[n]},E^{[n]})\,.
\end{CD} 
\end{align}
where the horizontal maps are the Yoneda squares, and the vertical maps are the edge morphisms of the spectral sequence of
\autoref{prop:spectralseq}. Let us first consider the case $\deg(E)> 0$, where \autoref{lem:extterms} gives isomorphisms
\begin{align}
 \Ext^1(\CC^0_E,\CC^0_E)&\cong \Ext^1(E,E) \oplus \bigl(\Hom(E,E)\otimes \Ho^1(\reg))\oplus \bigl(\Ho^1(E^\vee)\otimes \Ho^0(E)\bigr)\,,\notag\\
\Ext^2(\CC^0_E,\CC^0_E)&\cong \begin{matrix}\bigl(\Ext^1(E,E)\otimes \Ho^1(\reg_X)\bigr) \oplus \bigl(\Hom(E,E)\otimes\wedge^2\Ho^1(\reg_X)\bigr)\\ \oplus\bigl(\Ho^1(E^\vee)\otimes \Ho^0(E)\otimes \Ho^1(\reg_X)\bigr)\oplus\bigl(\Ho^1(E^\vee)\otimes \Ho^1(E)\bigr)\,.\end{matrix}\label{eq:Ext2dec}
\end{align}
Under these isomorphisms, the component $\widehat q\colon \Ho^1(E^\vee)\otimes \Ho^0(E)\to \Ho^1(E^\vee)\otimes \Ho^0(E)\otimes \Ho^1(\reg_X)$ of $q$ is (up to a non-zero scalar multiple) given by
\begin{align}\label{eq:qformula}
 \widehat q(\theta\otimes s)= \theta\otimes s\otimes(\theta\circ s)\,.
\end{align}
The formula \eqref{eq:qformula} can be verified by a computation similar to the one in the proof of \autoref{lem:ddescription}. Alternatively, the reader may consult \cite[Prop.\ 4.2]{KruExt}. There, a general formula for the Yoneda product on tautological bundles on Hilbert schemes of points on \textit{surfaces} is given. Furthermore, in the surface case, there is an isomorphism $\Ext^*_{\sym_n}(\CC^0_E, \CC^0_F)\cong \Ext^*(E^{[n]}, F^{[n]})$ where the definition of $\CC^0_E$ as an $\sym_n$-equivariant bundle is the same as in the curve case: $\CC^0_E=\Ind_{\sym_{n-1}}^{\sym_n}\pr_1^*E$. Hence, \cite[Prop.\ 4.2]{KruExt} also provides a formula for the Yoneda product on $\CC^0_E$ and this formula has to be the same independently of the dimension of $X$.

By \autoref{lem:Etermsd>0}(iii), the edge morphism $\Ext_{\sym_n}^2(\CC^0_E,\CC^0_E)\cong\IE^1_{0,2}\to \IE_2\cong \Ext^2(E^{[n]},E^{[n]})$ is identified with the cokernel of $d\colon \IE^1_{-1,2}\to \IE^1_{0,2}$. Hence, considering \eqref{eq:cdYoneda}, it is enough to proof that \[                                                                                                                                                                                                                                                                                                                                                                                                                                                                                                                                                         \im(\widehat q)\not\subset \im(\tilde d)\quad,\quad \tilde d\colon \IE^1_{-1,2}\xrightarrow d \IE^1_{0,2}\xrightarrow{c}  \Ho^1(E^\vee)\otimes \Ho^0(E)\otimes \Ho^1(\reg_X) \]
where $c$ is the projection to the third direct summand of the decomposition \eqref{eq:Ext2dec}. By the description of the dual of $d\colon \IE^1_{-1,2}\to \IE^1_{0,2}$ in \autoref{lem:dd>0}, we have $\im(\tilde d)=\im(\widehat d)$, where $\widehat d=\tilde d_{\mid \Ho^0(E)\otimes \Ext^1(E,\omega^{-1})}$. Note that $\im(\widehat d)\subset   \Ho^1(E^\vee)\otimes \Ho^0(E)\otimes \Ho^1(\reg_X)$ is a linear subspace. Hence, it suffices to prove
\begin{align}\label{eq:goal}
\langle\im(\widehat q)\rangle\not\subset \im(\widehat d) 
\end{align}
where $\langle\im(\widehat q)\rangle$ is the linear hull. Again by \autoref{lem:dd>0}, we know that $\widehat d=\id_{\Ho^0(E)}\otimes u$ for some\footnote{Concretely, under the isomorphisms $\Ext^1(E,\omega^{-1})\cong \Ext^1(E\otimes\omega,\reg)$ and $\Ho^1(E^\vee)\otimes \Ho^1(\reg_X)\cong \Hom_{\IC}\bigl(\Ho^0(\omega),\Ho^1(E^\vee)\bigr)$, we have $u(\phi)(t)=t\circ(\id_E\otimes t)$, but this is not important for our argument.} linear map $u\colon \Ext^1(E,\omega^{-1})\to \Ho^1(E^\vee)\otimes \Ho^1(\reg_X)$. The equality $\widehat d=\id_{\Ho^0(E)}\otimes u$ is understood by permuting the first two tensor factors in the target space. By Serre duality and Riemann--Roch, 
\begin{align}\label{eq:dimu}
 \dim(\im u)\le\ext^1(E,\omega^{-1})=\ho^0(E\otimes \omega^2)=d+3r(g-1)\,.
\end{align}
We now choose $\mu\gg 0$ (hence $d\gg r$) such that
\begin{align}\label{eq:ineqd}
d+3r(g-1)<\bigl(d+(r-1)(g-1)\bigr)g
\end{align}
We fix some $0\neq s\in \Ho^0(E)$. Since $s\colon \reg_X\to E$ is injective, the induced map 
\[
s^*\colon\Ho^1(E^\vee)\to \Ho^1(\reg_X) \quad,\quad \theta\mapsto \theta\circ s  
\]
is surjective. Now, choose a basis \[\theta_1,\dots, \theta_g,\theta'_{g+1},\dots,\theta'_{d+r(g-1)}\] 
of $\Ho^1(E^\vee)$ with $\ker(s^*)=\langle \theta'_{g+1},\dots,\theta'_{d+r(g-1)}\rangle$, and set $t_i:=s^*\theta_i=\theta_i\circ s$. Then, by \eqref{eq:qformula}, we have 
\[
 \widehat q(\theta_i\otimes s)=\theta_i\otimes s\otimes t_i\quad,\quad \widehat q((\theta_i+\theta'_j)\otimes s)=(\theta_i+\theta_j')\otimes s\otimes t_i\,. 
\]
Since all of the above vectors are linearly independent for varying $i=1,\dots,g$ and $j=g+1,\dots ,d+r(g-1)$, we have 
\[
\dim\Bigl(\langle \im(\widehat q)\rangle\cap\bigl(\Ho^1(\theta)\otimes \langle s\rangle \otimes \Ho^1(\reg)\bigr)\Bigr)\ge \bigl(d+(r-1)(g-1)\bigr)g\,. 
\]
On the other hand, $\im(\widehat d)\cap \bigl(\Ho^1(\theta)\otimes \langle s\rangle \otimes \Ho^1(\reg)\bigr)= \langle s\rangle\otimes \im(u)$ where the equality has to be understood by switching the first two tensor factors. 
Hence, \eqref{eq:dimu} and  \eqref{eq:ineqd} imply \eqref{eq:goal}.

As the $\deg E<0$ case is similar, we will keep the explanation a bit shorter. In this case, we consider the component
$\check q\colon \Ho^0(E^\vee)\otimes \Ho^1(E)\to \Ho^0(E^\vee)\otimes \Ho^1(E)\otimes \Ho^1(\reg_X)$ of the Yoneda square $q\colon \Ext^1(\CC^0_E,\CC_E^0)\to \Ext^2(\CC^0_E,\CC_E^0)$. We still have the formula
\begin{equation}\label{eq:qformula2}
 \check q(\theta\otimes s)=\theta\otimes s\otimes (\theta\circ s)\,,
\end{equation}
only that the classes $\theta$ and $s$ are now of different cohomological degree than in \eqref{eq:qformula}. By \autoref{lem:ddescription<0}, the image of the differential $d\colon \IE^1_{-1,2}\to \IE^1_{0,2}$ followed by the projection to  
$\Ho^0(E^\vee)\otimes \Ho^1(E)\otimes \Ho^1(\reg_X)$ equals the image of the component
\[
 \check d\colon \Ho^1(E)\otimes \Hom(E,\omega^{-1}) \to \Ho^0(E^\vee)\otimes \Ho^1(E)\otimes \Ho^1(\reg_X)
\]
Because of the vanishing in \autoref{lem:Einfty<0}(iii), it is again sufficient to prove that 
 $\langle \im(\check q)\rangle \not\subset \im(\check d)$.  Again by \autoref{lem:ddescription<0}, we have $\check d=\id_{\Ho^1(E)}\otimes v$ for some linear map \[v\colon \Hom(E,\omega^{-1})\to \Ho^0(E^\vee)\otimes \Ho^1(\reg_X)\,.\] This means that, for $\theta\in \Ho^0(E^\vee)$, $s\in \Ho^1(E)$, and $t\in \Ho^1(\reg)$, we have
 \begin{equation}\label{eq:implication}
 \theta\otimes s\otimes t \in \im(\check d)\quad\implies \quad \theta\otimes t\in \im(v)\,. 
 \end{equation}
Let $0\neq \theta\colon E\to \reg$. We claim that $\theta_*\colon \Ho^1(E)\to \Ho^1(\reg)$, $s\mapsto \theta\circ s$ is surjective. Write $K:=\ker(\theta)$. Since $\theta$ factors as $E\to E/K\xrightarrow{\overline \theta}\reg$, the push-forward $\theta_*$ on $\Ho^1$ factors as 
\begin{align}\label{eq:thetafact}
\Ho^1(E)\to \Ho^1(E/K)\xrightarrow{\overline\theta_*}\Ho^1(\reg)  
\end{align}
The first map $\Ho^1(E)\to \Ho^1(E/K)$ is surjective since $\Ho^2(K)=0$. Since $\coker(\overline \theta)$ has zero-dimensional support, we have $\Ho^1(\coker(\overline \theta))=0$ and the second map $\overline\theta_*$ is surjective too. 

We now assume for a contradiction that $\langle \im(\check q)\rangle \subset \im(\check d)$.
The surjectivity of $\theta_*$ (for every $\theta\neq 0$), together with \eqref{eq:qformula2} and \eqref{eq:implication} imply that every pure tensor $\theta\otimes t\in \Ho^0(E^\vee)\otimes \Ho^1(\reg)$ is contained in the linear subspace $\im(v)$. It follows that $v\colon \Hom(E,\omega^{-1})\to \Ho^0(E^\vee)\otimes \Ho^1(\reg_X)$ is surjective. But this cannot hold as $\hom(E,\omega^{-1})<\ho^0(E^\vee)\otimes \ho^1(\reg)$.
\end{proof}

\section{The genus 1 case}

In this section, let $X$ be curve of genus $g=1$.

\begin{lemma}\label{lem:ellipticvanish}
 Let $\reg_X\neq E\in \VB(X)$ be a simple vector bundle. Then 
\[\Ho^0(E)\otimes\Ho^0(E^\vee)=0=\Ho^1(E)\otimes\Ho^1(E^\vee)\,.\]  
\end{lemma}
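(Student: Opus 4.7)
The plan is to reduce both vanishings to a single one via Serre duality, and then invoke Atiyah's classification of simple bundles on elliptic curves.

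First, since $g=1$, we have $\omega_X \cong \reg_X$, so Serre duality on $X$ gives canonical isomorphisms
\[
\Ho^1(E) \cong \Ho^0(E^\vee)^\vee, \qquad \Ho^1(E^\vee) \cong \Ho^0(E)^\vee.
\]
Consequently $\Ho^1(E)\otimes \Ho^1(E^\vee) \cong \bigl(\Ho^0(E)\otimes \Ho^0(E^\vee)\bigr)^\vee$, and the two asserted vanishings are equivalent. It therefore suffices to show $\Ho^0(E)\otimes \Ho^0(E^\vee)=0$, i.e.\ that at least one of $\Ho^0(E)$, $\Ho^0(E^\vee)$ vanishes.

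Next, I would invoke Atiyah's classification of bundles on an elliptic curve: a simple bundle $E$ of rank $r$ and degree $d$ satisfies $\gcd(r,d)=1$, and simple is equivalent to stable in this setting. I would then do a case distinction on $d=\deg(E)$. If $d>0$, then $E^\vee$ is stable of negative slope, so any nonzero section $\reg_X\to E^\vee$ would contradict stability (as $\mu(\reg_X)=0>\mu(E^\vee)$); hence $\Ho^0(E^\vee)=0$. Symmetrically, if $d<0$, then $\Ho^0(E)=0$. If $d=0$, then $\gcd(r,0)=1$ forces $r=1$, so $E=L$ is a degree-$0$ line bundle; since $L\neq \reg_X$, we have $\Ho^0(L)=0$.

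In each case one of the two cohomology groups vanishes, giving $\Ho^0(E)\otimes \Ho^0(E^\vee)=0$, and hence by the Serre duality reduction also $\Ho^1(E)\otimes \Ho^1(E^\vee)=0$. The only potential obstacle is checking that simple bundles on an elliptic curve are stable with $\gcd(r,d)=1$, but this is the classical result of Atiyah and can be cited directly.
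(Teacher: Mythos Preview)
Your argument is correct, but it takes a longer route than the paper's. The paper's proof is a two-line observation: if both $\Ho^0(E)$ and $\Ho^0(E^\vee)$ were nonzero, pick $0\neq s\colon \reg_X\to E$ and $0\neq \theta\colon E\to \reg_X$; then $s\circ\theta\in\End(E)$ is nonzero (since $s$ is injective and $\theta\neq 0$) but factors through the rank-one bundle $\reg_X$, so it cannot be a nonzero scalar multiple of $\id_E$ unless $\rank E=1$, in which case $E\cong\reg_X$, contrary to hypothesis. The $\Ho^1$ statement follows by Serre duality, exactly as you do.

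The difference is that the paper uses only the \emph{definition} of simpleness, while you invoke Atiyah's classification to upgrade ``simple'' to ``stable with $\gcd(r,d)=1$'' and then argue via slopes. Your approach works, but Atiyah's theorem is a substantial input for what is really an elementary fact: the $\Ho^0$ vanishing holds for any simple bundle $E\neq\reg_X$ on any smooth projective curve, not just in genus~1. What your case distinction buys is perhaps a cleaner bookkeeping in the rank-one case, but at the cost of importing a deep structural result where none is needed.
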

\begin{proof}
If $\Ho^0(E)\neq 0\neq \Ho^0(E^{\vee})$, we would have a non-trivial endomorphism of $E$. The statement for $\Ho^1$ follows by Serre duality.
\end{proof}

Let us describe the leftmost differentials of the spectral sequence of \autoref{prop:spectralseq} for $E=F$ a simple bundle and $g=1$.

\begin{lemma}\label{lem:ddg1}
Let $\reg_X\neq E\in \VB(X)$ be a simple vector bundle. Then, for every $p\ge 1$ the dual of the differential $\IE^1_{-p,p}\to \IE^1_{-p+1,p}$ is isomorphic to 
\[
 (\cup\,,\,\id\,,\, \cup\,,\,\cup)\colon \begin{matrix} \bigl(\Hom(E,E)\otimes \Ho^1(\reg)\bigr)\oplus\Ext^1(E,E)\\\oplus \bigl(\Ho^0(E)\otimes \Ho^1(E^\vee)\bigr)\oplus \bigl(\Ho^1(E)\otimes \Ho^0(E^\vee)\bigr)\end{matrix}   \to \Ext^1(E,E)\,.
\]
In particular, the first two components are isomorphisms between one-dimensional vector spaces.  
\end{lemma}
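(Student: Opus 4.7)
The plan is to follow the template of \autoref{lem:dd>0} and \autoref{lem:ddescription<0}, specialized to the genus-one setting where $\omega_X\cong \reg_X$. First I would apply \autoref{lem:dual} with $\omega=\reg$ to rewrite the Serre dual of $\IE^1_{-p+1,p}=\Ext^p_{\sym_n}(\CC^{p-1}_E,\CC^0_E)$, in the relevant cohomological degree $n-p$, as a subsum of pieces built from $\Ext^*(E,E)$, $\Ext^*(E,\reg)=\Ho^*(E^\vee)$, $\Ho^*(E)$ and $\wedge^k\Ho^*(\reg)$; similarly for $\IE^1_{-p,p}$. Since $\Ho^*(\reg)$ is supported in degrees $0$ and $1$ each one-dimensional, $\wedge^k\Ho^*(\reg)$ is one-dimensional in each of degrees $k-1$ and $k$ and zero otherwise, which pins down exactly which combinations of $\Ext^*$ and $\Ho^*$ factors can contribute in degree $n-p$. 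Invoking \autoref{lem:ellipticvanish} to kill the products $\Ho^0(E)\otimes\Ho^0(E^\vee)$ and $\Ho^1(E)\otimes\Ho^1(E^\vee)$, together with the vanishing $\Ext^2(E,E)=0$ on the curve, leaves precisely the four summands in the statement for the source and only $\Ext^1(E,E)$ for the target.

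Next I would read off the map via \autoref{lem:ddescription}, applied with $p$ there replaced by $p-1$. The block $A$ acts on the first two source summands. On $\Hom(E,E)\otimes\Ho^1(\reg)$, whose generator may be taken to be $\id_E\otimes(t\wedge\dots\wedge t)$, the only term of the sum in $A$ not forced to vanish by $\Ext^2(E,E)=0$ yields, up to scalar, the cup product $\id_E\cup t\in\Ext^1(E,E)$. On $\Ext^1(E,E)$, whose generator corresponds to $\phi\otimes(1\wedge t\wedge\dots\wedge t)$, the same sum collapses to the single index where $t_i=1$ (the other indices contributing $\phi\cup t\in\Ext^2(E,E)=0$), giving the identity on $\Ext^1(E,E)$. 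The block $B$ acts on the last two source summands as the Yoneda products $\Ho^0(E^\vee)\otimes\Ho^1(E)\to\Ext^1(E,E)$ and $\Ho^1(E^\vee)\otimes\Ho^0(E)\to\Ext^1(E,E)$. The block $D$ lands in the second-summand part of the target, which vanishes in degree $n-p$ by the same analysis, so it contributes nothing. The nonzero scalar prefactors of \autoref{lem:ddescription} are absorbed into the vertical isomorphisms, leaving the matrix $(\cup,\id,\cup,\cup)$ claimed.

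For the concluding assertion, simpleness gives $\Hom(E,E)\cong\IC$, and Serre duality on the elliptic curve yields $\Ext^1(E,E)\cong\Hom(E,E)^\vee\cong\IC$, so both $\Hom(E,E)\otimes\Ho^1(\reg)$ and $\Ext^1(E,E)$ are one-dimensional. The identity on $\Ext^1(E,E)$ is trivially an isomorphism. For the cup product $\id_E\cup t$, the short exact sequence $0\to\reg\xrightarrow{\id_E}\sEnd(E)\to\sEnd(E)/\reg\to 0$ together with $\Ho^0(\sEnd(E)/\reg)=0$, which follows from simpleness, forces $\Ho^1(\reg)\hookrightarrow\Ho^1(\sEnd(E))=\Ext^1(E,E)$ to be injective, so $\id_E\cup t$ generates the target. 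The only real obstacle is the bookkeeping of wedge-power generators and scalar factors when invoking \autoref{lem:ddescription}, which is a routine exercise once one notices that each relevant degree slot of $\wedge^k\Ho^*(\reg)$ is one-dimensional.
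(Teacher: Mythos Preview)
Your proposal is correct and follows essentially the same approach as the paper: specialise \autoref{lem:dual} and \autoref{lem:ddescription} to $\omega\cong\reg$, use \autoref{lem:ellipticvanish} to eliminate the spurious summands, and absorb scalars into the vertical isomorphisms, exactly as in \autoref{lem:dd>0}. Your explicit justification of the ``in particular'' clause via the short exact sequence $0\to\reg\to\sEnd(E)\to\sEnd(E)/\reg\to 0$ goes slightly beyond what the paper spells out, but is a natural and correct way to see that $\id_E\cup t\neq 0$.
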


\begin{proof}
This is found to be a special case of \autoref{lem:ddescription}, analogously to the proof of \autoref{lem:dd>0}.
\autoref{lem:ellipticvanish} is used for the vanishing of certain direct summands of $(\IE^1_{-p,p})^\vee$ and $(\IE^1_{-p+1,p})^\vee$. Also note that we again hide the one-dimensional vector space $\Ho^1(\omega)$ and its symmetric powers from the notation. 
\end{proof}

\begin{cor}\label{cor:Evanish}
For $\reg_X\neq E\in \VB(X)$ simple, we have $\IE^2_{-p,p}=0$ for all $p=1,\dots, n-1$.
\end{cor}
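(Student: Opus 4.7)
The plan is to prove the stronger statement that the differential $d^1\colon \IE^1_{-p,p}\to \IE^1_{-p+1,p}$ on the first page of the spectral sequence is already injective. Since $\IE^2_{-p,p}$ equals $\ker(d^1)$ modulo the image of the preceding differential $\IE^1_{-p-1,p}\to \IE^1_{-p,p}$, injectivity of $d^1$ at this spot is more than enough to force $\IE^2_{-p,p}=0$.

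To verify this injectivity I would pass to the dual via equivariant Serre duality: $d^1$ is injective if and only if its dual $(d^1)^\vee\colon (\IE^1_{-p+1,p})^\vee \to (\IE^1_{-p,p})^\vee$ is surjective. The reason to dualise is that \autoref{lem:ddg1} provides a completely explicit formula for $(d^1)^\vee$ as a map of the shape $(\cup,\id,\cup,\cup)$ from a four-term direct sum into $\Ext^1(E,E)$, and explicitly notes that the second component is an isomorphism between one-dimensional vector spaces.

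From there the rest is immediate: the $\id$-component of $(d^1)^\vee$ already maps one direct summand of the domain isomorphically onto the one-dimensional target $\Ext^1(E,E)$, so $(d^1)^\vee$ is surjective, whence $d^1$ is injective and $\IE^2_{-p,p}=0$ for every $p=1,\dots,n-1$. There is no real obstacle here: all of the content is already packaged in \autoref{lem:ddg1}, whose proof in turn relies on the general description of the $\IE^1$-differentials in \autoref{lem:ddescription} together with the vanishings $\Ho^0(E)\otimes\Ho^0(E^\vee)=0=\Ho^1(E)\otimes\Ho^1(E^\vee)$ of \autoref{lem:ellipticvanish}.
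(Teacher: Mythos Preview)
Your proposal is correct and follows essentially the same line as the paper: show that the dual of $d^1\colon\IE^1_{-p,p}\to\IE^1_{-p+1,p}$ is surjective (using the description in \autoref{lem:ddg1}, in particular that one component is an isomorphism onto the one-dimensional target $\Ext^1(E,E)$), hence $d^1$ is injective and $\IE^2_{-p,p}=0$. The paper's proof is just a terser version of exactly this argument.
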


\begin{proof}
By definition $\IE^2_{-p,p}$ is the kernel of the differential $\IE^1_{-p,p}\to \IE^1_{-p+1,p}$. By \autoref{lem:ddg1}, this differential is injective as its dual is surjective. 
\end{proof}

\begin{theorem}\label{thm:simpleg1}
 Let $\reg_X\neq E\in \VB(X)$ be simple. Then $E^{[n]}$ is again simple for all $n\in \IN$. 
\end{theorem}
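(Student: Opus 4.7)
The plan is to exploit the spectral sequence of Proposition~\ref{prop:spectralseq} with $F=E$, in direct parallel to the proof of Theorem~\ref{thm:simpleg2}, only now using the vanishing supplied by Corollary~\ref{cor:Evanish} in place of Lemma~\ref{lem:extvanish}. The goal is to show $\dim\Hom(E^{[n]},E^{[n]})=1$, since $\id_{E^{[n]}}$ is a nonzero endomorphism and simplicity then follows.

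First, the spectral sequence gives a finite filtration on $\Hom(E^{[n]},E^{[n]})=\IE^0$ whose associated graded pieces are the terms $\IE^\infty_{-p,p}$ for $p=0,1,\dots,n-1$. By Corollary~\ref{cor:Evanish}, we have $\IE^2_{-p,p}=0$ for every $p=1,\dots,n-1$, so a fortiori $\IE^\infty_{-p,p}=0$ for these indices. Consequently, $\Hom(E^{[n]},E^{[n]})\cong \IE^\infty_{0,0}$.

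Next, I will compute $\IE^1_{0,0}=\Hom_{\sym_n}(\CC^0_E,\CC^0_E)$ using the $p=0$ case of Lemma~\ref{lem:extterms}. Taking the degree-zero part yields
\[
\Hom_{\sym_n}(\CC^0_E,\CC^0_E)\cong \Hom(E,E)\oplus\bigl(\Ho^0(E^\vee)\otimes\Ho^0(E)\bigr).
\]
The second summand vanishes by Lemma~\ref{lem:ellipticvanish} since $E\neq\reg_X$ is simple. Simplicity of $E$ then gives $\IE^1_{0,0}\cong\Hom(E,E)\cong\IC$.

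Since $\IE^\infty_{0,0}$ is a subquotient of the one-dimensional space $\IE^1_{0,0}$, it is at most one-dimensional. To see it is exactly one-dimensional, I will use that, by Proposition~\ref{prop:spectralseq}, the edge morphism $\IE^1_{0,0}\twoheadrightarrow\IE^\infty_{0,0}\hookrightarrow\Hom(E^{[n]},E^{[n]})$ equals $\widehat\beta\circ\pi_*^{\sym_n}$; composed with the inclusion $\Hom(E,E)\hookrightarrow\IE^1_{0,0}$ coming from $\CC$, this is precisely the functorial map $(\_)^{[n]}\colon\Hom(E,E)\to\Hom(E^{[n]},E^{[n]})$, which sends $\id_E\mapsto\id_{E^{[n]}}\neq 0$. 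Hence the composition is nonzero, forcing both $\IE^\infty_{0,0}\cong\IC$ and the desired identity $\Hom(E^{[n]},E^{[n]})\cong\IC$. There is no serious obstacle to this argument: all the real work is packaged into Lemma~\ref{lem:ddg1} and Corollary~\ref{cor:Evanish}, and the remaining step is a routine unpacking of the spectral sequence together with the elementary observation that the edge map is nonzero on the identity.
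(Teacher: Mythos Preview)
Your proof is correct and follows essentially the same approach as the paper: use \autoref{cor:Evanish} to kill the contributions $\IE^\infty_{-p,p}$ for $p\ge 1$, then identify $\IE^1_{0,0}\cong\Hom(E,E)$ via \autoref{lem:extterms} and \autoref{lem:ellipticvanish}. The only minor difference is in the last step: the paper directly asserts $\IE^\infty_{0,0}\cong\IE^1_{0,0}$, which holds because $\IE^1_{-p,q}=0$ for $q<p$ (immediate from \autoref{lem:extterms}) so no differential can hit $\IE^r_{0,0}$; you instead bound $\dim\IE^\infty_{0,0}\le 1$ and then use the edge morphism on $\id_E$ to get the reverse inequality, which is a perfectly valid alternative.
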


\begin{proof}
By \autoref{cor:Evanish}, we have $\Hom(E^{[n]},E^{[n]})\cong \IE_0\cong \IE^{\infty}_{0,0}\cong \IE^{1}_{0,0}$ which by \autoref{lem:dual} and \autoref{lem:ellipticvanish} is
$\IE^1_{0,0}\cong \Hom(E,E)$.
\end{proof}

\begin{remark}
 Independently of the genus of $X$, the tautological bundle $\reg_X^{[n]}$ is never simple as it has the structure sheaf $\reg_{X^{(n)}}$ as a direct summand. This is because $\pr_{X^{(n)}}\colon \Xi\to X^{(n)}$ is flat and finite, hence the pull-back of regular functions $\reg_{X^{(n)}}\to \pr_{X^{(n)}*}\reg_\Xi\cong \reg_X^{[n]}$ has $\frac 1n$ times the trace map as a left-inverse.
\end{remark}

\begin{remark}
 Note that simplicity is not preserved for genus $g=0$. For example, $\reg_{\IP}(1)^{[2]}\cong \reg_{\IP^2}\oplus\reg_{\IP^2}$; see \cite[Sect.\ 3]{Nagaraj--sym} or \cite[Sect. 6.1]{Krug--remarksMcKay}.
\end{remark}

\begin{prop}\label{prop:Extinjg1}
 Let $E\neq \reg_X\in \VB(X)$ be a simple vector bundle. Then 
\[(\_)^{[n]}\colon \Ext^1(E,E)\to \Ext^1(E^{[n]},E^{[n]})\] is injective for all $n\in \IN$.
\end{prop}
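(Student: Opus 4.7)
The plan is to invoke \autoref{rem:remsuffice}, which reduces the statement to showing that for every nonzero $\phi \in \Ext^1(E,E)$ the class $\CC(\phi) = (\phi \otimes \id^{n-1}, 0) \in \IE^1_{0,1}$ survives as a nonzero element of $\IE^\infty_{0,1}$.

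First I would argue that all higher differentials landing in $\IE^r_{0, 1}$ vanish. By \autoref{cor:Evanish}, $\IE^2_{-r, r} = 0$ for every $r \geq 1$, and since $\IE^r_{-r, r}$ is a subquotient of $\IE^2_{-r, r}$, the groups $\IE^r_{-r, r}$ vanish for all $r \geq 2$ as well. Together with the obvious vanishing $\IE^1_{1, 1} = 0$ (as $\CC^{-1}_E = 0$), this yields $\IE^\infty_{0, 1} = \IE^2_{0, 1} = \coker\bigl(d_1 \colon \IE^1_{-1, 1} \to \IE^1_{0, 1}\bigr)$, so the problem reduces to verifying $\CC(\phi) \notin \im(d_1)$.

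Next I would read off the relevant piece of $d_1$ from \autoref{lem:ddg1} taken with $p = 1$: the dual map $d_1^\vee$ has the shape $(\cup, \id, \cup, \cup)$, and in particular its restriction to the $\Ext^1(E, E)$ summand of $(\IE^1_{0, 1})^\vee$ is the identity onto the one-dimensional target $(\IE^1_{-1, 1})^\vee \cong \Ext^1(E, E)$. Matching summands through the Serre-duality identifications of \autoref{lem:dual} and \autoref{lem:extterms}, this $\Ext^1(E, E)$ summand of $(\IE^1_{0, 1})^\vee$ is Serre-dual to the $\Hom(E, E) \otimes \Ho^1(\reg_X)$ summand of $\IE^1_{0, 1}$. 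Transposing the identity component therefore shows that the composition
\[
\IE^1_{-1, 1} \xrightarrow{d_1} \IE^1_{0, 1} \twoheadrightarrow \Hom(E, E) \otimes \Ho^1(\reg_X)
\]
is an isomorphism of one-dimensional spaces; in particular, $d_1(\id_E)$ has nonzero $\Hom(E, E) \otimes \Ho^1(\reg_X)$-component.

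To conclude, I observe that $\CC(\phi)$ lies entirely in the $\Ext^1(E, E)$ summand of $\IE^1_{0, 1}$, and hence has zero $\Hom(E, E) \otimes \Ho^1(\reg_X)$-component. Any hypothetical relation $\CC(\phi) = \lambda\, d_1(\id_E)$ would then, upon projection to that latter summand, force $\lambda = 0$ and therefore $\phi = 0$. The delicate step I anticipate is the summand-by-summand bookkeeping under Serre duality that lines up the identity component of $d_1^\vee$ with the correct projection of $\IE^1_{0, 1}$; once that matching is pinned down, the conclusion is essentially automatic.
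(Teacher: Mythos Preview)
Your proposal is correct and follows essentially the same route as the paper: reduce via \autoref{rem:remsuffice} and \autoref{cor:Evanish} to showing $\CC(\phi)\notin\im(d_1\colon\IE^1_{-1,1}\to\IE^1_{0,1})$, then use \autoref{lem:ddg1} to see that $d_1$ hits the $\Hom(E,E)\otimes\Ho^1(\reg_X)$ summand isomorphically while $\CC(\phi)$ lives purely in the $\Ext^1(E,E)$ summand. The only difference is cosmetic: the paper states directly that $d_1$ maps the one-dimensional $\IE^1_{-1,1}$ isomorphically to \emph{both} first two summands of \eqref{eq:ext1decomp}, whereas you extract only the second component via the Serre-duality bookkeeping you flag as delicate---but one component suffices, and your identification is correct.
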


\begin{proof}
 By \autoref{rem:remsuffice} and \autoref{cor:Evanish}, it is enough to prove that, for every $\psi\in \Ext^1(E,E)$, the class $\CC(\psi)$ is not contained in the image of the differential $\IE^1_{-1,1}\to \IE^1_{0,1}$. Under the isomorphism
 \begin{equation}\label{eq:ext1decomp}
  \Ext^1(\CC(E),\CC(E))\cong \begin{matrix}\Ext^1(E,E)\oplus\bigl(\Hom(E,E)\otimes \Ho^1(\reg)\bigr)\\\oplus \bigl(\Ho^1(E^\vee)\otimes \Ho^0(E)\bigr)\oplus \bigl(\Ho^0(E^\vee)\otimes \Ho^1(E)\bigr)\end{matrix}
 \end{equation}
we have $\CC(\psi)=(\psi,0,0,0)$. But, by \autoref{lem:ddg1}, the differential maps $\IE^1_{-1,1}\cong \Hom(E,E)$ isomorphically to \textit{both} of the first two summands of \eqref{eq:ext1decomp}. Hence, an element of the form $(\psi,0,0,0)$, with a zero in the second component but a non-zero first component, can not be in the image of the differential.   
\end{proof}

As explained in \autoref{rem:mainsuffice}, this finishes the proof of \autoref{thm:mainclosed}.

\bibliographystyle{alpha}
\addcontentsline{toc}{chapter}{References}
\bibliography{references}

\end{document}